\documentclass[a4paper]{amsart}

\tolerance=10000

\usepackage[utf8x]{inputenc}

\usepackage{amsmath,amssymb,enumerate,amsthm,url,graphicx}
\usepackage{amsrefs}
\usepackage[pagewise,displaymath]{lineno}

\usepackage{wasysym}                            

\newcommand{\UP}{\blacktriangle}                
\newcommand{\DOWN}{\blacktriangledown}          
\newcommand{\Up}{{\vartriangle}}                
\newcommand{\Down}{\triangledown}               

\newcommand{\LEFT}{\LHD}
\newcommand{\RIGHT}{\RHD}
\newcommand{\Left}{\lhd}
\newcommand{\Right}{\rhd}

\newtheorem{theorem}{Theorem}[section]
\newtheorem{lemma}[theorem]{Lemma}
\newtheorem{corollary}[theorem]{Corollary}
\newtheorem{proposition}[theorem]{Proposition}

\theoremstyle{definition}
\newtheorem{remark}[theorem]{Remark}
\newtheorem{example}[theorem]{Example}

\newcommand{\TABS}{%
\quad\=\qquad\qquad\qquad\qquad\qquad\qquad\qquad\qquad\= \kill}


\title[Intuitionistic Logic with two Galois Connections]%
{Intuitionistic logic with two Galois connections combined with Fischer Servi axioms}

\author{Wojciech Dzik}
\address{{Wojciech Dzik},~Institute of  Mathematics, University of Silesia, ul.~Bankowa~12, \mbox{40-007 Katowice}, Poland}
\email{dzikw@silesia.top.pl}

\author{Jouni J{\"a}rvinen}
\address{{Jouni J{\"a}rvinen},~Turku, Finland}
\email{Jouni.Kalervo.Jarvinen@gmail.com}

\author{Michiro Kondo}
\address{{Michiro Kondo},~School of Information Environment, Tokyo Denki University, Inzai, 270-1382, Japan}
\email{kondo@sie.dendai.ac.jp}

\date{\today}

\begin{document}

\maketitle

\begin{abstract}
Earlier, the authors introduced the logic IntGC, which is an extension of intuitionistic 
propositional logic by  two rules of inference mimicking the performance of Galois connections 
(Logic J. of the IGPL, 18:837-858, 2010).
In this paper, the extensions Int2GC and Int2GC+FS of IntGC are studied. Int2GC can be seen
as a fusion of two IntGC logics, and Int2GC+FS is obtained from Int2GC by adding instances of duality-like connections 
$\Diamond( A \to\ B) \to (\Box A \to \Diamond B)$ and $(\Diamond A \to \Box B) \to \Box(A \to B)$,
introduced by G.~Fischer Servi (Rend. Sem. Mat. Univers. Politecn. Torino, 42:179-194, 1984),
for interlinking the two Galois connections of Int2GC. 
Both Kripke-style and algebraic semantics are presented for Int2GC and Int2GC+FS, 
and the logics are proved to be complete with respect to both of these semantics.
We show that rough lattice-valued fuzzy sets defined on complete Heyting algebras are proper algebraic 
models for Int2GC+FS. We also prove that Int2GC+FS is equivalent to the intuitionistic tense logic IK$_t$, and
an axiomatisation of IK$_t$ with the number of axioms reduced to the half 
of the number of axioms given by W.~B.~Ewald (J. Symb. Log, 51:166–179, 1986) is presented.
\end{abstract}

\smallskip
\noindent{\small\textit{Key words and phrases:\/} intuitionistic logic, Galois connections, information logic, 
rough lattice-valued fuzzy sets on complete Heyting algebras,  
Kripke semantics, intuitionistic tense logic, completeness theorems.}

\section{Introduction and Motivation} \label{Section:Intro}

In \cite{JaKoKo08}, Information Logic of Galois Connections ({\sf ILGC}) 
was introduced as classical propositional logic with a pair 
of unary connectives $\UP$ and $\Down$ mimicking a Galois connection. 
Motivation for {\sf ILGC} originates in \emph{rough set theory}
\cite{Pawl82}, where it is assumed that our knowledge about 
objects of a universe of discourse $U$ is expressed by an 
\emph{information relation} $R$. 
An information relation may reflect similarity or difference
between objects. For instance, $R$ can be defined on the
set of all human beings in such a way that two persons are 
$R$-related if they are of the same gender and the difference of
their ages is less than a year. Originally, Pawlak assumed information
relations to be equivalences (reflexive, symmetric, and
transitive binary relations), so called \emph{indiscernibility
relations}, but in the literature can be found numerous
studies considering information relations of different
type; see \cite{DemOrl02}, for example.

In terms of an information relation $R$, we may define
the \emph{upper approximation\/} of a set $X \subseteq U$ as
\[
X^\UP = \{ x \in U \mid (\exists y \in U)\, x R y \ \& \ y \in X \},
\]
and the \textit{lower approximation\/} of $X$ is
\[
X^\DOWN = \{x \in U \mid (\forall y \in U)\, x R y \Rightarrow y \in X \}.
\]
For instance, if $R$ is the information relation considered above,
then $x \in X^\DOWN$ if all the persons that are coarsely of the same age
and are of the same gender as $x$ belong to $X$, and $x \in X^\UP$ if there
exists at least one such person. Therefore, $^\DOWN$ 
may be interpreted to represent certainty and $^\UP$
possibility with respect to knowledge expressed by 
the relation $R$.

We may also define another pair of mappings $\wp(U) \to \wp(U)$ 
by reversing the relation $R$. For any set $X \subseteq U$, 
let us define
\[
X^\Up = \{x \in U \mid (\exists y \in U)\, y R x \ \& \ y \in X \}
\]
and 
\[
X^\Down = \{x \in U \mid (\forall y \in U)\, y R x \Rightarrow y \in X \}.
\]
It is well-known that for any binary relation, the pairs 
$(^\UP,^\Down\!)$ and $(^\Up,^\DOWN\!)$ are order-preserving Galois connections
$\wp(U) \to \wp(U)$.

The logic {\sf ILGC} was defined by adding to classical propositional logic 
two rules of inference:\medskip
\begin{tabbing}
\TABS 
\>(GC\,${\Down}{\UP}$) \ $\displaystyle \frac{A \to \Down B}{\UP A \to B}$ 
\>(GC\,${\UP}{\Down}$) \ $\displaystyle \frac{\UP A \to B}{A \to \Down B}$ 
\end{tabbing} \medskip
Another pair of connectives is introduced by De~Morgan-type assertions: 
\begin{equation}\tag{$\star$}\label{Eq:dual}
\Up A = \neg \Down \neg A \mbox{\quad and \quad} \DOWN A = \neg \UP \neg A,
\end{equation}
For $\Up $ and $\DOWN $ the following rules are admissible in ILGC:\medskip
\begin{tabbing}
\TABS
\>(GC\,${\DOWN}{\Up}$) \ $\displaystyle \frac{A \to \DOWN B}{\Up A \to B}$ 
\>(GC\,${\Up}{ \DOWN}$) \ $\displaystyle \frac{\Up A \to B}{A \to \DOWN B}$
\end{tabbing}\medskip
This means that in {\sf ILGC}, we get another Galois connection $({\Up},{\DOWN})$ ``for free''.

In \cite{DzJaKo10}, we introduced an intuitionistic propositional logic
with a Galois connection ({\sf IntGC}) and studied its main properties. In addition to the intuitionistic 
logic axioms and inference rule of Modus Ponens, {\sf IntGC} contains 
rules (GC\,${\Down}{\UP}$) and (GC\,${\UP}{\Down}$). Since the base logic is changed from 
classical to intuitionistic, the classical-type assertions \eqref{Eq:dual} 
can not be used to introduce another Galois connection. More precisely, if we define the operators $\Up$, $\DOWN$ 
from $\Down$, $\UP$ in terms of intuitionistic negation, the pair $({\Up},{\DOWN})$ 
does not form a Galois connection; see Lemma 3.3 in \cite{DzJaKo10}.
Therefore, to define an intuitionistic logic of two Galois connections, 
the other Galois connection must be declared by adding rules (GC\,${\DOWN}{\Up}$) and 
(GC\,${\Up}{\DOWN}$).

In Section~\ref{Sec:Axiomatizations}, we define two intuitionistic logics with two Galois connections.
The first one, called {\sf Int2GC}, is obtained by extending intuitionistic propositional 
logic with the connectives $\UP$, $\DOWN$, $\Up$, $\Down$ and by rules 
(GC\,${\Down}{\UP}$), (GC\,${\UP}{\Down}$), (GC\,${\DOWN}{\Up}$), (GC\,${\Up}{\DOWN}$).
In {\sf Int2GC}, the two Galois connections $({\UP},{\Down})$ and $({\Up},{\DOWN})$
are not connected with each other, and this means that {\sf Int2GC} is simply 
the fusion of two {\sf IntGC} logics, the first one having the operators $\UP$ and $\Down$, 
and the second has $\Up$ and $\DOWN$. 
The logic {\sf Int2GC+FS} is obtained by extending {\sf Int2GC} with instances
of the axioms $\Diamond( A \to\ B) \to (\Box A \to \Diamond B)$ and
$(\Diamond A \to \Box B) \to \Box(A \to B)$ introduced by
Fischer Servi \cite{FishServ84}. This means that {\sf Int2GC+FS} has
duality-like connections  $\UP(A \to B) \to (\DOWN A \to \UP B)$, 
$\Up(A \to B) \to (\Down A \to \Up B)$, $(\UP A \to \DOWN B) \to \DOWN(A \to B)$, 
and $(\Up A \to \Down B) \to \Down(A \to B)$. These axioms defining {\sf Int2GC+FS}
are referred to as (FS1), (FS2), (FS3), and (FS4), respectively. We show that  
in {\sf Int2GC}, axioms (FS1) and {\rm (FS4)} are equivalent, and the
same holds with (FS2) and (FS3). This implies that we have several
equivalent combinations of axioms to define {\sf Int2GC+FS}.

Section~\ref{Sect:H2GC-algebras} is devoted to H2GC- and H2GC+FS-algebras that are used for
defining algebraic semantics for {\sf Int2GC} and {\sf Int2GC+FS}, respectively. 
H2GC-algebras are Heyting algebras equipped with two order-preserving Galois connections 
$({^\RIGHT}, {^\Left})$ and $({^\Right},{^\LEFT})$, and H2GC+FS-algebras are H2GC-algebras
such that the operations ${^\RIGHT}$ and ${^\LEFT}$ are connected by an identity corresponding 
to axiom (FS1), and ${^\Right}$ and ${^\LEFT}$ are connected by an equation that corresponds (FS2).
In \cite{Dunn94}, J.~M.~Dunn studied distributive lattices with two operators $\Box$ and $\Diamond$. 
He introduced conditions ($\mathrm{D}_\wedge$)~~$\Diamond x \wedge \Box y \leq \Diamond( x \wedge y)$ \ and \  
($\mathrm{D}_\vee$)~~$\Box(x \vee y) \leq \Box x \vee \Diamond y$
for the interactions between $\Box$ and $\Diamond$. We show that H2GC+FS-algebras 
can be defined also as H2GC-algebras satisfying the identities corresponding ($\mathrm{D}_\wedge$),
that is, H2GC+FS-algebras are H2GC-algebras satisfying  $a^\RIGHT \wedge b^\LEFT \to ( a \wedge b)^\RIGHT = 1$ 
and $a^\Right \wedge b^\Left \to ( a \wedge b)^\Right = 1$. In Section~\ref{sSec:FuzzyMotivation},
we consider rough fuzzy sets defined on complete Heyting algebras, and show how in this setting
H2GC+FS-algebras arise naturally. Algebras of rough fuzzy sets
satisfy ($\mathrm{D}_\wedge$) when $\Diamond$ and $\Box$ are interpreted by
$^\RIGHT$ and $^\LEFT$ (or $^\Right$ and $^\Left$), but condition ($\mathrm{D}_\vee$) is not satisfied.
So, rough fuzzy sets are proper algebraic models for {\sf Int2GC+FS}. In Section~\ref{sSec:AlgCompleteness},
we introduce algebraic semantics for {\sf Int2GC} and {\sf Int2GC+FS}
with respect to H2GC- and H2GC+FS-algebras, respectively, and 
present algebraic completeness theorems.

In Section~\ref{Sec:Kripke}, Kripke-semantics for {\sf Int2GC} and {\sf Int2GC+FS}
are considered. We begin with recalling Kripke-frames and completeness
for {\sf IntGC} from \cite{DzJaKo10} in Section~\ref{sSec:Frames}. 
In addition, we introduce Kripke-frames and semantics for {\sf Int2GC}
and {\sf Int2GC+FS}, and soundness of both  {\sf Int2GC} and {\sf Int2GC+FS} is proved. 
Canonical frames of H2GC-algebras are introduced and Kripke-completeness is proved. 
Section~\ref{sSec:Frames} ends by an example in which particular Kripke-frames for 
{\sf Int2GC+FS} are defined in terms of preference relations. In Section~\ref{sSec:Canonical}, 
we define canonical frames of H2GC+FS-algebras, and Kripke-completeness of {\sf Int2GC+FS}
is proved by applying canonical frames and algebraic completeness result of {\sf Int2GC+FS}.

It is proved in \cite{JaKoKo08} that {\sf ILGC} is equivalent, with respect to provability, 
to the minimal (classical) tense logic {\sf K$_t$}, that is, {\sf ILGC} can be viewed as a simple formulation of 
{\sf K$_t$}. In Section~\ref{Sec:IKt}, we prove that intuitionistic tense logic {\sf IK$_t$},
introduced by Ewald \cite{Ewald86}, is equivalent syntactically to {\sf Int2GC+FS} when 
$\UP$, $\DOWN$, $\Up$, $\Down$ are identified with tense operators $F$, $G$, $P$, $H$, respectively.
In other words, in {\sf IK$_t$} and {\sf Int2GC+FS} exactly the same formulas can be proved. 
This then means that {\sf Int2GC+FS} can be seen as an alternative formulation of {\sf IK$_t$}.
In addition, we give an axiomatisation of {\sf IK$_t$} with the number of axioms reduced to half 
of the number of axioms of {\sf IK$_t$} (with the same rules) given by  Ewald \cite{Ewald86}, and we present
another definition of {\sf Int2GC} using only axioms of Ewald and rules admissible in {\sf IK$t$}.

\section{Intuitionistic logics with Galois connections and Fischer Servi axioms}
\label{Sec:Axiomatizations}

In this section we introduce two  modal logics {\sf Int2GC} and {\sf Int2GC+FS} based on 
\emph{intuitionistic propositional logic} \cites{Dalen01,RasSik68}.
We begin with recalling the intuitionistic propositional logic with a Galois connection ({\sf IntGC})
defined by the authors in \cite{DzJaKo10}. The language of {\sf IntGC} is constructed from an enumerable
infinite set of propositional variables $\mathrm{Var}$, the connectives $\neg$, $\vee$, $\wedge$, $\to$,
and the unary operators $\UP$ and $\Down$. The constant \emph{true} is defined by setting
$\top := p \to p$ for some fixed propositional variable $p \in \mathrm{Var}$, and the constant 
\emph{false} is defined by $\bot := \neg \top$. We also set $A \leftrightarrow B := (A \to B) \wedge (B \to A)$.
The logic {\sf IntGC} is the smallest logic that contains intuitionistic 
propositional logic, and is closed under the rules of \emph{substitution},
\emph{modus ponens}, and rules (GC\,${\Down}{\UP}$) and (GC\,${\UP}{\Down}$). 
The following rules are admissible in {\sf IntGC}:

\begin{tabbing}
\TABS
\> (RN$\Down$) $\displaystyle \frac{A}{\Down A}$ \\[4mm]
\> (RM$\Down$) $\displaystyle \frac{A \to B}{\Down A \to \Down B}$  \>(RM$\UP$) $\displaystyle\frac{A \to B}{\UP A \to \UP B}$ \\
\end{tabbing}
In addition, the following formulas are provable:
\begin{enumerate}[\rm ({GC}1)]
\item $A \to \Down \UP A$ \ and \  $\UP \Down A \to A$;

\item $\UP A \leftrightarrow \UP \Down \UP A$ \ and \ $\Down A \leftrightarrow \Down \UP \Down A$;

\item $\Down \top$  \ and \  $\neg \UP \bot$;

\item $\Down (A \wedge B)  \leftrightarrow  \Down A \wedge \Down B$ \ and \
$\UP (A \vee B)  \leftrightarrow \UP A \vee \UP B$;

\item $\Down (A \to B) \to (\Down A \to \Down B)$.
\end{enumerate}

The language of the logic {\sf Int2GC} is the one of {\sf IntGC} extended by two unary connectives 
${\Up}$ and ${\DOWN}$, and the logic {\sf Int2GC} is the smallest logic extending {\sf IntGC} by rules
(GC\,${\DOWN}{\Up}$) and (GC\,${\Up}{\DOWN}$). Obviously, in {\sf Int2GC} also the rules:
\begin{tabbing}
\TABS
\> (RN$\DOWN$) $\displaystyle \frac{A}{\DOWN A}$ \\[4mm]
\> (RM$\DOWN$) $\displaystyle \frac{A \to B}{\DOWN A \to \Down B}$  \>(RM$\Up$) $\displaystyle\frac{A \to B}{\Up A \to \Up B}$ \\
\end{tabbing}
are admissible, and the following formulas are provable:
\begin{enumerate}[({GC}1)$^\star$]

\item $A \to \DOWN \Up A$ \ and \  $\Up \DOWN A \to A$;

\item $\Up A \leftrightarrow \Up \DOWN \Up A$ \ and \ $\DOWN A \leftrightarrow \DOWN \Up \DOWN A$;

\item $\DOWN \top$  \ and \  $\neg \Up \bot$;

\item $\DOWN (A \wedge B)  \leftrightarrow  \DOWN A \wedge \DOWN B$ \ and \
$\Up (A \vee B)  \leftrightarrow \Up A \vee \Up B$;

\item $\DOWN (A \to B) \to (\DOWN A \to \DOWN B)$.
\end{enumerate}

\emph{Intuitionistic modal logic}\/ {\sf IK} was introduced by
G.~Fischer Servi in \cite{FishServ84}. The logic {\sf IK} is obtained
by adding two modal connectives $\Diamond$ and $\Box$ to intuitionistic
logic satisfying the following axioms:
\begin{enumerate}[\rm ({IK}1)]\label{Axioms:Fischer}
\item $\Diamond (A \vee B) \to \Diamond A \vee \Diamond B$
\item $\Box A \wedge \Box B \to \Box (A \wedge B)$
\item $\neg \Diamond \bot$
\item $\Diamond( A \to\ B) \to (\Box A \to \Diamond B)$
\item $(\Diamond A \to \Box B) \to \Box(A \to B)$ 
\end{enumerate}
In addition, the \emph{monotonicity rules} for both $\Diamond$ and $\Box$ are admissible, that is:
\begin{tabbing}
\TABS
\> (RM$\Diamond$) $\displaystyle \frac{A \to B}{\Diamond A \to \Diamond B}$  
\> (RM$\Box$) $\displaystyle\frac{A \to B}{\Box A \to \Box B}$ 
\end{tabbing}

In this work, we call axioms (IK4) and (IK5) the \emph{Fischer Servi axioms}, and they have
a special role in interlinking the two Galois connections of {\sf Int2GC}. From (IK4) and (IK5)
we can form the following four axioms by replacing $\Box$ and $\Diamond$ by $\DOWN$ and $\UP$,
and by $\Down$ and $\Up$, respectively:
\begin{enumerate}[({FS}1)]
 \item  $\UP(A \to B) \to (\DOWN A \to \UP B)$ 
 \item $\Up(A \to B) \to (\Down A \to \Up B)$ 
 \item $(\UP A \to \DOWN B) \to \DOWN(A \to B)$ 
 \item $(\Up A \to \Down B) \to \Down(A \to B)$ 
\end{enumerate}

\begin{proposition} \label{Prop:FSEquiv}
In {\sf Int2GC}, the following assertions hold:
\begin{enumerate}[\rm (a)]
\item Axioms {\rm (FS1)} and {\rm (FS4)} are equivalent.
\item Axioms {\rm (FS2)} and {\rm (FS3)} are equivalent.
\end{enumerate}
\end{proposition}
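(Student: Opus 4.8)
The plan is to isolate a single auxiliary formula and show that each of (FS1) and (FS4) is equivalent, in {\sf Int2GC}, to it; part (b) will then come for free by symmetry. Let $(\dagger)$ denote the schema $\UP(\Up A \to \Down B) \to (A \to B)$. First I would note that $(\dagger)$ is equivalent to (FS4): applying the adjunction rules (GC\,${\Down}{\UP}$) and (GC\,${\UP}{\Down}$) with $C := \Up A \to \Down B$ and $D := A \to B$, the formula $(\Up A \to \Down B)\to\Down(A\to B)$ is provable iff $\UP(\Up A\to\Down B)\to(A\to B)$ is. So it remains only to prove that $(\dagger)$ is equivalent to (FS1). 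The whole argument uses just these adjunction rules, the unit/counit theorems $A \to \Down\UP A$ and $\UP\Down A \to A$ from (GC1) and $A \to \DOWN\Up A$ and $\Up\DOWN A \to A$ from (GC1)$^\star$, the monotonicity rule (RM$\UP$), and routine intuitionistic manipulations of $\to$ (transitivity, prefixing $B\to C \;\Rightarrow\; (A\to B)\to(A\to C)$, and antitonicity in the antecedent).

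For (FS1) $\Rightarrow$ $(\dagger)$: instantiate (FS1) at $A := \Up A$ and $B := \Down B$, which gives $\UP(\Up A \to \Down B) \to (\DOWN\Up A \to \UP\Down B)$. Feeding in the unit $A \to \DOWN\Up A$ from (GC1)$^\star$ at the front and the counit $\UP\Down B \to B$ from (GC1) at the back is then a purely propositional step and yields exactly $(\dagger)$.

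For $(\dagger)$ $\Rightarrow$ (FS1): instantiate $(\dagger)$ at $A := \DOWN A$, so that the antecedent becomes $\UP(\Up\DOWN A \to \Down B)$; using the counit $\Up\DOWN A \to A$ from (GC1)$^\star$ together with (RM$\UP$) to weaken the antecedent, one obtains $\UP(A \to \Down B) \to (\DOWN A \to B)$ for all $A,B$. Now instantiate $B := \UP B$ to get $\UP(A \to \Down\UP B) \to (\DOWN A \to \UP B)$, and compose with the unit $B \to \Down\UP B$ from (GC1) (again via (RM$\UP$)) to arrive at $\UP(A \to B) \to (\DOWN A \to \UP B)$, which is (FS1). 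Finally, part (b) follows immediately from part (a): the simultaneous swap $\UP \leftrightarrow \Up$, $\Down \leftrightarrow \DOWN$ permutes the defining rules of {\sf Int2GC} among themselves and hence is an automorphism of the logic, and this swap carries (FS1) to (FS2) and (FS4) to (FS3).

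The only place that calls for care is the bookkeeping: at each step one must keep track of which of the two adjunctions is being invoked, whether a unit or a counit is needed at a given occurrence, and that $\to$ is antitone in its first argument; none of the individual steps is expected to be genuinely hard, since once (FS1) and (FS4) are read through the adjunction the derivation is essentially forced.
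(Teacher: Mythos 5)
Your proof is correct and is essentially the same as the paper's: your pivot $(\dagger)$, namely $\UP(\Up A\to\Down B)\to(A\to B)$, is exactly the formula the paper derives en route in both directions, and your instantiations at $\Up A,\Down B$ and at $\DOWN A,\UP B$, combined with the unit/counit laws (GC1), (GC1)$^\star$, the rule (RM$\UP$), and the Galois rules, match the paper's steps one for one. The only differences are organizational: you factor both implications through $(\dagger)$ and justify part (b) by the swap automorphism $\UP\leftrightarrow\Up$, $\Down\leftrightarrow\DOWN$ where the paper simply says ``analogously.''
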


\begin{proof}
We prove only assertion (a), because (b) can be proved analogously. Here
$\vdash A$ denotes that $A$ is provable in {\sf Int2GC}.

\smallskip
\noindent%
(FS1)$\Rightarrow$(FS4):  Let us set $X := A$, $Y := \DOWN \Up A$ and $Z := \UP \Down B$ 
in the provable formula  $(X \to Y) \to ((Y \to Z) \to (X \to Z))$.
We get $\vdash (\DOWN \Up A \to \UP \Down B) \to (A \to \UP \Down B)$
by using also $\vdash A \to \DOWN \Up A$. This is equivalent 
to $\vdash A \wedge (\DOWN \Up A \to \UP \Down B) \to \UP \Down B$. 
Because $\vdash \UP \Down B \to B$, this means  
$\vdash A \wedge (\DOWN \Up A \to \UP \Down B)  \to B$ and
$\vdash (\DOWN \Up A \to \UP \Down B)  \to(A \to B)$. 
If we set $A:= \Up A$ and $B:= \Down B$ in (FS1), we obtain
$\vdash \UP(\Up A \to \Down B) \to (\DOWN \Up A \to \UP \Down B)$,
and so $\vdash \UP(\Up A \to \Down B)  \to (A \to B)$. This implies
$\vdash (\Up A \to \Down B)  \to \Down(A \to B)$ by (GC\,${\UP}{\Down}$).

\smallskip\noindent%
(FS4)$\Rightarrow$(FS1): 
We set $X := \Up \DOWN A$, $Y := A$ and $Z := B$ 
in $(X \to Y) \to ((Y \to Z) \to (X \to Z))$. This gives
$\vdash  (\Up \DOWN A \to A) \to ((A \to B) \to (\Up \DOWN A \to B))$,
and $\vdash (A \to B) \to (\Up \DOWN A \to \Down \UP B)$,
since $\vdash \Up \DOWN A \to A$ and $\vdash B \to \Down \UP B$.
By monotonicity, $\vdash \UP(A \to B) \to \UP(\Up \DOWN A \to \Down \UP B)$. 
By setting $A := \DOWN A$ and  $B:= \UP B$ in (FS4), we have
$\vdash (\Up \DOWN A \to \Down \UP B) \to \Down (\DOWN A \to \UP B)$ and
$\vdash \UP(\Up \DOWN A \to \Down \UP B) \to (\DOWN A \to \UP B)$ 
by (GC\,${\Down}{\UP}$). Therefore, we obtain 
$\vdash \UP(A \to B) \to  (\DOWN A \to \UP B)$. 
\end{proof}

The logic {\sf Int2GC+FS} is defined as the extension of {\sf Int2GC} that
satisfies also the Fischer Servi axioms (FS1)--(FS4). By 
Proposition~\ref{Prop:FSEquiv} it is clear that we have several equivalent 
axiomatisations of {\sf Int2GC+FS}, that is:
\[ \mbox{\sf Int2GC+FS} = {\sf Int2GC} + \{ {\rm (FS1) \ \text{or} \ (FS4)} \}
+ \{ {\rm (FS2)  \ \text{or} \ (FS3)} \}. \]
The logic {\sf Int2GC+FS} satisfies the counterparts of axioms 
(IK1)--(IK5) of {\sf IK}, so {\sf Int2GC+FS} can be regarded as a intuitionistic bi-modal 
logic, and the pairs $\UP$, $\DOWN$ and  $\Up$, $\Down$ are intuitionistic modal connectives
in the sense of Fischer Servi.

\section{Algebraic Semantics and Completeness }  
\label{Sect:H2GC-algebras}

\subsection{H2GC-algebras and H2GC+FS-algebras}
\label{SubSec:AlgDefinitions}

In \cite{DzJaKo10}, we introduced HGC-algebras as counterparts of the logic {\sf IntGC},
and we showed that {\sf IntGC} is complete with respect to HGC-algebras. In this section, we
define H2GC- and H2GC+FS-algebras and give completeness theorems for {\sf Int2GC} and {\sf Int2GC+FS} 
in terms of these algebras.

Let $\varphi \colon P \to Q$ and $\psi \colon Q \to P$ be maps between
ordered sets $P$ and $Q$. The pair $(\varphi,\psi)$ is a
\emph{Galois connection} between $P$ and $Q$, if for all $p \in P$ and $q \in Q$,
\[
 \varphi(p) \leq q  \iff p \le \psi(q).
\]
For a Galois connection $(\varphi,\psi)$,
$\varphi$ preserves all existing joins and $\psi$ preserves all
existing meets. If $P$ and $Q$ are bounded, then $\varphi(0) = 0$ and $\psi(1) = 1$. In addition, 
a pair $(\varphi,\psi)$ forms a Galois connection if
and only if the following conditions hold:
\begin{enumerate}[\rm (i)]
\item $p \leq \psi (\varphi (p))$ for all $p \in P$ and $\varphi ( \psi(q)) \leq q$ for all $q \in Q$;
\item the maps $\varphi$ and $\psi$ are order-preserving.
\end{enumerate}
In particular, if $\varphi$ and $\psi$ are maps on a lattice $L$, then
the pair $(\varphi,\psi)$ is a Galois connection on $L$ if and only if the following identities are satisfied
for all $a,b \in L$:
\begin{enumerate}[({gc}1)]
 \item $\varphi(a \vee b) = \varphi(a) \vee \varphi(b)$ \ and \ $\psi(a \wedge b) = \psi(a) \wedge \psi(b)$
 \item $a = a \wedge \psi (\varphi (a))$ \  and  \ $a = a \vee \varphi(\psi(a))$  
\end{enumerate}
More properties of Galois connections can be found in \cite{ErKoMeSt93}, for instance.

A \emph{Heyting algebra} $H$ is a lattice with $0$ such that for all $a,b \in H$, 
there is a greatest element $x$ of $H$ with $a \wedge x \leq b$.
This element is the \emph{relative pseudocomplement} of $a$ with respect to $b$,
and is denoted $a \to b$. Note that Heyting algebras are always distributive pseudocomplemented
lattices such that the pseudocomplement $\neg a$ of $a$ is $a \to 0$. 
Because $\neg 0$ is the greatest element, Heyting algebras
are bounded. Thus, a Heyting algebra $H$ can be considered
as an algebraic structure $\mathbb{H} = (H,\vee,\wedge,\to, 0)$, which
can be equationally defined as follows (see e.g. \cite{BaDw74}):
\begin{enumerate}[({h}1)]
 \item A set of identities which define lattice with $0$
 \item $x \wedge (x \to y) = x \wedge y$
 \item $x \wedge (y \to z) = x\wedge ((x \wedge y) \to (x \wedge z))$
 \item $z \wedge (x \wedge y \to x) = x$
\end{enumerate}
Note also that if $\mathbb{H}$ is a Heyting algebra,
then (gc2) can be written in the form   
\begin{enumerate}[({gc}2)$^*$]
 \item[({gc}2)$^*$] $a \to \psi (\varphi (a)) = 1$ and $\varphi(\psi(a)) \to a = 1$.
\end{enumerate}

An \emph{HGC-algebra} is an algebra $(H,\vee,\wedge,\to,0,{^\RIGHT},{^\Left})$, where
$\mathbb{H} = (H,\vee,\wedge,\to,0)$ is a Heyting algebra and $({^\RIGHT}, {^\Left})$ is a Galois 
connection on $\mathbb{H}$. By the above, HGC-algebras form an equational class.  
HGC-algebras are usually denoted by $(\mathbb{H},{^\RIGHT},{^\Left})$.
An \emph{H2GC-algebra} $(H,\vee,\wedge,\to,0,{^\RIGHT},{^\Left},{^\Right},{^\LEFT})$ is such
that $\mathbb{H} = (H,\vee,\wedge,\to,0)$ is a Heyting algebra, and  
$(\mathbb{H},{^\RIGHT},{^\Left})$ and $(\mathbb{H},{^\Right},{^\LEFT})$ are
H2GC-algebras, meaning that  $({^\RIGHT}, {^\Left})$ and  $({^\Right}, {^\LEFT})$ are
Galois connections on $\mathbb{H}$. Also H2GC-algebras form
an equational class. We denote H2GC-algebras simply by 
$(\mathbb{H},{^\RIGHT},{^\Left}, {^\Right},{^\LEFT})$.

For an H2GC-algebra  $(\mathbb{H},{^\RIGHT},{^\Left}, {^\Right},{^\LEFT})$, we
introduce the following identities corresponding to the instances (FS1)--(FS4) of
the Fischer Servi axioms:
\begin{enumerate}[({fs}1)]
\item $(a \to b)^\RIGHT \to  (a^\LEFT \to b^\RIGHT) = 1$
\item $(a \to b)^\Right \to  (a^\Left \to b^\Right) = 1$ 
\item $(a^\RIGHT \to b^\LEFT) \to (a \to b)^\LEFT = 1$ 
\item $(a^\Right \to b^\Left) \to (a \to b)^\Left = 1$
\end{enumerate}

In \cite{Dunn94}, J.~M. Dunn  studied minimal 
positive modal logic $\textsf{K}_+$ with the connectives $\wedge$, $\vee$, $\Box$, and
$\Diamond$. $\textsf{K}_+$  can be described in algebraic terms as modal logic based on 
a distributive lattice with two operations $\Box$ and $\Diamond$, where $\Box$ 
distributes over $\wedge$, $\Diamond$ distributes over $\vee$,  and the following 
two conditions hold:\medskip
\begin{enumerate}[($\mathrm{D}_\wedge$)] 
\item[($\mathrm{D}_\wedge$)]  $\Diamond a \wedge \Box b \leq \Diamond( a \wedge b)$
\item[($\mathrm{D}_\vee$)] $\Box(a \vee b) \leq \Box a \vee \Diamond b$
\end{enumerate}
We introduce the instances of ($\mathrm{D}_\wedge$)  as identities defined on an H2GC-algebra:
\begin{enumerate}[({d}1)]
\item $a^\RIGHT \wedge b^\LEFT \to ( a \wedge b)^\RIGHT = 1$
\item $a^\Right \wedge b^\Left \to ( a \wedge b)^\Right = 1$
\end{enumerate}
Now we may write the following proposition.

\begin{proposition} \label{Prop:DunnConditions}
Let $(\mathbb{H},{^\RIGHT},{^\Left}, {^\Right},{^\LEFT})$ be an H2GC-algebra.
\begin{enumerate}[\rm (a)]
\item Identities {\rm (fs1)}, {\rm (d1)}, and {\rm (fs4)} are equivalent.
\item Identities {\rm (fs2)}, {\rm (d2)}, and {\rm (fs3)}  are equivalent.
\end{enumerate}
\end{proposition}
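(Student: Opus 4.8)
The plan is to prove assertion (a); assertion (b) follows by the same argument with the roles of the two Galois connections swapped. Since Proposition~\ref{Prop:FSEquiv} already establishes that (FS1) and (FS4) are equivalent in {\sf Int2GC}, and since the algebraic completeness of {\sf Int2GC} with respect to H2GC-algebras (stated later in the paper) translates each implication $A \to B$ provable in {\sf Int2GC} into the identity ``$a \to b = 1$'' holding in every H2GC-algebra, the equivalence of (fs1) and (fs4) in an arbitrary H2GC-algebra is essentially a transcription of the syntactic proof already given: every step there uses only theorems of {\sf Int2GC} (the implicational schema $(X\to Y)\to((Y\to Z)\to(X\to Z))$, the Galois-connection facts (GC1)$^\star$, monotonicity) together with the rules (GC\,${\UP}{\Down}$) and (GC\,${\Down}{\UP}$), all of which are available algebraically as (gc1), (gc2), and the adjunction $\varphi(a)\le b \iff a\le\psi(b)$. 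So the bulk of the work is to fold in the new condition (d1) and show it sits between (fs1) and (fs4).

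First I would prove (fs1)$\Rightarrow$(d1). Working in an H2GC-algebra, (fs1) says $(a\to b)^\RIGHT \wedge a^\LEFT \le b^\RIGHT$ for all $a,b$; the trick is to substitute cleverly. Replace $b$ by $a\wedge b$: then $a \wedge (a\wedge b) = a\wedge b$, so $a \to (a\wedge b) \ge b$, whence by monotonicity of $^\RIGHT$ we get $b^\RIGHT \le (a\to(a\wedge b))^\RIGHT$, and (fs1) applied with $b$ replaced by $a\wedge b$ yields $b^\RIGHT \wedge a^\LEFT \le (a\to(a\wedge b))^\RIGHT \wedge a^\LEFT \le (a\wedge b)^\RIGHT$. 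Rewriting meets as the implication ``$=1$'' form gives (d1). Conversely, for (d1)$\Rightarrow$(fs1), start from $a^\RIGHT \wedge b^\LEFT \le (a\wedge b)^\RIGHT$ and substitute $a \mapsto a\to b$ and $b \mapsto a$: since $(a\to b)\wedge a \le b$, monotonicity of $^\RIGHT$ gives $((a\to b)\wedge a)^\RIGHT \le b^\RIGHT$, and (d1) gives $(a\to b)^\RIGHT \wedge a^\LEFT \le ((a\to b)\wedge a)^\RIGHT \le b^\RIGHT$, which is (fs1). The equivalence of (fs1) and (fs4) I would obtain either by citing Proposition~\ref{Prop:FSEquiv} together with algebraic completeness, or—more self-containedly—by directly transcribing the two chains of implications in the proof of Proposition~\ref{Prop:FSEquiv} into inequalities in $\mathbb{H}$, replacing each use of (GC\,${\UP}{\Down}$) by ``$a^\RIGHT \le b \iff a \le b^\Left$'' and each use of ``$\vdash X \to Y$'' by ``$x \le y$''.

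The main obstacle, such as it is, is bookkeeping: one must be careful that the substitutions used in the algebraic derivation of (fs1)$\Leftrightarrow$(d1) respect the fact that $^\RIGHT$ and $^\LEFT$ are an adjoint pair (so that the needed monotonicity and the unit/counit inequalities $a\le a^{\RIGHT\Left}$, $a^{\Left\RIGHT}\le a$ are genuinely available) and that no hidden appeal to the \emph{other} Galois connection $({}^\Right,{}^\LEFT)$ or to (d2)/(fs2) creeps in—the three identities (fs1), (d1), (fs4) must be shown equivalent \emph{relative to the H2GC-axioms alone}. Given that every ingredient is already isolated in the excerpt, I expect the proof to be short: essentially two two-line substitution arguments for (fs1)$\Leftrightarrow$(d1), plus the observation that (fs1)$\Leftrightarrow$(fs4) is the algebraic shadow of Proposition~\ref{Prop:FSEquiv}.
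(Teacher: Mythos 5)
Your proposal is correct, and for the core equivalence (fs1)$\Leftrightarrow$(d1) your two substitution arguments --- putting $b := a\wedge b$ into (fs1), and $a := a\to b$, $b := a$ into (d1) --- are exactly the computations in the paper's proof. The only place you diverge is (fs1)$\Leftrightarrow$(fs4). The paper does not route this through Proposition~\ref{Prop:FSEquiv}: it gives a short direct computation, applying (fs1) to $a^\Right$ and $b^\Left$, using the unit/counit inequalities $a \le a^{\Right\LEFT}$ and $b^{\Left\RIGHT}\le b$ of the second Galois connection, and then the adjunction $x^\RIGHT\le y \iff x\le y^\Left$ of the first; this is, in substance, your ``direct transcription'' fallback, so that option lands you on the paper's argument. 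Your preferred route (cite Proposition~\ref{Prop:FSEquiv} plus a logic-to-algebra transfer) also works but needs two small repairs: the transfer you invoke is \emph{soundness}, not completeness (provable formulas become valid identities), and it must be soundness of ${\sf Int2GC}+{\rm (FS1)}$ with respect to H2GC-algebras satisfying (fs1) --- a mild extension of Theorem~\ref{Thm:SoundnessI}, which in any case appears only later in the paper, so this would be a forward (though not circular) reference. One last conceptual nit in your ``obstacles'' paragraph: for (fs1)$\Leftrightarrow$(fs4) an appeal to the second Galois connection $({}^\Right,{}^\LEFT)$ is unavoidable and entirely legitimate, since it is part of the H2GC signature; what must be excluded is any use of (fs2), (fs3), or (d2), and neither your derivations nor the paper's commit that error.
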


\begin{proof} 
(a) Let us set $a := a \to b$ and $b := a$ in  (d1). We obtain 
$(a \to b)^\RIGHT \wedge  a^\LEFT  \leq (a \wedge (a \to b))^\RIGHT \leq b^\RIGHT$,
because $a \wedge (a \to b) \leq b$. This gives directly 
$(a \to b)^\RIGHT \leq  a^\LEFT  \to b^\RIGHT$, that is,
$(a \to b)^\RIGHT \to  (a^\LEFT  \to b^\RIGHT) = 1$, and so
(d1) implies (fs1). Conversely, if we set $b := a \wedge b$ in (fs1), we have
$b^\RIGHT \leq (a \to b)^\RIGHT \leq (a \to a \wedge b)^\RIGHT \leq a^\LEFT \to (a \wedge b)^\RIGHT$,
because $a \to a \wedge b = a \to b$ and $b \leq a \to b$.
This is equivalent to $a^\LEFT \wedge b^\RIGHT \leq (a \wedge b)^\LEFT$, and
$(a^\LEFT \wedge b^\RIGHT) \to (a \wedge b)^\LEFT = 1$.
Thus, also (fs1) implies (d1), and (fs1) and (d1) are equivalent.

Suppose that (fs1) holds. Then  $(a^\Right \to b^\Left)^\RIGHT \leq  a^{\Right \LEFT} \to b^{\Left \RIGHT}$.
Since $a \leq a^{\Right \LEFT}$ and $b^{\Left \RIGHT} \leq b$, we have  
$(a^\Right \to b^\Left)^\RIGHT \leq  a \to b$. This is equivalent to
$a^\Right \to b^\Left \leq  (a \to b)^\Left$ and
$(a^\Right \to b^\Left) \to  (a \to b)^\Left = 1$,  that is, (fs4) is true.
On the other hand, if (fs4) holds, then
$a \to b \leq a^{\LEFT\Right} \to b^{\RIGHT\Left} \leq (a^\LEFT \to b^\RIGHT)^\Left$,
that is, $(a \to b)^\RIGHT \leq a^\LEFT \to b^\RIGHT$,
$(a \to b)^\RIGHT \leq (a^\LEFT \to b^\RIGHT) = 1$, and (fs1) is true.
Hence, (fs1) and (fs4) are equivalent.

Case (b) can be proved analogously.
\end{proof}

An \emph{H2GC+FS-algebra} is an H2GC-algebra $(\mathbb{H},{^\RIGHT},{^\Left}, {^\Right},{^\LEFT})$
satisfying (fs1) and (fs2). By Proposition~\ref{Prop:DunnConditions}, H2GC+FS-algebras have several
equivalent characterisations. Clearly, H2GC+FS-algebras form an equational class.

\subsection{Rough fuzzy sets on complete Heyting algebras}
\label{sSec:FuzzyMotivation}

We consider here rough lattice-valued fuzzy sets defined on complete Heyting algebras. These
are also closely connected to fuzzy Galois connections studied,
for instance, in \cites{Belo99,GeoPop04}. 

A \emph{complete Heyting algebra} is a Heyting algebra such that its underlying ordered set 
is a complete lattice.  A complete lattice $L$ satisfies the \emph{join-infinite distributive law} 
if for any $S \subseteq L$ and $x \in L$,
\begin{equation*}\label{Eq:JID} \tag{JID}
x \wedge \Big ( \bigvee S \Big ) = \bigvee \{ x \wedge y \mid y \in S \}.
\end{equation*}
A complete lattice is a Heyting algebra if and only if it satisfies \eqref{Eq:JID}
(see e.g. \cites{Grat98,RasSik68}). Thus, complete Heyting algebras are the
complete lattices satisfying \eqref{Eq:JID}.

Fuzzy sets were generalized to $L$-fuzzy sets by J.~A.~Goguen in such a way
that an \emph{$L$-fuzzy set $\varphi$ on $U$} is a mapping 
$\varphi \colon U \to L$, where $U$ is any set representing objects of some universe
of discourse and $L$ is a partially ordered set \cite{Gogu67}. 
The set $L^U$ of all maps from $U$ to $L$ is then the set of all $L$-fuzzy sets on $U$.
The set $L^U$  can be equipped whatever operators $L$ has, and these induced operators obey any law valid 
in $L$ which extends pointwise. 

Here we assume that $\mathbb{H}$ is a complete Heyting algebra, therefore
we can make $H^U$ a complete Heyting algebra by defining
\[
\Big (  \bigvee_{i \in I} \varphi_i \Big ) (a) = \bigvee_{i \in I} \varphi_i(a)  
\quad \text{ and } \quad
\Big (  \bigwedge_{i \in I} \varphi_i \Big ) (a) = \bigwedge_{i \in H} \varphi_i (a)
\]
for all $\{ \varphi_i \}_{i \in I} \subseteq H^U$.
The least element of $H^U$ is $\mathbf{0} \colon x \mapsto 0$
and the greatest element of $H^U$ is $\mathbf{1} \colon x \mapsto 1$.
Furthermore, $H^U$ is relatively pseudocomplemented in such a way that 
for all $\varphi,\psi \in L^U$ and $a \in U$,
\[
  (\varphi \to \psi) (a) = \varphi(a) \to \psi(a)
\]
We denote this complete Heyting algebra by $\mathbb{H}^U$. Elements of this
Heyting algebra are called  \emph{$\mathbb{H}$-sets}.

Dubois and Prade introduced \emph{rough fuzzy sets}  in \cite{DuPr90}. The idea
is that the objects to be approximated are fuzzy  sets, and
the approximations are determined by means of fuzzy relations. Here we study
\emph{rough $\mathbb{H}$-sets}, which means that approximations of $\mathbb{H}$-sets are determined
by $\mathbb{H}$-fuzzy relations. 

Let $\varphi$ be an $\mathbb{H}$-set and let $R$ be an $\mathbb{H}$-fuzzy relation on $U$, 
that is, $R$ is a mapping from $U \times U$ to $H$.
Then, we may define the $\mathbb{H}$-sets $\varphi^\UP$ and $\varphi^\DOWN$ by setting
\begin{eqnarray*}
\varphi^\UP(x) & = & \bigvee_{y \in U} \{ R(x,y) \wedge \varphi(y) \} \\
\varphi^\DOWN(x) & = & \bigwedge_{y \in U} \{  R(x,y) \to \varphi(y) \}
\end{eqnarray*}
for all $x \in U$.
The $\mathbb{H}$-sets $\varphi^\UP$ and $\varphi^\DOWN$ are called the \emph{upper} and
the \emph{lower approximations} of $\varphi$.

We can define another pair of mappings in terms of 
the inverse of $R$ by setting
\begin{eqnarray*}
\varphi^\Up(x) & = & \bigvee_{y \in U} \{ R(y,x) \wedge \varphi(y) \} \\
\varphi^\Down(x) & = & \bigwedge_{y \in U} \{  R(y,x) \to \varphi(y) \}
\end{eqnarray*}
for all $x \in U$.
It is clear that if $\varphi$ is a two-valued set on $U$ and $R$
is a two-valued binary relation on $U$, then the operations $^\UP$, $^\DOWN$, $^\Up$, and $^\Down$
coincide with the rough set operators defined by a binary relation.

\begin{proposition}\label{Prop:RoughFuzzy}
For any complete Heyting algebra $\mathbb{H}$ and an $\mathbb{H}$-fuzzy relation $R$ on $U$, the algebra of rough 
$\mathbb{H}$-sets $(\mathbb{H}^U, {^\UP}, {^\DOWN}, {^\Up}, {^\Down})$ is 
an {H2GC+FS}-algebra.
\end{proposition}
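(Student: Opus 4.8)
The plan is to verify that $(\mathbb{H}^U, {^\UP}, {^\DOWN}, {^\Up}, {^\Down})$ satisfies the two defining requirements of an H2GC+FS-algebra: first that $({^\UP},{^\Down})$ and $({^\Up},{^\DOWN})$ are Galois connections on the complete Heyting algebra $\mathbb{H}^U$, and second that identities (fs1) and (fs2) hold. For the Galois connection part, I would work directly from the adjunction-style definition $\varphi^\UP \le \psi \iff \varphi \le \psi^\Down$. Unfolding pointwise, $\varphi^\UP \le \psi$ says $\bigvee_{y}\{R(x,y)\wedge\varphi(y)\} \le \psi(x)$ for all $x$, which by the join-elimination property in a complete lattice is equivalent to $R(x,y)\wedge\varphi(y)\le\psi(x)$ for all $x,y$; using the residuation law $a\wedge b\le c \iff b\le a\to c$ valid in any Heyting algebra, this rearranges to $\varphi(y)\le R(x,y)\to\psi(x)$ for all $x,y$, i.e. $\varphi(y)\le\bigwedge_x\{R(x,y)\to\psi(x)\}$. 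Here one must be careful: the right-hand side is $\psi^\Down(y)$ precisely because $^\Down$ is defined using $R(y,x)$ reversed, so the meet is over the first coordinate — matching. Thus $({^\UP},{^\Down})$ is a Galois connection; the pair $({^\Up},{^\DOWN})$ is handled identically with the roles of the two coordinates of $R$ swapped (equivalently, by replacing $R$ with its transpose).

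For the Fischer Servi identities, by Proposition~\ref{Prop:DunnConditions} it suffices to verify the Dunn-type identities (d1) and (d2), i.e. $\varphi^\UP\wedge\psi^\DOWN \le (\varphi\wedge\psi)^\UP$ and its $^\Up,^\Down$-analogue, and these are the more tractable forms. I would verify (d1) pointwise: at a point $x$, the left side is $\bigl(\bigvee_y\{R(x,y)\wedge\varphi(y)\}\bigr)\wedge\bigl(\bigwedge_z\{R(x,z)\to\psi(z)\}\bigr)$. Using (JID) to pull the meet (a fixed element, as $x$ is fixed) inside the join, this equals $\bigvee_y\bigl(R(x,y)\wedge\varphi(y)\wedge\bigwedge_z\{R(x,z)\to\psi(z)\}\bigr)$. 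For each $y$, instantiating the inner meet at $z=y$ and using $R(x,y)\wedge(R(x,y)\to\psi(y))\le\psi(y)$, this term is $\le R(x,y)\wedge\varphi(y)\wedge\psi(y)$, whose join over $y$ is exactly $(\varphi\wedge\psi)^\UP(x)$. Hence (d1) holds; (d2) follows by the same computation with $R$ transposed.

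The main obstacle, to the extent there is one, is bookkeeping: keeping straight which coordinate of the $\mathbb{H}$-fuzzy relation $R$ each operator uses, so that the "reversal" built into the definitions of $^\DOWN$ and $^\Down$ lines up correctly with the Galois-connection inequalities and with the instantiation $z=y$ in the Dunn computation. There is also the minor point that the display $\bigvee S \le c \iff (\forall s\in S)\,s\le c$ and the residuation law are precisely the two lattice-theoretic facts about complete Heyting algebras recalled in the excerpt, so I would simply cite them rather than reprove them. No genuinely hard estimate is involved; once the pointwise reductions are in place, every step is a one-line application of residuation, join-elimination, or (JID).
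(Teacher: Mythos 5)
Your proposal is correct and follows essentially the same route as the paper: establish the two Galois connections, then verify the Dunn identities (d1) and (d2) pointwise via (JID) and the instantiation $z=y$, invoking Proposition~\ref{Prop:DunnConditions} to pass to (fs1) and (fs2). The only (immaterial) difference is that you check the Galois connections directly from the residuation equivalence $\varphi^\UP\le\psi\iff\varphi\le\psi^\Down$, whereas the paper verifies the equivalent characterisation by order-preservation together with $\varphi^{\Down\UP}\le\varphi$ and $\varphi\le\varphi^{\UP\Down}$; your (d1) computation is the same as the paper's.
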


\begin{proof}
Suppose $\varphi$ and $\psi$ are $\mathbb{H}$-sets such that
$\varphi \leq \psi$. Then for all $y \in U$, $R(x,y) \wedge \varphi(y) \ \leq  \ R(x,y) \wedge \psi(y)$
and this implies
\[ \varphi^\UP(x) = \bigvee_{y \in U}  \{ R(x,y) \wedge \varphi(y) \} \leq  
\bigvee_{y \in U}  \{ R(x,y) \wedge \psi(y) \} = \psi^\UP(x). \]
Similarly, $ R(y,x) \to \varphi(y) \ \leq \ R(y,x) \to \psi(y) $ for all 
$y \in U$. Thus,
\[ \varphi^\Down(x) =  \bigwedge_{y \in U} \{  R(y,x) \to \varphi(y) \} \leq
\bigwedge_{y \in U} \{  R(y,x) \to \psi(y) \} =  \psi^\Down(x). \]
So, $^\UP$ and $^\Down$ are order-preserving.
By definition, for all $x \in U$, 
\begin{eqnarray*}
\varphi^{\Down \UP}(x) & = & \bigvee_{y \in U}  \{ R(x,y) \wedge \varphi^\Down (y)  \}  
=  \bigvee_{y \in U} \Big \{ R(x,y) \wedge  \bigwedge_{z \in U} \{  R(z,y) \to \varphi(z) \} \, \Big \} \\
& \leq & \bigvee_{y \in U} \{ R(x,y) \wedge  ( R(x,y) \to \varphi(x) \, ) \, \} 
 \leq  \bigvee_{y \in U} \{  \varphi(x) \} =  \varphi(x).
\end{eqnarray*}
This means that $\varphi^{\UP \Down} \leq \varphi$. 
Analogously, for any $x \in U$, 
\begin{eqnarray*}
\varphi^{\UP \Down}(x) & = & \bigwedge_{y \in U} \{  R(y,x) \to \varphi^\UP (y) \} 
=  \bigwedge_{y \in U} \Big \{  R(y,x) \to \bigvee_{z \in U} \{ R(y,z) \wedge \varphi(z) \} \, \Big \} \\
& \geq & \bigwedge_{y \in U} \{  R(y,x) \to ( \, R(y,x) \wedge \varphi(x) \, ) \} 
\geq  \bigwedge_{y \in U} \{  \varphi(x) \} =  \varphi(x). 
\end{eqnarray*}
Thus, also $\varphi \leq \varphi^{\UP \Down}$. We have that  $({^\UP},{^\Down})$
is a Galois connection, because (gc1) and (gc2) are satisfied. Similarly, 
we can show that $({^\Up},{^\DOWN})$ is a Galois connection.

Next we show that (d1) holds. For all $x,y \in U$, we have
\begin{align*}
 R(x,y) \wedge \varphi(y)  \wedge \psi(x)^\DOWN &=
 R(x,y) \wedge \varphi(y)  \wedge \bigwedge_{z \in U} \{  R(x,z) \to \psi (z) \}\\
 & \leq R(x,y) \wedge \varphi(y) \wedge (R(x,y) \to \psi(y)) \\
 & = (R(x,y) \wedge (R(x,y) \to \psi(y)))  \wedge \varphi(y) \\
 & = R(x,y) \wedge \psi(y)  \wedge \varphi(y) \\
 & = R(x,y) \wedge (\varphi \wedge \psi)(y)\\
 & \leq \bigvee_{z \in U} \{ R(x,z) \wedge (\varphi \wedge \psi)(z) \}\\
 & = (\varphi \wedge \psi)^\UP(x).
\end{align*}
Hence, for all $y \in U$, 
\[ R(x,y) \wedge \varphi(y) \wedge \psi(x)^\DOWN \leq  (\varphi \wedge \psi)^\UP(x). \]
Because complete Heyting algebras satisfy the join-infinite distributive law, we have that
for all $x \in U$,
\begin{align*}
(\varphi^\UP \wedge \psi^\DOWN)(x) & = \varphi^\UP(x) \wedge \psi^\DOWN(x)  
= \bigvee_{y \in U} \{ R(x,y) \wedge \varphi(y)\} \wedge \psi^\DOWN(x) \\
& = \bigvee_{y \in U} \{ R(x,y) \wedge \varphi(y) \wedge \psi^\DOWN(x) \} 
\leq  (\varphi \wedge \psi)^\UP(x). 
\end{align*}
Thus, $\varphi^\UP \wedge \psi^\DOWN \leq  (\varphi \wedge \psi)^\UP$.
Assertion (d2) can be proved similarly.
\end{proof}

\begin{example}\label{Exa:NotDunn2}
The instances
\[
 (a \vee b)^\LEFT \leq a^\LEFT \vee b^\RIGHT \quad \text{ and } \quad (a \vee b)^\Left \leq a^\Left \vee b^\Right
\]
of  Dunn's axiom (D$_\vee$) are false in some H2GC+FS-algebras of rough $\mathbb{H}$-sets.

Namely, let $U = \{x,y\}$ and consider the finite (and hence complete) Heyting algebra $\mathbf{2}^2 \oplus 1$, 
that is, $\mathbb{H} = \{0, a, b, c, 1\}$ the Heyting algebra with the order 
$0 < a, b < c < 1$, where $a$ and $b$ are incomparable. Note that $\neg a = b$ and $\neg b = a$.

We define two  $\mathbb{H}$-sets $\varphi, \psi$ on $U$ by setting $\varphi(u) = 0 $ and $ \psi(u) = 1$ for all $u \in U$. 
An $\mathbb{H}$-fuzzy relation $R \colon U \times U \to H$ is defined by  
$R(x, x) = R(y,y) = a$ and $R(x, y) = R(y,x) = b$. Then,
\begin{align*} 
(\varphi \vee \psi)^\DOWN (x) &= \bigwedge_{u \in U}  (R(x,u) \to (\varphi \vee \psi) (u) 
     = \bigwedge_{u \in U}  (R(x,u) \to (\varphi(u) \vee \psi(u)) \\
     &= \bigwedge_{u \in U}  (R(x,u) \to 1) 
     = (a \to 1) \wedge (b \to 1) 
     = 1 \wedge 1 = 1,
\end{align*}
but 
\begin{align*}
\varphi ^\DOWN (x) \vee  \psi^\UP (x) &= \bigwedge_{u \in U}  (R(x,u) \to  \varphi (u)) \vee \bigvee_{u \in U} (R(x,u) \wedge \psi (u)) \\   
&= \bigwedge_{u \in U}  (R(x,u) \to 0) \vee \bigvee_{u \in U} (R(x,u) \wedge 1) \\   
&= \bigwedge_{u \in U}  \neg R(x,u) \vee \bigvee_{u \in U} R(x,u) \\   
&= ( \neg R(x,x) \wedge \neg R(x,y)  ) \vee ( R(x,x) \vee R(x,y) ) \\
&= ( \neg a \wedge \neg b  ) \vee ( a \vee b ) 
= 0 \vee c = c.
\end{align*}
Hence condition $(\varphi \vee \psi)^\LEFT \leq \varphi^\LEFT \vee \psi^\RIGHT$ is not satisfied, because $1 \nleq c$. 
Similarly, we can show that 
\begin{center}
$(\varphi \vee \psi)^\Down (y) = 1$ \ and \ $\varphi^\Down (y) \vee  \psi^\Up (y) = 0 \vee c = c$ ,
\end{center}
that is, $(\varphi \vee \psi)^\Left \leq \varphi^\Left \vee \psi^\Right$ is not satisfied.
\end{example}

Hence, we may conclude this subsection by stating that the rough lattice-valued fuzzy sets defined on complete Heyting algebras 
are algebraic models for \textsf{Int2GC+FS}.

\subsection{Algebraic Semantics and Completeness}
\label{sSec:AlgCompleteness}

As we already noted, \textsf{IntGC} is complete with respect to HGC-algebras. 
Here we show completeness of \textsf{Int2GC} and \textsf{IntGC+FS} with respect to
H2GC- and H2GC+FS-algebras.

Let $(\mathbb{H}, {^\RIGHT}, {^\LEFT}, {^\Right}, {^\Left})$ be an H2GC-algebra,
where $\mathbb{H} = (H,\vee,\wedge,\to,0)$. A \emph{valuation} is a function $v \colon \mathrm{Var} \to H$ 
assigning to each propositional variable $p$ an element $v(p)$ of $H$. 
Let $\Phi$ denote the set of well-formed {\sf Int2GC}-formulas. Clearly, $\Phi$
is the set of well-formed {\sf Int2GC+FS}-formulas as well, because these logics have the same language.
The valuation $v$ can be extended to the set $\Phi$ inductively\footnote{%
Note that the idea is that the operations ${^\RIGHT}$, ${^\LEFT}$, ${^\Right}$, ${^\Left}$ may be obtained
from their logical counterparts ${\UP}$, ${\DOWN}$, ${\Up}$, ${\Down}$ just by turning 
them 90 degrees clockwise.}:
\begin{align*}
 v(\neg A)    &= v(A) \to 0       & v(A \to B)   &= v(A) \to v(B) \\
 v(A\wedge B) &= v(A) \wedge v(B) & v(A \vee B) &= v(A) \vee v(B) \\
 v(\UP A)     &= v(A)^\RIGHT       & v(\DOWN A) &=  v(A)^\LEFT\\
 v(\Up A)     &= v(A)^\Right       & v(\Down A) &= v(A)^\Left
\end{align*}
An \textsf{Int2GC}-formula $A$ is \emph{valid} if $v(A) = 1$ for any valuation $v$
on any H2GC-algebra. Similarly, we may define validity of \textsf{Int2GC+FS}-formulas 
over H2GC+FS-algebras.

\begin{theorem}[\bf Soundness I] \label{Thm:SoundnessI}
\text{ }
\begin{enumerate}[\rm (a)]
 \item Provable {\sf Int2GC}-formulas are valid in H2GC-algebras
 \item Provable {\sf Int2GC+FS}-formulas are valid in H2GC+FS-algebras.
\end{enumerate}
\end{theorem}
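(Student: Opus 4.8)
The plan is to prove soundness by the standard induction on the length of a derivation. Since both logics are defined as the smallest logics containing the intuitionistic axioms and closed under certain rules, it suffices to check two things: (1) every axiom is valid in the relevant class of algebras, and (2) every inference rule preserves validity.

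For part (a), I would first recall that a Heyting algebra $\mathbb{H}$ validates all intuitionistic propositional theorems under the given valuation clauses; this is the classical algebraic completeness of intuitionistic logic and is cited as \cite{RasSik68}, so I may invoke it. Hence any intuitionistic axiom instance gets value $1$. Modus ponens preserves validity because in a Heyting algebra, if $v(A) = 1$ and $v(A \to B) = v(A) \to v(B) = 1$, then $1 = 1 \wedge (1 \to v(B)) = 1 \wedge v(B) = v(B)$ by (h2). Substitution preserves validity because a substitution instance is again valued by some valuation (composition of the substitution with $v$). The only genuinely new cases are the four Galois-connection rules. For (GC\,${\Down}{\UP}$): if $v(A \to \Down B) = 1$, then $v(A) \leq v(B)^\Left$, so applying the Galois connection property $\varphi(p) \leq q \iff p \leq \psi(q)$ with $\varphi = {^\RIGHT}$, $\psi = {^\Left}$, we get $v(A)^\RIGHT \leq v(B)$, i.e. $v(\UP A \to B) = 1$. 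The rule (GC\,${\UP}{\Down}$) is the converse implication of the same equivalence, and (GC\,${\DOWN}{\Up}$), (GC\,${\Up}{\DOWN}$) are handled identically using the second Galois connection $({^\Right}, {^\LEFT})$. This establishes (a).

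For part (b), since \textsf{Int2GC+FS} is \textsf{Int2GC} extended by the axioms (FS1)--(FS4), and H2GC+FS-algebras are H2GC-algebras satisfying (fs1) and (fs2), I only need to additionally check that the four axioms (FS1)--(FS4) are valid in every H2GC+FS-algebra. Here I would unwind the valuation: $v$(FS1) $= 1$ is exactly the identity (fs1) $(a \to b)^\RIGHT \to (a^\LEFT \to b^\RIGHT) = 1$ with $a = v(A)$, $b = v(B)$, which holds by definition of an H2GC+FS-algebra; likewise $v$(FS2) $= 1$ is (fs2). For (FS3) and (FS4), I invoke Proposition~\ref{Prop:DunnConditions}: in any H2GC-algebra, (fs2) implies (fs3) and (fs1) implies (fs4), so the identities corresponding to (FS3) and (FS4) hold as well, whence those axioms receive value $1$. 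Combined with part (a) (every H2GC+FS-algebra is in particular an H2GC-algebra, so all the \textsf{Int2GC}-machinery carries over), this gives (b).

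I do not expect a serious obstacle here: the argument is a routine induction, and the main points are simply matching each Galois-connection rule to the defining equivalence of a Galois connection and each Fischer Servi axiom to the corresponding algebraic identity, with Proposition~\ref{Prop:DunnConditions} covering the redundant pair. The only place requiring a little care is bookkeeping around the footnote's ``90 degrees clockwise'' convention—making sure that $\UP$ corresponds to ${^\RIGHT}$ and $\DOWN$ to ${^\LEFT}$ (so that $({^\RIGHT},{^\LEFT})$ really is the Galois connection pairing $\UP$ with $\DOWN$, not some transposed version)—and similarly that the substitutions of the form $A := \Up A$, $B := \Down B$ used implicitly mirror the syntactic proof of Proposition~\ref{Prop:FSEquiv}; but once the valuation clauses are fixed as displayed, these are purely mechanical.
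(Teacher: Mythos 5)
Your argument is correct and is essentially the paper's proof written out in full: the paper likewise reduces everything to the standard algebraic soundness of intuitionistic logic, the fact (carried over from the {\sf IntGC} paper) that the four Galois-connection rules preserve validity via the adjunction $a^\RIGHT \leq b \iff a \leq b^\Left$ (resp.\ $a^\Right \leq b \iff a \leq b^\LEFT$), and the observation that (FS1), (FS2) are valid because H2GC+FS-algebras satisfy (fs1), (fs2) — your extra appeal to Proposition~\ref{Prop:DunnConditions} to cover (FS3), (FS4) is a reasonable way to close a step the paper leaves implicit. One small slip in your closing remark: the Galois connections are $({^\RIGHT},{^\Left})$ and $({^\Right},{^\LEFT})$, pairing $\UP$ with $\Down$ and $\Up$ with $\DOWN$ — not $({^\RIGHT},{^\LEFT})$ pairing $\UP$ with $\DOWN$ — but your actual derivation uses the correct pairing, so nothing is affected.
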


\begin{proof} The proof concerning intuitionistic logic is standard 
(see \cite{RasSik68}, for instance). As we have proved in \cite{DzJaKo10} for
{\sf IntGC}, rules (GC\,${\Down}{\UP}$), (GC\,${\UP}{\Down}$), 
(GC\,${\DOWN}{\Up}$), (GC\,${\Up}{\DOWN}$) preserve validity. Thus, (a) holds. 
For (b), it is clear that axioms (FS1) and (FS2) are valid, because H2GC+FS-algebras
satisfy identities (fs1) and (fs2).
\end{proof}

To obtain completeness, we  apply Lindenbaum--Tarski algebras. 
We denote by $\mathbf{\Phi}$ the \emph{algebra of $\Phi$-formulas}, that is, 
the abstract algebra 
\[ \mathbf{\Phi} = (\Phi,\vee,\wedge,\to,\bot,{\UP},{\DOWN},{\Up}, {\Down}).\]
We define two equivalences $\equiv_1$ and $\equiv_2$ on $\Phi$:
\begin{align*}
A \equiv_1 B & \text{ if and only if } A \leftrightarrow B \text{ is provable in {\sf Int2GC}};\\
A \equiv_2 B & \text{ if and only if } A \leftrightarrow B \text{ is provable in {\sf Int2GC+FS}}.
\end{align*}

Concerning $\vee$, $\wedge$, and $\to$, the next result is known from the theory of
intuitionistic logic, and for  ${\UP}$, ${\DOWN}$, ${\Up}$, and ${\Down}$ the claim follows 
from monotonicity.

\begin{lemma}\label{Lem:Congruence}
The equivalences $\equiv_1$ and $\equiv_2$ are congruences on $\mathbf{\Phi}$.
\end{lemma}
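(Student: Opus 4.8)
The plan is to show that $\equiv_1$ and $\equiv_2$ are equivalence relations compatible with all the operations of $\mathbf{\Phi}$. Reflexivity, symmetry, and transitivity follow immediately from the corresponding provable intuitionistic tautologies $\vdash A \leftrightarrow A$, $\vdash (A \leftrightarrow B) \to (B \leftrightarrow A)$, and $\vdash (A \leftrightarrow B) \to ((B \leftrightarrow C) \to (A \leftrightarrow C))$ together with modus ponens, which are available in both {\sf Int2GC} and {\sf Int2GC+FS}. So the substance of the proof is the compatibility with the operations $\vee$, $\wedge$, $\to$, ${\UP}$, ${\DOWN}$, ${\Up}$, ${\Down}$.

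For the connectives $\vee$, $\wedge$, $\to$, I would simply invoke the classical fact from intuitionistic propositional logic: if $\vdash A \leftrightarrow A'$ and $\vdash B \leftrightarrow B'$, then $\vdash (A \circ B) \leftrightarrow (A' \circ B')$ for $\circ \in \{\vee, \wedge, \to\}$ (and likewise for $\neg$), which is stated in the excerpt to be known. Since both {\sf Int2GC} and {\sf Int2GC+FS} extend intuitionistic propositional logic, this transfers verbatim to $\equiv_1$ and $\equiv_2$. For the four unary operators, suppose $\vdash A \leftrightarrow B$, i.e.\ both $\vdash A \to B$ and $\vdash B \to A$. Applying the admissible monotonicity rules — (RM$\UP$) and (RM$\Down$) for {\sf IntGC}, and (RM$\Up$), (RM$\DOWN$) for {\sf Int2GC}, all of which are recalled in Section~\ref{Sec:Axiomatizations} — to each direction yields $\vdash \UP A \to \UP B$ and $\vdash \UP B \to \UP A$, hence $\vdash \UP A \leftrightarrow \UP B$, and similarly for ${\DOWN}$, ${\Up}$, ${\Down}$. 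This is exactly the remark made just before the lemma statement: "for ${\UP}$, ${\DOWN}$, ${\Up}$, and ${\Down}$ the claim follows from monotonicity." The same argument works for $\equiv_2$ because every monotonicity rule admissible in {\sf Int2GC} remains admissible in the extension {\sf Int2GC+FS}.

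There is essentially no hard part here; the lemma is a routine verification and its only role is to license the formation of the Lindenbaum--Tarski quotients $\mathbf{\Phi}/{\equiv_1}$ and $\mathbf{\Phi}/{\equiv_2}$ in the completeness proof that follows. If anything needs a word of care, it is making sure that the monotonicity rules are genuinely rules of the logic (admissible, hence usable inside derivations) rather than merely schemes of provable implications, but this is already established in the preceding development and in \cite{DzJaKo10}. I would therefore present the proof as: (1) cite the intuitionistic case for $\vee, \wedge, \to, \neg$; (2) handle ${\UP}, {\DOWN}, {\Up}, {\Down}$ via the monotonicity rules as above; (3) observe that nothing in the argument used the Fischer Servi axioms, so it applies identically to $\equiv_2$.

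\begin{proof}
Reflexivity, symmetry, and transitivity of $\equiv_1$ and $\equiv_2$ follow from the provability of $A \leftrightarrow A$, $(A \leftrightarrow B) \to (B \leftrightarrow A)$, and $(A \leftrightarrow B) \to ((B \leftrightarrow C) \to (A \leftrightarrow C))$ in intuitionistic propositional logic, together with modus ponens. For compatibility with $\vee$, $\wedge$, $\to$, and $\neg$, we use the standard fact that in intuitionistic propositional logic, $\vdash A \leftrightarrow A'$ and $\vdash B \leftrightarrow B'$ imply $\vdash (A \circ B) \leftrightarrow (A' \circ B')$ for each binary connective $\circ$ and $\vdash \neg A \leftrightarrow \neg A'$. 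For the unary operators, assume $A \equiv_1 B$, so $\vdash A \to B$ and $\vdash B \to A$ in {\sf Int2GC}. Applying (RM$\UP$) to both gives $\vdash \UP A \to \UP B$ and $\vdash \UP B \to \UP A$, whence $\UP A \equiv_1 \UP B$; applying (RM$\Down$), (RM$\Up$), and (RM$\DOWN$) in the same way yields $\DOWN A \equiv_1 \DOWN B$, $\Up A \equiv_1 \Up B$, and $\DOWN A \equiv_1 \DOWN B$. Thus $\equiv_1$ is a congruence on $\mathbf{\Phi}$. The argument for $\equiv_2$ is identical, since all these monotonicity rules remain admissible in {\sf Int2GC+FS}; note that the Fischer Servi axioms play no role.
\end{proof}
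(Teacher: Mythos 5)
Your proof is correct and follows exactly the route the paper takes: the paper offers no separate argument beyond the remark that compatibility with $\vee$, $\wedge$, $\to$ is known from intuitionistic logic and that the unary operators are handled by the admissible monotonicity rules, which is precisely what you spell out. (Only a typographical slip: applying (RM$\Down$) should yield $\Down A \equiv_1 \Down B$, not $\DOWN A \equiv_1 \DOWN B$.)
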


For any $A \in \Phi$, we denote by $[A]_1$ and $[A]_2$ the congruence class of $A$
with respect to the congruences $\equiv_1$ and $\equiv_2$. The sets of $\equiv_1$- and
$\equiv_2$-classes are denoted by $\Phi/\!\!\equiv_1$ and $\Phi/\!\!\equiv_2$.
Next we define the \emph{quotient algebras} of $\mathbf{\Phi}$ with respect
to $\equiv_1$ and $\equiv_2$ by introducing the following operations on 
$\Phi/\!\!\equiv_i$ for $i = 1,2$:
\begin{align*}
{[A]_i} \vee_i [B]_i &=  [A \vee B]_i         &{[A]_i} \wedge_i [B]_i  &=  [A \wedge B]_i \\
{[A]_i} \to_i [B]_i  &=  [A \to B]_i          &\mathbf{0}_i     &= [\perp]_i  \\
{[A]_i}^{\RIGHT_i}      &=  [\UP A]_i            &{[A]_i}^{\LEFT_i} & = [\DOWN A]_i \\
{[A]_i}^{\Right_i}      &=  [\Up A]_i            &{[A]_i}^{\Left_i} & = [\Down A]_i 
\end{align*}

As we have noted, H2GC- and H2GC+FS-algebras form equational classes. By 
the theory of intuitionistic logic, $\Phi/{\equiv_1}$ satisfies the identities defining
Heyting algebras. Also (gc1) and (gc2) hold by (GC1), (GC1)$^*$, (GC4), (GC4)$^*$. 
Since also identities (fs1) and (fs2) are the counterparts of axioms (FS1) and (FS2), we
may write the following proposition.

\begin{proposition}\label{Prop:HLG-algebra}
\text{ }
\begin{enumerate}[\rm (a)]
 \item The algebra $(\Phi/{\equiv_1}, \vee_1, \wedge_1, \to_1, \mathbf{0}_1, {^{\RIGHT_1}}, {^{\LEFT_1}}, {^{\Right_1}}, 
       {^{\Left_1}})$ is an H2GC-algebra.
 \item The algebra $(\Phi/{\equiv_2}, \vee_2, \wedge_2, \to_2, \mathbf{0}_2, {^{\RIGHT_2}}, {^{\LEFT_2}}, {^{\Right_2}}, 
       {^{\Left_2}})$ is an H2GC+FS-algebra.
\end{enumerate}
\end{proposition}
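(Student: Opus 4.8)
The plan is to verify that the Lindenbaum--Tarski construction produces algebras in the required equational classes, treating parts (a) and (b) in parallel. First I would recall that, by the theory of intuitionistic propositional logic, $(\Phi/{\equiv_i}, \vee_i, \wedge_i, \to_i, \mathbf{0}_i)$ is a Heyting algebra; this is entirely standard (see \cite{RasSik68}) and follows because the Heyting identities (h1)--(h4) correspond to theorems of intuitionistic logic, provably equivalent formulas being identified by $\equiv_i$. Lemma~\ref{Lem:Congruence} guarantees that the operations $\vee_i, \wedge_i, \to_i, {^{\RIGHT_i}}, {^{\LEFT_i}}, {^{\Right_i}}, {^{\Left_i}}$ are well-defined on congruence classes, so there is nothing to check on that front.

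Next I would show that $({^{\RIGHT_i}}, {^{\LEFT_i}})$ and $({^{\Right_i}}, {^{\LEFT_i}})$ are Galois connections on the Heyting algebra $\Phi/{\equiv_i}$, i.e.\ that identities (gc1) and (gc2) hold. This is where the provable formulas listed for {\sf Int2GC} come in: (gc1) for $({^{\RIGHT_i}}, {^{\LEFT_i}})$ is exactly the statement that $[\UP(A \vee B)]_i = [\UP A \vee \UP B]_i$ and $[\DOWN(A \wedge B)]_i = [\DOWN A \wedge \DOWN B]_i$, which is precisely (GC4); and (gc2), rewritten in Heyting form as (gc2)$^*$, says $[A \to \DOWN \UP A]_i = \mathbf{1}_i$ and $[\UP \DOWN A \to A]_i = \mathbf{1}_i$, which is (GC1). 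The same argument using (GC4)$^\star$ and (GC1)$^\star$ handles the second pair $({^{\Right_i}}, {^{\LEFT_i}})$. This establishes part (a): $(\Phi/{\equiv_1}, \dots)$ is an H2GC-algebra.

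For part (b), I would observe that an H2GC+FS-algebra is, by definition, an H2GC-algebra additionally satisfying identities (fs1) and (fs2). Since $\Phi/{\equiv_2}$ is already an H2GC-algebra by the argument of part (a) (the provability relation for {\sf Int2GC+FS} extends that of {\sf Int2GC}, so all the {\sf Int2GC} theorems used above remain available), it remains only to check (fs1) and (fs2). But (fs1) is $[(A \to B)^\RIGHT \to (A^\LEFT \to B^\RIGHT)]_2 = \mathbf{1}_2$, which unwinds under the definitions of the quotient operations to $[\UP(A \to B) \to (\DOWN A \to \UP B)]_2 = \mathbf{1}_2$, i.e.\ the provability of axiom (FS1) in {\sf Int2GC+FS}; similarly (fs2) corresponds to (FS2). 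Both hold by definition of {\sf Int2GC+FS}.

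I do not expect any genuine obstacle here: the statement is essentially a dictionary translation between syntactic theorems and algebraic identities, and the bulk of the content (the Heyting part, and the congruence property) is either classical or already recorded as Lemma~\ref{Lem:Congruence}. The only point requiring a little care is making the correspondence between the logical operators $\UP, \DOWN, \Up, \Down$ and the algebraic operators ${^\RIGHT}, {^\LEFT}, {^\Right}, {^\Left}$ precise — in particular getting the pairing right so that (GC1)/(GC4) land on $({^\RIGHT}, {^\LEFT})$ and (GC1)$^\star$/(GC4)$^\star$ on $({^\Right}, {^\LEFT})$ — but this is exactly the bookkeeping indicated in the footnote after the definition of the valuation, and once it is set up correctly both parts follow immediately.
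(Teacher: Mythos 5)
Your proposal follows the paper's own argument exactly: the quotient is a Heyting algebra by the standard Lindenbaum--Tarski theory of intuitionistic logic, the Galois-connection identities (gc1) and (gc2) are read off from (GC1), (GC4) and their starred versions, the (fs1), (fs2) identities come from axioms (FS1), (FS2) in the case of $\equiv_2$, and Lemma~\ref{Lem:Congruence} supplies well-definedness. The one thing to correct is your pairing of the operators: throughout you write the first Galois connection as $({^{\RIGHT_i}},{^{\LEFT_i}})$ and invoke $[A \to \DOWN\UP A]_i = \mathbf{1}_i$, but the adjoint of ${^{\RIGHT_i}} = [\UP\,\cdot\,]_i$ is ${^{\Left_i}} = [\Down\,\cdot\,]_i$, not ${^{\LEFT_i}} = [\DOWN\,\cdot\,]_i$; the formula $A \to \DOWN\UP A$ mixes the two connections and is not provable in {\sf Int2GC}. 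The correct pairs are $({^{\RIGHT_i}},{^{\Left_i}})$, justified by (GC1) and (GC4) (which involve $\UP$ and $\Down$), and $({^{\Right_i}},{^{\LEFT_i}})$, justified by (GC1)$^\star$ and (GC4)$^\star$ (which involve $\Up$ and $\DOWN$). With that bookkeeping fixed, your argument is complete and coincides with the paper's.
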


We define two valuations $v_1 \colon \mathit{Var} \to \Phi /\!\!\equiv_1$ and  
$v_2 \colon \mathit{Var}  \to \Phi /\!\!\equiv_2$ by:
\[
   v_1(p) = [p]_1 \text{ \ and \ }  v_2(p) = [p]_2.
\]
By a straightforward formula induction we see that $v_1(A) = [A]_1$ and  $v_2(A) = [A]_2$ for all formulas $A \in \Phi$.
We can now write the following results.

\begin{lemma} \label{Lem:Truth}
For any formula $A \in \Phi$:
\begin{enumerate}[\rm (a)]
 \item $A$ is provable in {\sf Int2GC} if and only if $v_1(A) = \mathbf{1}$.
 \item $A$ is provable in {\sf Int2GC+FS} if and only if $v_2(A) = \mathbf{1}$.
\end{enumerate}
\end{lemma}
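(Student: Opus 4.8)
The plan is to prove Lemma~\ref{Lem:Truth} by combining the soundness result (Theorem~\ref{Thm:SoundnessI}) with the Lindenbaum--Tarski construction just assembled. I will prove only part~(a); part~(b) follows by replacing $\equiv_1$, $v_1$, {\sf Int2GC}, and H2GC-algebras by $\equiv_2$, $v_2$, {\sf Int2GC+FS}, and H2GC+FS-algebras throughout, using Proposition~\ref{Prop:HLG-algebra}(b) in place of~(a). The two directions of the equivalence are handled separately.

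For the forward direction, suppose $A$ is provable in {\sf Int2GC}. By Proposition~\ref{Prop:HLG-algebra}(a), the quotient algebra $(\Phi/{\equiv_1},\vee_1,\wedge_1,\to_1,\mathbf 0_1,{^{\RIGHT_1}},{^{\LEFT_1}},{^{\Right_1}},{^{\Left_1}})$ is an H2GC-algebra, and $v_1$ is a valuation on it. By Theorem~\ref{Thm:SoundnessI}(a), every provable {\sf Int2GC}-formula is valid in all H2GC-algebras under all valuations; in particular $v_1(A)=\mathbf 1$, where $\mathbf 1$ is the top element $\mathbf 0_1^{\,\RIGHT_1}\!\to_1\mathbf 0_1$ (equivalently $[\top]_1$) of this Heyting algebra. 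This gives one implication directly.

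For the converse, suppose $v_1(A)=\mathbf 1$. By the straightforward formula induction already noted, $v_1(A)=[A]_1$, so $[A]_1=\mathbf 1=[\top]_1$, which by definition of $\equiv_1$ means that $A\leftrightarrow\top$ is provable in {\sf Int2GC}. Since $\top$ is provable (it is $p\to p$) and $A\leftrightarrow\top$ yields $\top\to A$ as a conjunct, an application of modus ponens shows that $A$ is provable in {\sf Int2GC}. (One should check here that $\mathbf 1$ in the quotient algebra is indeed $[\top]_1$: the top element of the Heyting algebra $\Phi/{\equiv_1}$ is $[\top]_1$ because $\top$ is provable, hence $[\top]_1 \to_1 [B]_1 = [\top \to B]_1 = [B]_1$ for every $B$, using that $\top \to B \leftrightarrow B$ is intuitionistically provable.)

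I do not expect any genuine obstacle here: both directions are essentially bookkeeping on top of results already in place. The only point requiring a moment's care is the identification of the algebraic top element $\mathbf 1$ with $[\top]_1$ and the translation between $v_1(A)=\mathbf 1$, the provability of $A\leftrightarrow\top$, and the provability of $A$ itself --- all of which are standard features of the Lindenbaum--Tarski method for intuitionistic-based logics. The argument is uniform in the two parts, so writing ``part~(b) is proved analogously'' after part~(a) is legitimate.
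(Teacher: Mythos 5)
Your proof is correct: the paper states this lemma without an explicit proof, and your argument is precisely the standard Lindenbaum--Tarski bookkeeping the authors leave implicit, namely soundness (Theorem~\ref{Thm:SoundnessI}) together with Proposition~\ref{Prop:HLG-algebra} for the forward direction, and the identity $v_1(A)=[A]_1$ plus the identification $\mathbf{1}=[\top]_1$ and modus ponens for the converse. The only cosmetic quirk is writing the top element as $\mathbf{0}_1^{\RIGHT_1}\to_1\mathbf{0}_1$ rather than simply $\mathbf{0}_1\to_1\mathbf{0}_1$, which is harmless since $\mathbf{0}_1^{\RIGHT_1}=\mathbf{0}_1$.
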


\begin{theorem}[\bf Completeness~I] \label{Thm:CompletenessI}
For any formula $A \in \Phi$:
\begin{enumerate}[\rm (a)]
 \item $A$ is provable in {\sf Int2GC} if and only if $A$ valid in H2GC-algebras.
 \item $A$ is provable in {\sf Int2GC+FS} if and only if $A$ valid in H2GC+FS-algebras.
\end{enumerate}
\end{theorem}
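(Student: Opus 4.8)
\textbf{Proof proposal for Theorem~\ref{Thm:CompletenessI}.}
The plan is to combine the soundness result (Theorem~\ref{Thm:SoundnessI}) with the Lindenbaum--Tarski construction developed in Lemmas~\ref{Lem:Congruence}--\ref{Lem:Truth} and Proposition~\ref{Prop:HLG-algebra}. The two directions for each of (a) and (b) run in opposite ways: soundness gives the forward implication, and the quotient algebra gives the converse. Since (a) and (b) are entirely parallel, I would write out (a) in full and remark that (b) follows by replacing $\equiv_1$, $v_1$, {\sf Int2GC}, and H2GC-algebras by $\equiv_2$, $v_2$, {\sf Int2GC+FS}, and H2GC+FS-algebras throughout.

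For the forward implication of (a), suppose $A$ is provable in {\sf Int2GC}. Then by Theorem~\ref{Thm:SoundnessI}(a), $A$ is valid in every H2GC-algebra, which is exactly the conclusion. For the converse, suppose $A$ is valid in all H2GC-algebras. By Proposition~\ref{Prop:HLG-algebra}(a), the quotient algebra $(\Phi/{\equiv_1}, \vee_1, \wedge_1, \to_1, \mathbf{0}_1, {^{\RIGHT_1}}, {^{\LEFT_1}}, {^{\Right_1}}, {^{\Left_1}})$ is itself an H2GC-algebra, so in particular $A$ must be valid in it. Evaluating $A$ under the specific valuation $v_1(p) = [p]_1$, validity forces $v_1(A) = \mathbf{1}$. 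But then Lemma~\ref{Lem:Truth}(a) yields that $A$ is provable in {\sf Int2GC}, completing the equivalence.

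The argument for (b) is identical in structure: the forward direction is Theorem~\ref{Thm:SoundnessI}(b), and the converse uses Proposition~\ref{Prop:HLG-algebra}(b) (that $\Phi/{\equiv_2}$ is an H2GC+FS-algebra), the valuation $v_2$, and Lemma~\ref{Lem:Truth}(b). Strictly speaking there is nothing left to prove here beyond assembling these pieces, since all the real work has been distributed into the preceding lemmas --- the congruence property, the fact that the quotients land in the right equational classes (which rests on (GC1), (GC1)$^\star$, (GC4), (GC4)$^\star$ for the Galois-connection identities and on (fs1), (fs2) matching (FS1), (FS2) in the H2GC+FS case), and the truth lemma. Consequently I do not anticipate any genuine obstacle; the only point requiring a modicum of care is making sure the extended valuation $v_i$ really does satisfy $v_i(A) = [A]_i$ for all $A \in \Phi$, including the modal clauses $v_i(\UP A) = [A]_i^{\RIGHT_i} = [\UP A]_i$ and its three companions, which is a routine formula induction already flagged before the statement of Lemma~\ref{Lem:Truth}.
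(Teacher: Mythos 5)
Your proposal is correct and follows exactly the paper's own argument: soundness (Theorem~\ref{Thm:SoundnessI}) for the forward direction, and the Lindenbaum--Tarski quotient $\Phi/{\equiv_i}$ together with the valuation $v_i$ and Lemma~\ref{Lem:Truth} for the converse. Nothing is missing; the paper's proof is merely a terser version of the same assembly of the preceding lemmas.
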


\begin{proof}
(a) Suppose that $A$ is valid in H2GC-algebras. We have $v_1(A) = \mathbf{1}$ in $\Phi/{\equiv_1}$, 
that is, $A$ is provable. The other direction is proved in Theorem~\ref{Thm:SoundnessI}. For (b),
the proof is basically the same.
\end{proof}

Clearly, rough $\mathbb{H}$-sets considered in Section~\ref{sSec:FuzzyMotivation} 
provide algebraic models for Int2GC+FS.
Let us introduce axioms (D1) and (D2) corresponding to equations (d1) and (d2):
\begin{enumerate}[({D}1)]
 \item $\UP A \wedge \DOWN B \to \UP(A \wedge B)$ 
 \item $\Up A \wedge \Down B \to \Up(A \wedge B)$
\end{enumerate}
By  Proposition~\ref{Prop:DunnConditions} and Theorem~\ref{Thm:CompletenessI}, we can write the following 
corollary giving additional ways to axiomatize {\sf Int2GC+FS}.
\begin{corollary} \label{Cor:Axiomatisation} 
\[ \mbox{\sf Int2GC+FS} = {\sf Int2GC} + \{ {\rm (FS1)} \ {\rm or} \ {\rm (FS4)} \ {\rm or} \ {\rm (D1)} \}
+ \{ {\rm (FS2)}  \ {\rm or} \ {\rm (FS3)} \ {\rm or} \ {\rm (D2)} \} \]
\end{corollary}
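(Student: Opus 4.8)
The plan is to deduce Corollary~\ref{Cor:Axiomatisation} by combining the purely algebraic Proposition~\ref{Prop:DunnConditions} with the completeness result Theorem~\ref{Thm:CompletenessI}, using the standard bridge between provability in the logic and satisfaction of identities in the corresponding equational class. Concretely, I would argue that each of the three alternative axioms in the first bracket---(FS1), (FS4), (D1)---extends \textsf{Int2GC} to \emph{the same} logic, namely \textsf{Int2GC+FS}, and symmetrically for the second bracket with (FS2), (FS3), (D2).

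First I would observe that, by Theorem~\ref{Thm:CompletenessI}(a), \textsf{Int2GC} is sound and complete with respect to the equational class of H2GC-algebras, so adding an axiom $\alpha$ to \textsf{Int2GC} yields a logic that is sound and complete with respect to the subclass of H2GC-algebras validating the identity $\tilde\alpha$ corresponding to $\alpha$ (this is the routine translation: $v(\alpha)=1$ in every H2GC-algebra satisfying $\tilde\alpha$, via the Lindenbaum--Tarski construction of Proposition~\ref{Prop:HLG-algebra} applied to the extended logic). Hence two extensions \textsf{Int2GC}$+\alpha$ and \textsf{Int2GC}$+\beta$ prove exactly the same formulas if and only if the H2GC-algebras validating $\tilde\alpha$ are exactly those validating $\tilde\beta$. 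Now the axioms (FS1), (FS4), (D1) correspond, under the $90^\circ$-clockwise dictionary of the footnote, precisely to the identities (fs1), (fs4), (d1); Proposition~\ref{Prop:DunnConditions}(a) states that these three identities are equivalent on every H2GC-algebra, so they cut out the same subclass---which by definition is the class of H2GC+FS-algebras. The same reasoning with Proposition~\ref{Prop:DunnConditions}(b) handles (FS2), (FS3), (D2) and the identities (fs2), (fs3), (d2).

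Putting the two brackets together: by Proposition~\ref{Prop:FSEquiv} and the definition immediately following it, \textsf{Int2GC+FS} is \textsf{Int2GC} together with one axiom from $\{$(FS1),(FS4)$\}$ and one from $\{$(FS2),(FS3)$\}$; the present corollary merely enlarges each choice set by (D1) and (D2) respectively. Since adding (D1) in place of (FS1) yields a logic complete with respect to the same algebra class (H2GC-algebras satisfying (d1)=(fs1)) as \textsf{Int2GC}$+$(FS1), and likewise for (D2), the eight resulting combinations all axiomatise a logic with the same theorems, namely \textsf{Int2GC+FS}. This gives exactly the displayed equality.

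The only point requiring a little care---and the main (mild) obstacle---is making explicit that completeness lifts to axiomatic extensions: one must check that the Lindenbaum--Tarski algebra of \textsf{Int2GC}$+$(D1), say, is an H2GC-algebra satisfying (d1), which is immediate since (D1) being a theorem forces $[{\UP}A]\wedge[{\DOWN}B]\to[{\UP}(A\wedge B)]=\mathbf 1$, i.e. the identity (d1) holds in the quotient. Everything else is a direct appeal to results already established in the excerpt, so no further calculation is needed.
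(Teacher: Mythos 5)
Your proposal is correct and follows essentially the same route as the paper, which derives the corollary in one line from Proposition~\ref{Prop:DunnConditions} together with the algebraic completeness Theorem~\ref{Thm:CompletenessI}; you have merely spelled out the standard bridge (soundness of each axiomatic extension with respect to the subclass of H2GC-algebras validating the corresponding identity, plus the Lindenbaum--Tarski quotient satisfying that identity) that the paper leaves implicit. No gaps.
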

By Example~\ref{Exa:NotDunn2} and Theorem~\ref{Thm:CompletenessI}, 
formulas $\DOWN(A \vee B) \to \DOWN A \vee \UP B$ and $\Down(A \vee B) \to \Down A \vee \Up B$ 
corresponding to Dunn's condition (D$_\vee$) are not provable in {\sf Int2GC+FS}.

\section{Kripke semantics and completeness}
\label{Sec:Kripke}

\subsection{Kripke frames}
\label{sSec:Frames}

In this section we consider Kripke frames and models for the three systems {\sf IntGC}, {\sf Int2GC},
and {\sf Int2GC+FS}, where the system {\sf Int2GC+FS} will be shown in Section~\ref{Sec:IKt} to be equivalent 
to intuitionistic temporal logic {\sf IK$_t$}. 

\medskip\noindent%
{\bf IntGC-frames.} \ A structure $\mathcal{F} = (X, \le, R)$ is called a \emph{Kripke frame of {\sf IntGC}}
(an {\sf IntGC}-{\em frame}, in short) \cite{DzJaKo10}, if $X$ is a non-empty set, 
$\le$ is a preorder (reflexive and transitive binary relation) on $X$, and $R$ is a 
relation on $X$ such that
\begin{equation} \tag{R1} \label{Eq:frame} 
({\ge} \circ R \circ {\ge}) \subseteq R .
\end{equation}

Let $v$ be a function $v \colon P \to \wp(X)$ assigning to each propositional variable $p$ 
a subset $v(p)$ of $X$ with the property that $x\in v(p)$ and $x\le y$ imply $y\in v(p)$,
that is, $v(p)$ is $\le$-closed. Such functions are called \emph{valuations} and the pair 
$\mathcal{M} = (\mathcal{F}, v)$ is called an {\sf IntGC}-\emph{model}.
For any $x \in X$ and $A \in \Phi$, we define the \emph{satisfiability relation} in
$\mathcal{M}$ inductively by the following way:
\begin{align*}
x \models p &\iff x \in v(p), \\
x \models A \wedge B &\iff x \models A \mbox{ and } x \models A, \\
x \models A \vee B &\iff x \models A \mbox{ or } x \models A, \\
x \models A \to B &\iff \mbox{ for all } y \geq x, \  y \models A \mbox{ implies }  y \models B, \\
x \models \neg A &\iff \mbox{ for no }y \geq x \mbox{ does }  y  \models A, \\
x \models \UP A &\iff \mbox{ exists } y \mbox{ such that } x \, R \, y \mbox{ and }  y \models  A, \mbox{ and}\\
x \models \Down A &\iff \mbox{ for all } y, y \, R \, x \mbox{ implies }  y \models  A.
\end{align*}
Note that the satisfiability relation $\models$ is \emph{persistent}, that is, for all
formulas $A$, if $x \models A$ and $x \leq y$, then $y \models A$. 
An {\sf IntGC}-formula $A$ is \emph{valid in a model} $\mathcal{M}$, if $x \models A$ for all $x \in X$. 
The formula $A$ is \emph{valid in a frame} $\mathcal{F}$, if $A$ is valid in every model based on $\mathcal{F}$. 
A formula is \emph{Kripke-valid} if it is valid in every frame.

We noted in \cite{DzJaKo10} that {\sf IntGC} is \emph{Kripke-sound}, that is, every provable
{\sf IntGC}-formula is Kripke-valid. We also proved Kripke-completeness by applying canonical
frames, and next we shortly recall these constructions, because a similar technique will be
used later in cases of {\sf Int2GC} and {\sf Int2GC+FS}.

For an HGC-algebra $(\mathbb{H},{^\RIGHT},{^\Left})$, its \emph{canonical frame} is a
triple $(X_H, \subseteq, R^H)$ such that $X_H$ is the set of the prime filters of the
lattice $H$ and the relation $R^H$ is defined by
\[ (x,y) \in R^H \iff  y \subseteq [x]^{\RIGHT^{-1}},\]
where $[x]^{\RIGHT^{-1}} = \{ a \in H \mid a^\RIGHT \in x\}$. The relation $R^H$
can be described also in terms of the map $^\Left$ by
\[ (x,y) \in R^H \iff [y]^{\Left^{-1}} \subseteq x ,\]
where $[y]^{\Left^{-1}} = \{ a \in H \mid a^\Left \in y\}$.

For a Heyting algebra $\mathbb{H}$, we denote by $\mathcal{O}(H)$ the set of all $\le$-closed subsets. 

\begin{lemma} \label{Lem:InducedGalois}
Let $(\mathbb{H},{^\RIGHT},{^\Left})$ be an HGC-algebra.
The pair $\big ( {^{\Left^{-1}}},{^{\RIGHT^{-1}}} \big )$ is a Galois connection on $(\mathcal{O}(H),\subseteq)$.
\end{lemma}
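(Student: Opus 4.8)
The plan is to verify directly from the definition of Galois connection that the pair $\big({^{\Left^{-1}}},{^{\RIGHT^{-1}}}\big)$ satisfies the defining adjunction $U^{\Left^{-1}} \subseteq V \iff U \subseteq V^{\RIGHT^{-1}}$ for all $\le$-closed sets $U,V \in \mathcal{O}(H)$. First I would check that these two operations actually map $\mathcal{O}(H)$ into itself: given a $\le$-closed set $U$, the set $U^{\RIGHT^{-1}} = \{a \in H \mid a^\RIGHT \in U\}$ must be shown to be $\le$-closed, which follows because $^\RIGHT$ is order-preserving (part of the Galois connection $({^\RIGHT},{^\Left})$ being a Galois connection on $\mathbb{H}$) and $U$ is $\le$-closed; similarly for $U^{\Left^{-1}}$ using that $^\Left$ is order-preserving. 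So both operations are well-defined maps $\mathcal{O}(H) \to \mathcal{O}(H)$.

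Next I would establish the adjunction itself. Unfolding definitions, $U^{\Left^{-1}} \subseteq V$ means: for all $a \in H$, $a^\Left \in U$ implies $a \in V$. On the other side, $U \subseteq V^{\RIGHT^{-1}}$ means: for all $b \in H$, $b \in U$ implies $b^\RIGHT \in V$. The link between these two statements is exactly the algebraic Galois connection $a^\RIGHT \le b \iff a \le b^\Left$ on $\mathbb{H}$, together with the fact that $U$ and $V$ are $\le$-closed. For one direction, assuming $U^{\Left^{-1}} \subseteq V$ and $b \in U$, I want $b^\RIGHT \in V$; using $b \le (b^\RIGHT)^\Left$ (property (i) of Galois connections, $p \le \psi(\varphi(p))$) and $\le$-closedness of $U$, we get $(b^\RIGHT)^\Left \in U$, hence $b^\RIGHT \in U^{\Left^{-1}} \subseteq V$. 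The reverse direction is symmetric, using $(a^\Left)^\RIGHT \le a$ (property (i), $\varphi(\psi(q)) \le q$) and $\le$-closedness of $V$.

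Since the adjunction has been verified, the pair is a Galois connection on $(\mathcal{O}(H),\subseteq)$ by definition, and no further work (such as separately checking monotonicity or the unit/counit inequalities) is needed. Alternatively, one could invoke the equivalent characterisation via conditions (i) and (ii): $U \subseteq (U^{\Left^{-1}})^{\RIGHT^{-1}}$ and $(V^{\RIGHT^{-1}})^{\Left^{-1}} \subseteq V$ for all $U,V \in \mathcal{O}(H)$, plus order-preservation of both set-operations under $\subseteq$ — all of which again reduce to the corresponding properties of $({^\RIGHT},{^\Left})$ on $\mathbb{H}$ combined with $\le$-closedness. I expect the only mild subtlety is remembering to use $\le$-closedness at precisely the points where one transports membership along the unit/counit inequalities of the algebraic Galois connection; everything else is a routine unwinding of definitions.
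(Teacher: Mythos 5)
Your proof is correct and follows essentially the same route as the paper: both verify the adjunction $U^{\Left^{-1}} \subseteq V \iff U \subseteq V^{\RIGHT^{-1}}$ directly, using the inequalities $a \leq a^{\RIGHT\Left}$ and $a^{\Left\RIGHT} \leq a$ together with $\le$-closedness of the sets involved. The only difference is that you also check that the two operations map $\mathcal{O}(H)$ into itself, a well-definedness step the paper leaves implicit.
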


\begin{proof} Let $x,y \in \mathcal{O}(H)$. Suppose $[x]^{\Left^{-1}} \subseteq y$. If $a \in x$, then $a \leq a^{\RIGHT \Left}$ 
implies $a^{\RIGHT \Left} \in x$, because $x \in \mathcal{O}(H)$. 
This means that $a^\RIGHT \in [x]^{\Left^{-1}} \subseteq y$ and so $a \in [y]^{\RIGHT^{-1}}$.
Therefore, $x \subseteq {[y]^{\RIGHT^{-1}}}$.
Conversely, assume that $x \subseteq [y]^{\RIGHT^{-1}}$. If $a \in [x]^{\Left^{-1}}$, then 
$a^\Left \in x \subseteq [y]^{\RIGHT^{-1}}$, that is, $a^{\Left \RIGHT} \in y$. Because $a^{\Left \RIGHT} \leq a$,
we have $a \in y$, since $y \in \mathcal{O}(H)$. Thus, $[x]^{\LEFT^{-1}} \subseteq y$.
\end{proof}

Let $(\mathbb{H},{^\RIGHT},{^\Left})$ be an HGC-algebra. In \cite{DzJaKo10}, we showed that the canonical
frame $\mathcal{F}^H = (X_H, \subseteq, R^H)$ is an {\sf IntGC}-frame. Let $v \colon \mathrm{Var} \to H$
be a valuation on this HGC-algebra. We may now define a valuation $v^* \colon \mathrm{Var} \to X_H$
for the canonical frame $\mathcal{F}^H$ by setting $x \in v^*(p)$ if and only if $v(p) \in x$ for all
$p \in \mathrm{Var}$. Obviously, for all $x,y \in X_H$ and $p \in \mathrm{Var}$, 
$x \in v^*(p)$ and $x \subseteq y$ imply $y \in v^*(p)$, so $v^*$ is really a valuation. 
In the \emph{canonical model} $(\mathcal{F}^H,v^*)$, we have 
$x \models p$ if and only if $v(p) \in x$ for all $x \in X_H$. In \cite{DzJaKo10}, we proved by formula induction the \emph{Key Lemma}
stating that for any {\sf IntGC}-formula $A$ and $x \in X_H$, $x \models A$ if and only if $v(A) \in x$. 
This enabled us to prove the Kripke-completeness, that is, an {\sf IntGC}-formula is provable if and only 
if it is Kripke-valid. 

\medskip\noindent%
{\bf Int2GC-frames.} \  An {\sf Int2GC}-{\em frame}
(or a \emph{Kripke frame of\/ {\sf Int2GC}}) is a quadruple $\mathcal{F} = (X, \le, R_1,R_2)$ such that $X$ is a non-empty set, 
$\le$ is a preorder on $X$, and $R_1$ and $R_2$ are relations on $X$ satisfying
\begin{equation} \tag{R2} \label{Eq:frame2} 
({\ge} \circ R_1 \circ {\ge}) \subseteq R_1 
\end{equation}
\begin{equation} \tag{R3} \label{Eq:frame3} 
({\le} \circ R_2 \circ {\le}) \subseteq R_2 .
\end{equation}
Our next lemma is obvious.

\begin{lemma} \label{Lem:FrameConnection1}
$(X,\leq,R_1,R_2)$ is an {\sf Int2GC}-frame \ if and only if \ \ $(X,\leq,R_1)$ and $(X,\leq,{R_2}^{-1})$ are
{\sf IntGC}-frames. 
\end{lemma}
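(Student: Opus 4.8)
The plan is to unfold both definitions and observe that the two frame conditions (R2) and (R3) are literally the same statement applied to $R_1$ and to $R_2^{-1}$ respectively, so the claim reduces to a symmetry between $R_2$ and its converse. First I would recall that $(X,\leq,R_1)$ being an \textsf{IntGC}-frame means exactly that $\leq$ is a preorder and $({\geq}\circ R_1\circ{\geq})\subseteq R_1$, which is precisely condition (R2). So the only content left is to show that $({\leq}\circ R_2\circ{\leq})\subseteq R_2$ — condition (R3) — holds if and only if $({\geq}\circ R_2^{-1}\circ{\geq})\subseteq R_2^{-1}$ — the (R1)-type condition for $R_2^{-1}$.

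Next I would carry out this last equivalence by a direct computation with relational converses. The key identities are $(S\circ T)^{-1}=T^{-1}\circ S^{-1}$ and $({\leq})^{-1}={\geq}$. Applying these, $({\geq}\circ R_2^{-1}\circ{\geq})^{-1}={\leq}\circ R_2\circ{\leq}$, and a relation inclusion $A\subseteq B$ holds iff $A^{-1}\subseteq B^{-1}$. Hence $({\geq}\circ R_2^{-1}\circ{\geq})\subseteq R_2^{-1}$ iff $({\leq}\circ R_2\circ{\leq})\subseteq R_2$, which is exactly what is needed. Since the preorder $\leq$ is shared and unchanged, both the forward and backward directions of the lemma follow immediately by combining this with the observation about $R_1$.

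The only mildly technical point — and it is not really an obstacle — is to be careful that $\leq$ being a preorder for $(X,\leq,R_1)$ and for $(X,\leq,R_2^{-1})$ is the same requirement (it is, since $\leq$ does not change), and that non-emptiness of $X$ is likewise shared. So the whole argument is a short unwinding of definitions together with the elementary fact that taking converses reverses every relational composition and preserves inclusions; there is no genuinely hard step, which is why the excerpt calls the lemma obvious.
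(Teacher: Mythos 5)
Your proof is correct and is exactly the definition-unwinding the paper has in mind when it declares the lemma obvious and omits the proof: (R2) is literally the \textsf{IntGC}-frame condition for $R_1$, and the converse computation $({\geq}\circ R_2^{-1}\circ{\geq})^{-1}={\leq}\circ R_2\circ{\leq}$ together with the fact that taking converses preserves inclusions turns the (R1)-condition for $R_2^{-1}$ into (R3). Nothing is missing.
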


In {\sf Int2GC}-frames the valuations and the satisfiability relation $\models$ for $\vee$, $\wedge$, $\to$, and $\neg$ 
are defined as earlier, but satisfiability of formulas $\UP A$, $\Down A$, $\Up A$, and $\DOWN A$ are defined by 
\begin{align*}
x \models \UP A &\iff \mbox{ exists } y \mbox{ such that } x \, R_1 \, y \mbox{ and }  y \models  A, \\
x \models \Down A &\iff \mbox{ for all } y, y \, R_1 \, x \mbox{ implies }  y \models  A, \\
x \models \Up A &\iff \mbox{ exists } y \mbox{ such that } y \, R_2 \, x \mbox{ and }  y \models  A, \mbox{ and} \\
x \models \DOWN A &\iff \mbox{ for all } y, x \, R_2 \, y \mbox{ implies }  y \models  A.
\end{align*}
It is obvious that {\sf Int2GC} is \emph{Kripke-sound}, that is, every formula provable in
{\sf Int2GC} is Kripke valid.

We can introduce two \textsf{IntGC}-logics, one with the operators $\UP$ and $\Down$, and the other
with $\Up$ and $\DOWN$. We denote these by \textsf{IntGC$_1$} and \textsf{IntGC$_2$}, respectively.
Next we show that {\sf Int2GC} extends \textsf{IntGC$_1$} and \textsf{IntGC$_2$}.

\begin{lemma} Let $\mathcal{F}_i = (X,\leq,R_i)$ be a Kripke-frame for {\sf IntGC$_i$} and let 
$A_i$ be a well-formed formula of {\sf IntGC$_i$}, where $i = 1,2$.
Then, $A_i$ is valid in $\mathcal{F}_i$ if and only if $A_i$ is valid in $\mathcal{F} = (X,\leq,R_1,R_2^{-1})$. 
\end{lemma}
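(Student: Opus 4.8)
The plan is to reduce this statement to the already-established Kripke-soundness and Kripke-completeness of the single-connection logic \textsf{IntGC}. The key observation is that formulas of \textsf{IntGC$_i$} mention only the operators of the $i$-th Galois connection, so their satisfiability in an \textsf{Int2GC}-model depends only on the relevant relation. First I would fix $i = 1$ (the case $i = 2$ being symmetric, via the inverse relation, by exactly the argument packaged in Lemma~\ref{Lem:FrameConnection1}). By \eqref{Eq:frame2}, the frame $\mathcal{F}_1 = (X,\le,R_1)$ satisfies \eqref{Eq:frame}, so it is genuinely an \textsf{IntGC}-frame; likewise $(X,\le,R_2^{-1})$ is one by \eqref{Eq:frame3}, and the composite structure $\mathcal{F} = (X,\le,R_1,R_2^{-1})$ is an \textsf{Int2GC}-frame by Lemma~\ref{Lem:FrameConnection1} (note that here the second relation of the \textsf{Int2GC}-frame is $R_2^{-1}$, so its inverse $R_2$ is what must be an \textsf{IntGC}-relation --- this matches the hypothesis that $(X,\le,R_2)$ is an \textsf{IntGC$_2$}-frame).

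The heart of the argument is a claim comparing satisfiability: for every valuation $v$ on $X$ (the notion of valuation is identical in both frames, since it refers only to $\le$), every $x \in X$, and every well-formed \textsf{IntGC$_1$}-formula $A_1$, we have $x \models A_1$ in the model $(\mathcal{F}_1, v)$ if and only if $x \models A_1$ in the model $(\mathcal{F}, v)$. This is proved by a routine induction on the structure of $A_1$. The propositional cases ($\vee$, $\wedge$, $\to$, $\neg$, and variables) are immediate, because the satisfiability clauses for these connectives refer only to $\le$ and $v$, which are the same in both models. The operator cases are the only ones that could differ: but the clause for $x \models \UP A$ refers to $R_1$ in the \textsf{Int2GC}-model (by definition of $\models$ in \textsf{Int2GC}-frames) and to $R$ in the \textsf{IntGC}-model, and here $R = R_1$; similarly $x \models \Down A$ refers to $R_1$ in both. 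The relation $R_2$ plays no role, since $A_1$ contains no occurrence of $\Up$ or $\DOWN$. So the two satisfiability relations agree on all \textsf{IntGC$_1$}-formulas, and in particular $A_i$ is valid in every model over $\mathcal{F}_1$ if and only if it is valid in every model over $\mathcal{F}$, which is exactly the assertion.

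For the case $i = 2$, I would run the analogous induction, now using that $x \models \Up A$ in the \textsf{Int2GC}-model refers to the second relation of the \textsf{Int2GC}-frame --- which is $R_2^{-1}$ --- via ``exists $y$ with $y \mathbin{R_2^{-1}} x$'', i.e.\ ``exists $y$ with $x \mathbin{R_2} y$''; but in the \textsf{IntGC$_2$}-model $\mathcal{F}_2 = (X,\le,R_2)$, viewing $R_2$ itself as the \textsf{IntGC}-relation, the clause for the corresponding operator should be read through the translation that identifies the \textsf{IntGC$_2$} operators with $\Up,\DOWN$ and matches the clauses of Lemma~\ref{Lem:FrameConnection1}. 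This bookkeeping --- making sure the ``direction'' of each relation lines up with the correct satisfiability clause --- is the only place where care is needed; I expect it to be the main (mild) obstacle, and it is essentially resolved by Lemma~\ref{Lem:FrameConnection1}, which was stated precisely to make this identification transparent.
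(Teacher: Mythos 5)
Your proposal is correct and follows essentially the same route as the paper: a structural induction on formulas in which the propositional cases are immediate (they depend only on $\le$ and the valuation) and the modal cases are settled by unwinding $y\,R_2^{-1}\,x$ as $x\,R_2\,y$, exactly the computation the paper carries out for $\Up$. The opening remark about invoking soundness and completeness of \textsf{IntGC} is unnecessary — the argument you actually give is the direct semantic comparison, which is all that is needed.
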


\begin{proof} We prove the claim by formula induction. Concerning {\sf IntGC$_1$}-frames and -formulas,
the claim is obvious.

Let $v$ be a valuation for the frame  $\mathcal{F}_2 = (X,\leq,R_2)$. Thus,  
$\mathcal{M}_2 = (\mathcal{F}_2,v)$ is an {\sf IntGC$_2$}-model and 
$\mathcal{M} = (\mathcal{F},v)$ is an {\sf Int2GC}-model.

Let $A_2$ be a formula of {\sf IntGC$_2$} which is of the form $\Up A$ for some
{\sf IntGC$_2$}-formula $A$ having this property. Then, for all $x \in X$,
\begin{align*}
\mathcal{M}_2,x \models \Up A & \iff  (\exists y) \, x \, R_2 \, y \text{ and } \mathcal{M}_2,y \models A \\
& \iff  (\exists y) \, x \, R_2 \, y \text{ and } \mathcal{M},y \models A \\
& \iff  (\exists y) \, y \, {R_2}^{-1} \, x \text{ and } \mathcal{M},y \models A \\
& \iff \mathcal{M},x \models \Up A
\end{align*}
Therefore, $A_2$ is valid in $\mathcal{F}_2$ if and only if $A_2$ is valid in $\mathcal{F}$. The
claim concerning the operator $\DOWN$ can be proved analogously. 
\end{proof}

The \emph{canonical {\sf Int2GC}-frame} of an H2GC-algebra $(\mathbb{H},{^\RIGHT},{^\Left}, {^\Right},{^\LEFT})$
is a structure $(X_H,\subseteq, R_1^H, R_2^H)$, where $X_H$ is the set of lattice-filters of $H$ and the
relations $R_1^H$ and $R_2^H$ are defined by 
\[ (x,y) \in R_1^H \iff  y \subseteq [x]^{\RIGHT^{-1}} \text{ \quad and \quad }
  (x,y) \in R_2^H \iff  [x]^{\LEFT^{-1}}  \subseteq y
\]
where $[x]^{\LEFT^{-1}} = \{ a \in H \mid a^\LEFT \in x\}$. Equivalently, these relations can be defined as
\[ (x,y) \in R_1^H \iff [y]^{\Left^{-1}} \subseteq x \text{ \quad and \quad }
  (x,y) \in R_2^H \iff  x \subseteq [y]^{\Right^{-1}} 
\]
in which $[y]^{\Right^{-1}} = \{ a \in H \mid a^\Right \in y\}$.
The next lemma is obvious and its proof is omitted.

\begin{lemma} \label{Lem:InducedGalois2}
$\big ( {^{\Left^{-1}}},{^{\RIGHT^{-1}}} \big )$ and
$\big ( {^{\LEFT^{-1}}},{^{\Right^{-1}}} \big )$
are Galois connections on $(\mathcal{O}(H),\subseteq)$.
\end{lemma}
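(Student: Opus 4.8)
The statement to prove is Lemma~\ref{Lem:InducedGalois2}, which claims that the pairs $\big({^{\Left^{-1}}},{^{\RIGHT^{-1}}}\big)$ and $\big({^{\LEFT^{-1}}},{^{\Right^{-1}}}\big)$ are Galois connections on $(\mathcal{O}(H),\subseteq)$. The key observation is that this is not really new work: it follows immediately from Lemma~\ref{Lem:InducedGalois} applied to the two constituent HGC-algebras of the H2GC-algebra. So the plan is simply to unpack the definition of an H2GC-algebra and cite the already-established lemma twice.

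\textbf{Steps.} First I would recall that $(\mathbb{H},{^\RIGHT},{^\Left},{^\Right},{^\LEFT})$ being an H2GC-algebra means, by definition, that both $(\mathbb{H},{^\RIGHT},{^\Left})$ and $(\mathbb{H},{^\Right},{^\LEFT})$ are HGC-algebras, i.e.\ $({^\RIGHT},{^\Left})$ and $({^\Right},{^\LEFT})$ are Galois connections on $\mathbb{H}$. Second, I would apply Lemma~\ref{Lem:InducedGalois} to the HGC-algebra $(\mathbb{H},{^\RIGHT},{^\Left})$, which yields directly that $\big({^{\Left^{-1}}},{^{\RIGHT^{-1}}}\big)$ is a Galois connection on $(\mathcal{O}(H),\subseteq)$. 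Third, I would apply Lemma~\ref{Lem:InducedGalois} again, this time to the HGC-algebra $(\mathbb{H},{^\Right},{^\LEFT})$ (relabelling ${^\Right}$ as the ${^\RIGHT}$-role operator and ${^\LEFT}$ as the ${^\Left}$-role operator), which gives that $\big({^{\LEFT^{-1}}},{^{\Right^{-1}}}\big)$ is a Galois connection on $(\mathcal{O}(H),\subseteq)$. That exhausts the claim.

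\textbf{Main obstacle.} There is essentially no obstacle — the paper itself says "the next lemma is obvious and its proof is omitted", so I would keep the write-up to one or two sentences at most, or simply leave the proof out as the authors did. If a proof is to be included at all, the only mild point of care is the bookkeeping in the second application: one must be careful that the relation $R_2^H$ is defined via $[x]^{\LEFT^{-1}} \subseteq y$ (equivalently $x \subseteq [y]^{\Right^{-1}}$), so that the inverse-image operators associated with the second Galois connection are indeed ${^{\LEFT^{-1}}}$ and ${^{\Right^{-1}}}$ in the order stated, matching the hypotheses of Lemma~\ref{Lem:InducedGalois}. Since Lemma~\ref{Lem:InducedGalois} was proved in full generality for an arbitrary HGC-algebra, no recomputation of the Galois-connection conditions is needed.
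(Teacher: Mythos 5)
Your proposal is correct and matches the argument the paper intends: the statement is exactly two instances of Lemma~\ref{Lem:InducedGalois}, applied to the HGC-algebras $(\mathbb{H},{^\RIGHT},{^\Left})$ and $(\mathbb{H},{^\Right},{^\LEFT})$ that by definition make up an H2GC-algebra, and your bookkeeping of which operator plays which role in the second application is right. The paper itself omits the proof as obvious, so your one-line citation of the earlier lemma is entirely in line with what the authors do.
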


Similarly, as in the case of HGC-algebras, we can show that the canonical frame  
$\mathcal{F}^H = (X_H,\subseteq, R_1^H, R_2^H)$ of any
H2GC-algebra $(\mathbb{H},{^\RIGHT},{^\Left}, {^\Right},{^\LEFT})$ is an {\sf Int2GC}-frame. 
For any valuation $v$ on $H$, we can define the valuation $v^*$ for the canonical frame  $\mathcal{F}^H$ 
by setting $x \in v^*(p)$ if and only if $v(p) \in x$ for all propositional variables $p \in \mathrm{Var}$ 
and $x \in X_H$. As in case of {\sf IntGC} (see Lemma~5.7 in \cite{DzJaKo10}), we can prove by 
formula induction that the Key Lemma holds, that is, for any {\sf Int2GC}-formula $A$ and 
$x \in X_H$, $x \models A$ if and only if $v(A) \in x$. 
Therefore, we may state the Kripke-completeness presented in the next theorem.

\begin{theorem}[\bf Completeness~II for Int2GC] \label{Thm:Completeness2}
A formula $A$ is provable in {\sf Int2GC} if and only if $A$ is Kripke-valid.
\end{theorem}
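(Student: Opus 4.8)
The plan is to establish Kripke-completeness of {\sf Int2GC} by following exactly the canonical-frame strategy that worked for {\sf IntGC} in \cite{DzJaKo10}, lifting each ingredient from one Galois connection to two. Kripke-soundness has already been noted, so only the ``if'' direction needs work: if $A$ is Kripke-valid then $A$ is provable in {\sf Int2GC}. First I would invoke Theorem~\ref{Thm:CompletenessI}(a): it suffices to show that $A$ is valid in every H2GC-algebra, since algebraic validity is equivalent to provability. So fix an arbitrary H2GC-algebra $(\mathbb{H},{^\RIGHT},{^\Left},{^\Right},{^\LEFT})$ and a valuation $v$ on it, and the goal becomes $v(A) = 1$.

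Next I would form the canonical {\sf Int2GC}-frame $\mathcal{F}^H = (X_H,\subseteq,R_1^H,R_2^H)$ described just above, with $X_H$ the set of prime filters of $\mathbb{H}$ and $R_1^H$, $R_2^H$ defined via $[\cdot]^{\RIGHT^{-1}}$ and $[\cdot]^{\LEFT^{-1}}$ (equivalently via $^{\Left}$ and $^{\Right}$). The frame conditions \eqref{Eq:frame2} and \eqref{Eq:frame3} must be checked: by Lemma~\ref{Lem:FrameConnection1} this reduces to showing that $(X_H,\subseteq,R_1^H)$ and $(X_H,\subseteq,(R_2^H)^{-1})$ are {\sf IntGC}-frames, which is precisely the content established for HGC-algebras in \cite{DzJaKo10} applied to the two HGC-reducts $(\mathbb{H},{^\RIGHT},{^\Left})$ and $(\mathbb{H},{^\Right},{^\LEFT})$ of the given H2GC-algebra; Lemma~\ref{Lem:InducedGalois2} supplies the induced Galois connections $({^{\Left^{-1}}},{^{\RIGHT^{-1}}})$ and $({^{\LEFT^{-1}}},{^{\Right^{-1}}})$ on $(\mathcal{O}(H),\subseteq)$ that make this work. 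Then define the canonical valuation $v^*$ by $x \in v^*(p) \iff v(p) \in x$, which is persistent since prime filters are $\subseteq$-upward closed as subsets of themselves in the obvious sense, giving the canonical model $(\mathcal{F}^H,v^*)$.

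The heart of the argument is the Key Lemma: for every {\sf Int2GC}-formula $B$ and every $x \in X_H$, $x \models B$ if and only if $v(B) \in x$. This is proved by induction on the structure of $B$. The propositional cases ($\wedge$, $\vee$, $\to$, $\neg$, variables) are the standard intuitionistic ones and go through unchanged; the cases $B = \UP C$ and $B = \Down C$ are exactly those verified in the {\sf IntGC} Key Lemma (Lemma~5.7 of \cite{DzJaKo10}) relative to $R_1^H$; and the cases $B = \DOWN C$ and $B = \Up C$ are handled symmetrically relative to $R_2^H$, using the $^{\Right}$/$^{\LEFT}$ description of $R_2^H$ and the prime-filter separation lemma (if $a \notin x$ for a prime filter $x$, or dually if one needs to extend a filter avoiding an element, apply the prime-filter extension theorem for distributive lattices). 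Concretely, to show $x \models \UP C \iff v(\UP C) = v(C)^\RIGHT \in x$ one needs: ($\Leftarrow$) given $v(C)^\RIGHT \in x$, produce a prime filter $y$ with $x R_1^H y$ and $v(C) \in y$, obtained by extending the principal filter generated by $v(C)$ to a prime filter avoiding a suitable element, using the Galois-connection inequalities; ($\Rightarrow$) is the routine direction from the definition of $R_1^H$ and the induction hypothesis.

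Finally, with the Key Lemma in hand: since $A$ is Kripke-valid it is valid in the canonical model, so $x \models A$ for all $x \in X_H$, hence $v(A) \in x$ for every prime filter $x$; by the prime filter theorem an element of a bounded distributive lattice lying in every prime filter must equal $1$, so $v(A) = 1$. As $\mathbb{H}$ and $v$ were arbitrary, $A$ is valid in all H2GC-algebras, and Theorem~\ref{Thm:CompletenessI}(a) yields provability in {\sf Int2GC}. The only real obstacle is the existential modal case of the Key Lemma --- constructing the witnessing prime filter $y$ with the correct $R_i^H$-relationship --- but this is a direct transcription of the corresponding step in \cite{DzJaKo10}, now carried out once for $R_1^H$ via $({^\RIGHT},{^\Left})$ and once for $R_2^H$ via $({^\Right},{^\LEFT})$, so no genuinely new difficulty arises; everything else is bookkeeping that splits cleanly along the two independent Galois connections, reflecting that {\sf Int2GC} is simply the fusion of two {\sf IntGC} logics.
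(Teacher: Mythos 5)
Your proposal is correct and follows essentially the same route as the paper: both arguments reduce Kripke-completeness to the algebraic completeness of Theorem~\ref{Thm:CompletenessI}(a), build the canonical {\sf Int2GC}-frame and valuation from an H2GC-algebra, and invoke the Key Lemma (proved by formula induction, with the modal cases split along the two independent Galois connections exactly as in the {\sf IntGC} case of \cite{DzJaKo10}). The only cosmetic difference is that the paper phrases the final step contrapositively (from $v(A)\neq 1$ obtain a prime filter omitting $v(A)$, hence a countermodel), whereas you argue directly via the prime filter theorem that $v(A)=1$; these are interchangeable.
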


\begin{proof} Suppose that an {\sf Int2GC}-formula $A$ is not provable. This means that
there exists an H2GC-algebra  $(\mathbb{H},{^\RIGHT},{^\Left}, {^\Right},{^\LEFT})$ 
and a valuation $v \colon \textrm{Var} \to H$ such that $v(A) \neq 1$. We construct the 
canonical frame $\mathcal{F}^H$ and the valuation $v^*$ as above. Because $v(A) \neq 1$,
there exists a prime filter $x$ such that $v(A) \notin x$. By the Key Lemma, this means that 
$x \not \models A$ in the canonical model $(\mathcal{F}^H,v^*)$. 
Therefore, $A$ is not Kripke-valid.
\end{proof}

\medskip\noindent%
{\bf Int2GC+FS-frames.} \ An {\sf Int2GC+FS}-{\em frame} (of a \emph{Kripke frame of\/ {\sf Int2GC+FS}}) 
is a tripe $\mathcal{F} = (X, \le, R)$, where $X$ is a non-empty set, $\le$ is a preorder on $X$, and $R$ is a 
relation on $X$ satisfying
\begin{equation} \tag{R4} \label{Eq:frame4} 
(R \circ {\leq}) \subseteq ({\leq} \circ R)
\end{equation}
\begin{equation} \tag{R5} \label{Eq:frame5} 
({\geq} \circ R) \subseteq (R \circ {\geq}).
\end{equation}

\begin{lemma} \label{Lem:FrameConnection2}
$(X,\leq,R)$ is an {\sf Int2GC+FS}-frame if and only if $(X,\leq,R \circ {\geq}, {\leq} \circ R)$
is an {\sf Int2GC}-frame.
\end{lemma}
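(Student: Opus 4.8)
The plan is to reduce the statement to routine manipulations of relational compositions, using only that $\le$ is a preorder, so that ${\le} \circ {\le} = {\le}$, ${\ge} \circ {\ge} = {\ge}$, and both $\le$ and $\ge$ contain the identity relation on $X$. Write $R_1 := R \circ {\ge}$ and $R_2 := {\le} \circ R$. Since the carrier $X$ and the preorder $\le$ are the same in both structures, and being non-empty and being a preorder carry over with no extra condition, the lemma reduces to the claim that conditions \eqref{Eq:frame4} and \eqref{Eq:frame5} hold for $R$ if and only if \eqref{Eq:frame2} holds for $R_1$ and \eqref{Eq:frame3} holds for $R_2$.

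For the ``only if'' direction I would assume \eqref{Eq:frame4} and \eqref{Eq:frame5}. Associativity together with ${\ge} \circ {\ge} = {\ge}$ gives ${\ge} \circ R_1 \circ {\ge} = {\ge} \circ R \circ {\ge}$, and then \eqref{Eq:frame5} yields ${\ge} \circ R \circ {\ge} \subseteq R \circ {\ge} \circ {\ge} = R_1$, which is \eqref{Eq:frame2}. Dually, ${\le} \circ R_2 \circ {\le} = {\le} \circ R \circ {\le}$ by ${\le} \circ {\le} = {\le}$, and \eqref{Eq:frame4} gives ${\le} \circ R \circ {\le} \subseteq {\le} \circ {\le} \circ R = R_2$, which is \eqref{Eq:frame3}.

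For the ``if'' direction I would assume \eqref{Eq:frame2} for $R_1$ and \eqref{Eq:frame3} for $R_2$. By reflexivity of $\ge$ we have ${\ge} \circ R \subseteq {\ge} \circ R \circ {\ge} = {\ge} \circ R_1 \circ {\ge} \subseteq R_1 = R \circ {\ge}$, which is exactly \eqref{Eq:frame5}; symmetrically, reflexivity of $\le$ gives $R \circ {\le} \subseteq {\le} \circ R \circ {\le} = {\le} \circ R_2 \circ {\le} \subseteq R_2 = {\le} \circ R$, which is \eqref{Eq:frame4}.

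I do not expect a real obstacle here: the argument is short and purely equational. The only things needing care are keeping the direction of composition consistent throughout, and observing the slightly counter-intuitive pairing that the construction produces --- condition \eqref{Eq:frame2} on $R_1 = R \circ {\ge}$ is equivalent to condition \eqref{Eq:frame5} (the one relating $R$ and $\ge$), while condition \eqref{Eq:frame3} on $R_2 = {\le} \circ R$ is equivalent to \eqref{Eq:frame4}; the two halves of the equivalence then decouple entirely.
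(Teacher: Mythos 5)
Your proof is correct and follows essentially the same route as the paper's: both directions are handled by the same short relational computations using idempotence of $\le$ and $\ge$ for the ``only if'' part and reflexivity for the ``if'' part, and your observation that \eqref{Eq:frame2} for $R\circ{\ge}$ decouples as equivalent to \eqref{Eq:frame5}, while \eqref{Eq:frame3} for ${\le}\circ R$ is equivalent to \eqref{Eq:frame4}, matches exactly how the paper's argument is organised.
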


\begin{proof}
Suppose $(X,\leq,R)$ is an {\sf Int2GC+FS}-frame. Let $R_1 = R \circ {\geq}$ and $R_2 = {\leq} \circ R$.
We show that the relations $R_1$ and $R_2$ satisfy conditions \eqref{Eq:frame2} and \eqref{Eq:frame3}.
Now ${\geq} \circ R_1 \circ {\geq} = {\geq} \circ (R \circ {\geq}) \circ {\geq} = {\geq} \circ R \circ {\geq} = 
({\geq} \circ R) \circ {\geq} \subseteq  (R \circ {\geq}) \circ {\geq} = R \circ {\geq} = R_1$, that is,
\eqref{Eq:frame2} is satisfied. Similarly, 
${\leq} \circ R_2 \circ {\leq} =  {\leq} \circ ({\leq} \circ R) \circ {\leq} = {\leq} \circ R \circ {\leq}
= {\leq} \circ (R \circ {\leq}) \subseteq {\leq} \circ ({\leq}  \circ R) = {\leq} \circ R = R_2$, and also
\eqref{Eq:frame3} holds. Thus, $(X,\leq,R \circ {\geq}, {\leq} \circ R)$ is an {\sf Int2GC}-frame.

Conversely, suppose that $(X,\leq,R \circ {\geq}, {\leq} \circ R)$ is an {\sf Int2GC}-frame. We again
put $R_1 = R \circ {\geq}$ and $R_2 = {\leq} \circ R$. Then, $R \circ {\leq} \subseteq
{\leq} \circ  R \circ {\leq} = R_2 \circ {\leq} \subseteq {\leq} \circ R_2 \circ {\leq} \subseteq
R_2 = {\leq} \circ R$, that is, \eqref{Eq:frame4} holds. Similarly, ${\geq} \circ R \subseteq
{\geq} \circ R \circ {\geq} = {\geq} \circ R_1 \subseteq {\geq} \circ R_1 {\geq} \subseteq
R_1 = R \circ {\geq}$. Hence, also \eqref{Eq:frame5} is satisfied and  $(X,\leq,R)$ is an {\sf Int2GC+FS}-frame.
\end{proof}

\begin{corollary}
$(X,\leq,R)$ is an {\sf Int2GC+FS}-frame if and only if $(X,\leq,R \circ {\geq})$ and 
$(X,\leq, R^{-1} \circ {\geq})$ are {\sf IntGC}-frames
\end{corollary}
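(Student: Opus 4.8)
The plan is to chain together the two preceding structural lemmas, so that no new semantic argument is needed. First I would invoke Lemma~\ref{Lem:FrameConnection2}: $(X,\leq,R)$ is an {\sf Int2GC+FS}-frame if and only if $(X,\leq,\, R\circ{\geq},\, {\leq}\circ R)$ is an {\sf Int2GC}-frame. Then I would apply Lemma~\ref{Lem:FrameConnection1} with $R_1 := R\circ{\geq}$ and $R_2 := {\leq}\circ R$: that quadruple is an {\sf Int2GC}-frame if and only if $(X,\leq,\, R\circ{\geq})$ and $(X,\leq,\, ({\leq}\circ R)^{-1})$ are both {\sf IntGC}-frames.

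The only computation required is the relational identity $({\leq}\circ R)^{-1} = R^{-1}\circ{\geq}$, which follows from the general fact $(S\circ T)^{-1}=T^{-1}\circ S^{-1}$ together with ${\leq}^{-1}={\geq}$. Substituting this back yields exactly the asserted equivalence: $(X,\leq,R)$ is an {\sf Int2GC+FS}-frame if and only if $(X,\leq,\, R\circ{\geq})$ and $(X,\leq,\, R^{-1}\circ{\geq})$ are {\sf IntGC}-frames.

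I do not expect any real obstacle here; the corollary is a mechanical consequence of Lemmas~\ref{Lem:FrameConnection1} and~\ref{Lem:FrameConnection2}. The only point needing slight care is bookkeeping: when passing from the relation pair $(R_1,R_2)$ to the two single-relation {\sf IntGC}-frames it is the second relation $R_2={\leq}\circ R$ that gets inverted (not $R_1$), and one must simplify $({\leq}\circ R)^{-1}$ correctly to recognise it as $R^{-1}\circ{\geq}$.
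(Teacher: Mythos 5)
Your proof is correct and follows exactly the paper's own argument: the paper likewise derives the corollary directly from Lemmas~\ref{Lem:FrameConnection1} and~\ref{Lem:FrameConnection2} via the identity $({\leq}\circ R)^{-1}=R^{-1}\circ{\geq}$. Nothing is missing.
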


\begin{proof}
The claim follows directly from Lemmas~\ref{Lem:FrameConnection1} and \ref{Lem:FrameConnection2},
because $({\leq} \circ R)^{-1} = R^{-1} \circ {\geq}$.
\end{proof}

Again, in {{\sf Int2GC+FS}-frames} the valuations and the satisfiability relation $\models$ for $\vee$, $\wedge$, $\to$, 
and $\neg$ are defined as earlier, and satisfiability of $\UP A$, $\Down A$, $\Up A$, and $\DOWN A$ are defined by
\begin{align*}
x \models \UP A   &\iff \mbox{ exists } y \mbox{ such that } x \, (R \circ {\geq}) \, y \text{ and } y \models  A \\
x \models \Down A &\iff \mbox{ for all } y, y \, (R \circ {\geq}) \, x \text{ implies } y \models  A \\
x \models \Up A   &\iff \mbox{ exists } y \mbox{ such that } y \, ({\leq} \circ R) \, x \text{ and } y \models  A \\
x \models \DOWN A &\iff \mbox{ for all } y, x \, ({\leq} \circ R) \, y \text{ implies } y \models  A 
\end{align*}

\begin{lemma} \label{Lem:Perisisent}
For all {\sf Int2GC+FS}-models $\mathcal{M} = (\mathcal{F},v)$ and formulas $A \in \Phi$: 
\[  x \models  A \mbox{ \ and \ } x \leq y \mbox{ \ imply \ }  y \models  A.\]
\end{lemma}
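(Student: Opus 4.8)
The plan is to prove the persistence (heredity) lemma by induction on the structure of the formula $A$, exactly as one does for the analogous statement in {\sf IntGC} (recalled in Section~\ref{sSec:Frames}). Since the satisfiability clauses for $\vee$, $\wedge$, $\to$, and $\neg$ in an {\sf Int2GC+FS}-model are literally the same as in an {\sf IntGC}-model, and the base case $A = p$ holds because valuations are required to be $\le$-closed, the only new work is in the four modal cases $\UP A$, $\Down A$, $\Up A$, and $\DOWN A$. The cleanest route is to observe that, by Lemma~\ref{Lem:FrameConnection2}, every {\sf Int2GC+FS}-frame $(X,\le,R)$ gives rise to an {\sf Int2GC}-frame $(X,\le,R_1,R_2)$ with $R_1 = R \circ {\ge}$ and $R_2 = {\le} \circ R$, and the {\sf Int2GC+FS} satisfiability clauses for the modal operators are by definition exactly the {\sf Int2GC} clauses for $R_1$ and $R_2$. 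Hence persistence for {\sf Int2GC+FS}-models reduces to persistence for {\sf Int2GC}-models, which in turn reduces via Lemma~\ref{Lem:FrameConnection1} to persistence for {\sf IntGC}-models, already established in \cite{DzJaKo10}. In other words, most of the argument is just bookkeeping of the translation between frame presentations.

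If one prefers to argue directly, here is how the four modal cases go. Suppose $x \models \UP A$ and $x \le y$; then there is $z$ with $x\,(R\circ{\ge})\,z$ and $z \models A$. Using $y \le x$... no — we have $x \le y$, so we need $y\,(R\circ{\ge})\,z$. This follows from \eqref{Eq:frame4}: since $x \le y$ means $y\,{\ge}\,x$, wait, we want to push $y$ forward. Actually the correct use is: from $x\,(R\circ{\ge})\,z$ and the fact that $(X,\le,R_1)$ with $R_1 = R\circ{\ge}$ satisfies \eqref{Eq:frame2}, i.e. ${\ge}\circ R_1\circ{\ge}\subseteq R_1$, we get that $y\,R_1\,z$ whenever $y \ge x$ and $x\,R_1\,z$; but here we have $x \le y$, so this direction is wrong, and indeed $\UP$ is not monotone in the frame in the naive direction. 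The resolution is that persistence is about the \emph{satisfaction} relation, and one checks: $x \models \Down A$, $x \le y$, and $z\,R_1\,y$ imply $z\,R_1\,x$ is \emph{not} automatic either. So the honest statement is that one verifies persistence \emph{simultaneously} by induction, using for the $\Down$ and $\DOWN$ (box-like) cases that the accessibility relations satisfy the appropriate ${\ge}\circ R_i$-closure, and for the $\UP$ and $\Up$ (diamond-like) cases the dual closure; precisely these are \eqref{Eq:frame2} and \eqref{Eq:frame3}, equivalently \eqref{Eq:frame4} and \eqref{Eq:frame5}.

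The main obstacle, such as it is, is getting the direction of the frame conditions right in each of the four modal subcases: for $\UP A$ and $\Up A$ one needs that moving $x$ up along $\le$ does not lose $R_1$- (resp.\ $R_2$-) successors that already satisfy $A$, which uses ${\ge}\circ R_1 \circ {\ge} \subseteq R_1$ applied with the witness fixed; for $\Down A$ and $\DOWN A$ one needs that moving $x$ up along $\le$ only shrinks the set of $R_1$- (resp.\ $R_2$-) predecessors whose values must satisfy $A$, again a consequence of the same composition inclusion. Since $R_1 = R\circ{\ge}$ and $R_2 = {\le}\circ R$ already absorb $\le$ on the appropriate side, these inclusions are immediate from \eqref{Eq:frame2} and \eqref{Eq:frame3}, which Lemma~\ref{Lem:FrameConnection2} guarantees hold in any {\sf Int2GC+FS}-frame. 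All remaining steps --- the connective cases and the propositional-variable base case --- are identical to the {\sf IntGC} argument and need only be cited, so I would keep the write-up short, reducing to Lemma~\ref{Lem:FrameConnection2} and Lemma~\ref{Lem:FrameConnection1} and the corresponding result for {\sf IntGC} from \cite{DzJaKo10}.
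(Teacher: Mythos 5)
Your overall route --- induction on $A$, with the propositional cases exactly as for {\sf IntGC} and the four modal cases discharged by the frame conditions, packaged via Lemma~\ref{Lem:FrameConnection2} --- is essentially the paper's own proof; the paper argues the modal cases directly from (R4) and (R5) (it uses (R4) for $\Up A$ and only transitivity of $\le$ for $\DOWN A$), and your final paragraph states the correct reduction to (R2) and (R3).

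However, your middle paragraph contains a directional error that you notice but then resolve the wrong way, and it would have to be repaired before this stands as a proof. With the paper's left-to-right convention for $\circ$, condition (R2), namely ${\ge}\circ R_1\circ{\ge}\subseteq R_1$, says: if $y\ge x'$, $x'\,R_1\,z'$ and $z'\ge z$, then $y\,R_1\,z$. In the $\UP A$ case you have $x\,R_1\,z$ with $z\models A$ and $x\le y$; since $x\le y$ is literally the statement $y\ge x$, taking $x'=x$ and $z'=z$ (reflexivity of $\ge$) in (R2) gives $y\,R_1\,z$ at once, so the inference ``$y\ge x$ and $x\,R_1\,z$ imply $y\,R_1\,z$'' is not the wrong direction --- it is exactly the instance you need. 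Likewise your claim that ``$z\,R_1\,y$ and $x\le y$ imply $z\,R_1\,x$ is not automatic'' is mistaken: it is the instance $R_1\circ{\ge}\subseteq R_1$ of (R2), which settles the $\Down A$ case. The same two observations with (R3) handle $\Up A$ and $\DOWN A$. Once the directions are sorted out your argument coincides with the paper's, so I would simply delete the hesitations and record, for each of the four modal clauses, which absorption instance of (R2) or (R3) is being invoked.
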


\begin{proof} As an example, we show the claim for $\Up$ and $\DOWN$.

(${\Up}A$) Suppose $x \models \Up A$ and $x \leq y$. Then, there exists $z$ such that
$z({\leq}\circ R)x$ and $z \models A$. Thus, there is $w$ such
that $z \leq w$, $wRx$, and $z \models A$. Now $wRx$ and $x \leq y$
imply $w(R \circ {\leq})y$. From frame condition \eqref{Eq:frame4}, we
get $w({\leq} \circ R)y$. Now $z \leq w$ implies $z({\leq} \circ R)y$.
Since $z \models A$, we have $y \models \Up A$.

(${\DOWN} A$) Assume that $x \models \DOWN A$, $x \leq y$, but $y \not \models \DOWN A$.
Then, there exists $z$ such that $y({\leq} \circ R)z$ and $z \not \models A$. Since $x \leq y$, we have 
$x({\leq} \circ R)z$. By $z \not \models A$,
we get  $x \not \models \DOWN A$, a contradiction.
\end{proof}

Our next lemma showing a connection between validity in {\sf Int2GC+FS}-frames and {\sf Int2GC}-frames is
obvious and thus its proof is omitted.
\begin{lemma} \label{Lem:Validity}
Let $A \in \Phi$. Then, $A$ is valid in the {\sf Int2GC+FS}-frame $(X,\leq,R)$ if and only
$A$ is valid in the {\sf Int2GC}-frame $(X,\leq,R \circ {\geq}, {\leq} \circ R)$.
\end{lemma}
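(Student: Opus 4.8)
The plan is to prove Lemma~\ref{Lem:Validity} by a straightforward formula induction, reducing everything to the fact that the two frames induce the same satisfiability relation on every model. First I would observe that since the two frames share the same carrier $X$ and the same preorder $\leq$, any valuation $v$ is simultaneously a valuation for both, so it suffices to fix such a $v$ and show that for every formula $A \in \Phi$ and every $x \in X$, we have $x \models A$ in the model $\mathcal{M} = ((X,\leq,R),v)$ if and only if $x \models A$ in the model $\mathcal{M}' = ((X,\leq,R \circ {\geq}, {\leq} \circ R),v)$. Once this pointwise equivalence is established, validity in a model transfers, and hence validity in the frame transfers (quantifying over all $v$), which is exactly the claim.

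The induction itself is essentially trivial for the propositional connectives $\vee$, $\wedge$, $\to$, $\neg$: in both models these are interpreted using only $\leq$ and the already-established equivalence for subformulas, so the inductive step is immediate. The only cases requiring any attention are the four modal operators, and here the point is purely definitional: the satisfiability clauses for $\UP A$, $\Down A$, $\Up A$, $\DOWN A$ in an {\sf Int2GC+FS}-frame $(X,\leq,R)$ are written so that $\UP$ and $\Down$ refer to the relation $R \circ {\geq}$ and $\Up$ and $\DOWN$ refer to $R_2 = {\leq} \circ R$ (equivalently, $\Up$ uses $R_2^{-1}$ and $\DOWN$ uses $R_2$). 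But these are precisely the relations $R_1 = R \circ {\geq}$ and $R_2 = {\leq} \circ R$ of the associated {\sf Int2GC}-frame, and the {\sf Int2GC} clauses for the same operators are stated in terms of $R_1$ and $R_2$ in exactly the same way. So, using the induction hypothesis on the subformula $A$, each modal clause in $\mathcal{M}$ unfolds to the same condition as the corresponding clause in $\mathcal{M}'$. That $(X,\leq,R \circ {\geq}, {\leq} \circ R)$ is genuinely an {\sf Int2GC}-frame (so that its semantics is defined) is guaranteed by Lemma~\ref{Lem:FrameConnection2}.

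There is no real obstacle here; the lemma is flagged as ``obvious'' precisely because the definitions were engineered to make it so. If anything, the only thing to be slightly careful about is bookkeeping the inverse relations: the {\sf Int2GC} clause for $\Up$ quantifies over $y$ with $y\, R_2\, x$, while the {\sf Int2GC+FS} clause for $\Up$ quantifies over $y$ with $y\,({\leq}\circ R)\,x$, and one must note these are literally the same since $R_2 = {\leq}\circ R$; likewise for $\DOWN$ versus the universal clause over $x\,({\leq}\circ R)\,y$. Hence I would simply present the modal base cases, invoke Lemma~\ref{Lem:FrameConnection2} to justify well-definedness of the {\sf Int2GC} semantics, and conclude. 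Given its routine nature, I expect the paper to omit the proof, as indicated.
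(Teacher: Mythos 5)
Your proof is correct and is exactly the intended argument: since $R_1 = R\circ{\geq}$ and $R_2 = {\leq}\circ R$ make the modal satisfaction clauses of the two frames literally identical (and the propositional clauses depend only on the shared $\leq$ and valuations), the satisfiability relations coincide pointwise and validity transfers. The paper indeed omits the proof as obvious, so there is nothing to compare beyond noting that your induction, with the appeal to Lemma~\ref{Lem:FrameConnection2} for well-definedness, is the argument the authors had in mind.
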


\begin{theorem}[\bf Soundness II for  Int2GC+FS] \label{Thm:SoundnessII}
Every formula provable in {\sf Int2GC+FS} is Kripke-valid.
\end{theorem}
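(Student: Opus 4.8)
The plan is to prove soundness of \textsf{Int2GC+FS} by reducing it to the already-established soundness of \textsf{Int2GC}. The key observation is Lemma~\ref{Lem:Validity}: a formula $A$ is valid in an \textsf{Int2GC+FS}-frame $(X,\leq,R)$ if and only if $A$ is valid in the associated \textsf{Int2GC}-frame $(X,\leq,R\circ{\geq},{\leq}\circ R)$, and by Lemma~\ref{Lem:FrameConnection2} the latter is indeed an \textsf{Int2GC}-frame. Since \textsf{Int2GC+FS} is obtained from \textsf{Int2GC} by adding the Fischer Servi axioms (FS1)--(FS4), the only thing that needs checking beyond what is already known is that these four axioms are valid in every \textsf{Int2GC+FS}-frame; everything else (the intuitionistic axioms, the Galois-connection rules, modus ponens, substitution) is handled by Theorem~\ref{Thm:Completeness2} together with Lemma~\ref{Lem:Validity}.

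First I would recall that \textsf{Int2GC} is Kripke-sound (noted just before Theorem~\ref{Thm:Completeness2}, and also obtained from Theorem~\ref{Thm:Completeness2}), so every \textsf{Int2GC}-theorem is valid in every \textsf{Int2GC}-frame, in particular in every frame of the form $(X,\leq,R\circ{\geq},{\leq}\circ R)$ arising from an \textsf{Int2GC+FS}-frame. By Lemma~\ref{Lem:Validity} this gives validity of all \textsf{Int2GC}-theorems in every \textsf{Int2GC+FS}-frame. It then remains to verify, by a direct semantic argument on an arbitrary \textsf{Int2GC+FS}-model $\mathcal{M}=(\mathcal{F},v)$ with $\mathcal{F}=(X,\leq,R)$, that each instance of (FS1)--(FS4) is satisfied at every point $x\in X$. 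By Proposition~\ref{Prop:FSEquiv}, it suffices to check (FS1) and (FS2), since (FS4) and (FS3) follow from these in \textsf{Int2GC} and hence are valid wherever (FS1) and (FS2) are.

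For (FS1), $\UP(A\to B)\to(\DOWN A\to\UP B)$, I would fix $x$, take any $y\geq x$ with $y\models\UP(A\to B)$ and any $z\geq y$ with $z\models\DOWN A$, and show $z\models\UP B$. Unwinding the semantics, $y\models\UP(A\to B)$ means there is $w$ with $y\,(R\circ{\geq})\,w$ and $w\models A\to B$; persistence (Lemma~\ref{Lem:Perisisent}) propagates this to $z$, so there is $w'$ with $z\,(R\circ{\geq})\,w'$ and $w'\models A\to B$. The frame condition \eqref{Eq:frame4}, $R\circ{\leq}\subseteq{\leq}\circ R$, combined with $z\models\DOWN A$ (i.e.\ $w'\models A$ for all $w'$ with $z\,({\leq}\circ R)\,w'$), lets me conclude $w'\models A$ at the relevant point, hence $w'\models B$, hence $z\models\UP B$. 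The argument for (FS2), $\Up(A\to B)\to(\Down A\to\Up B)$, is the mirror image using the dual relation direction and frame condition \eqref{Eq:frame5}, $ {\geq}\circ R\subseteq R\circ{\geq}$. The main obstacle is purely bookkeeping: correctly threading the composite relations $R\circ{\geq}$ and ${\leq}\circ R$ through the frame conditions and the persistence lemma so that the existential witness for $\UP(A\to B)$ and the universal constraint from $\DOWN A$ land on a common point where $A$ and $A\to B$ can be combined via modus ponens at that point. Once the relational diagram chase is set up carefully, the verification of (FS1) and (FS2) is routine, and Proposition~\ref{Prop:FSEquiv} closes out (FS3) and (FS4).
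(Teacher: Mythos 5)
Your proposal is correct and follows essentially the same strategy as the paper: the Galois-connection rules are shown to preserve validity over {\sf Int2GC+FS}-frames by transferring to the associated {\sf Int2GC}-frame via Lemma~\ref{Lem:Validity} and invoking Kripke-soundness of {\sf Int2GC}, and then one representative from each equivalence class of interlinking axioms is verified directly, with Proposition~\ref{Prop:FSEquiv} (resp.\ Corollary~\ref{Cor:Axiomatisation}) covering the rest. The only difference is that the paper checks the Dunn forms (D1) and (D2) instead of (FS1) and (FS2), which shortens the semantic verification by avoiding the nested quantification over $\geq$-successors that your chase for $\UP(A\to B)\to(\DOWN A\to\UP B)$ has to thread through persistence.
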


\begin{proof} Suppose that $\Up A \to B$ is valid in a 
{\sf Int2GC+FS}-frame $(X,\leq,R)$. Then, by Lemma~\ref{Lem:Validity},
$\Up A \to B$ is valid in the {\sf Int2GC}-frame $(X,\leq,R \circ {\geq}, {\leq} \circ R)$.
This implies that $A \to \DOWN B$ is valid in the {\sf Int2GC}-frame 
$(X,\leq,R \circ {\geq}, {\leq} \circ R)$, because {\sf Int2GC}-preserves
validity of the Galois connection rules. By  Lemma~\ref{Lem:Validity}, 
$A \to \DOWN B$ is valid in the {\sf Int2GC+FS}-frame $(X,\leq,R)$. 
Thus, (GC\,${\Up}{\DOWN}$) preserves validity.
Rules (GC\,${\DOWN}{\Up}$), (GC\,${\UP}{\Down}$), and 
(GC\,${\Down}{\UP}$) may be considered similarly.

We show  that axiom (D1) is a valid formula. Validity
of (D2) can be proved analogously. By Corollary~\ref{Cor:Axiomatisation},
this gives that the axioms of {\sf Int2GC+FS} are valid.

Suppose $x \models \UP A \wedge \DOWN B$. 
Then, $x \models \UP A$ and $x \models \DOWN B$. 
So, there exists $y$ such that $x(R \circ {\geq})y$ 
and $y \models A$. Thus, there is $w$ such that 
$xRv$ and $w \geq y$. Because of persistency, we have 
$w \models A$. Now $x \leq x$ and $xRw$ imply $x({\leq} \circ R)w$. 
The fact $x \models \DOWN B$ means that
for all $z$, $x ({\leq} \circ R) z$ implies 
$z \models B$. Therefore, $w \models B$ and
thus $w \models A \wedge B$. Because
$xRw$ and $w \geq w$, we have $x (R \circ {\geq}) w$
implying $x \models \UP(A \wedge B)$. So, (D1) is
a valid formula.
\end{proof}

\begin{example} \label{Exa:Preference}
We present an application showing how preference relations may be used for obtaining particular 
Kripke-frames of {\sf Int2GC+FS}.Several definitions of preference structures can be found in the 
literature; see \cite{Preference}. 
There are two fundamental relations, namely ``better'' (\emph{strict preference}) and ``similar'' 
(\emph{indifference}). Here we denote ``$b$ is better than $a$'' by $a \prec b$ and
$a \sim b$ denotes that $a$ and $b$ are similar. Usually, it is assumed that $\prec$ and $\sim$ 
have at least the following  properties:
\begin{enumerate}[(i)]
\item $a \prec b$ implies $b \nprec a$   \hfill (\emph{asymmetry of $\prec$}) 
\item $a \sim a$                    \hfill (\emph{reflexivity of $\sim$}) 
\item $a \sim b$ implies $b \sim a$ \hfill (\emph{symmetry of $\sim$}) 
\item $a \prec b$ implies $a \nsim b$   \hfill (\emph{incompatibility of $\prec$ and $\sim$}) 
\end{enumerate}

Suppose now that $\prec$ is a transitive strict preference relation on some universe of discourse $U$.
Transitivity is a quite natural property of strict preference, because if $a$ is better than $b$
and $b$ is better than $c$, also $a$ should be better that $c$. 

Let us denote by $\preceq$ the relation ${\prec} \cup {\Delta_U}$, where $\Delta_U$ is
the \emph{identity relation} of $U$, that is, $\Delta_U = \{ (x,x) \mid x \in U\}$. 
The relation $\preceq$ is obviously a preorder. Note that since $\prec$ is assumed
to be asymmetric, then $a \preceq b$ and $b \preceq a$ imply $a = b$. This means
that $\preceq$ is a partial order on $U$. Assume also that $\preceq$ and
$\sim$ are connected by conditions  \eqref{Eq:frame4} and  \eqref{Eq:frame5}, that is, 
\begin{center}
 $({\sim} \circ {\preceq}) \subseteq ({\preceq} \circ {\sim})$ \quad and \quad 
 $({\succeq} \circ {\sim}) \subseteq ({\sim} \circ {\succeq})$, 
\end{center}
where $\succeq$ is the inverse relation of $\preceq$. These assumptions
hold for instance in such object sets which can organized in ``levels''
as in Figure~\ref{Fig:Fig1} -- elements in the same level are all similar with
respect to their properties, and the elements in an upper level are better than the lower ones.
\begin{figure}[ht]
\centering
\includegraphics[width=115mm]{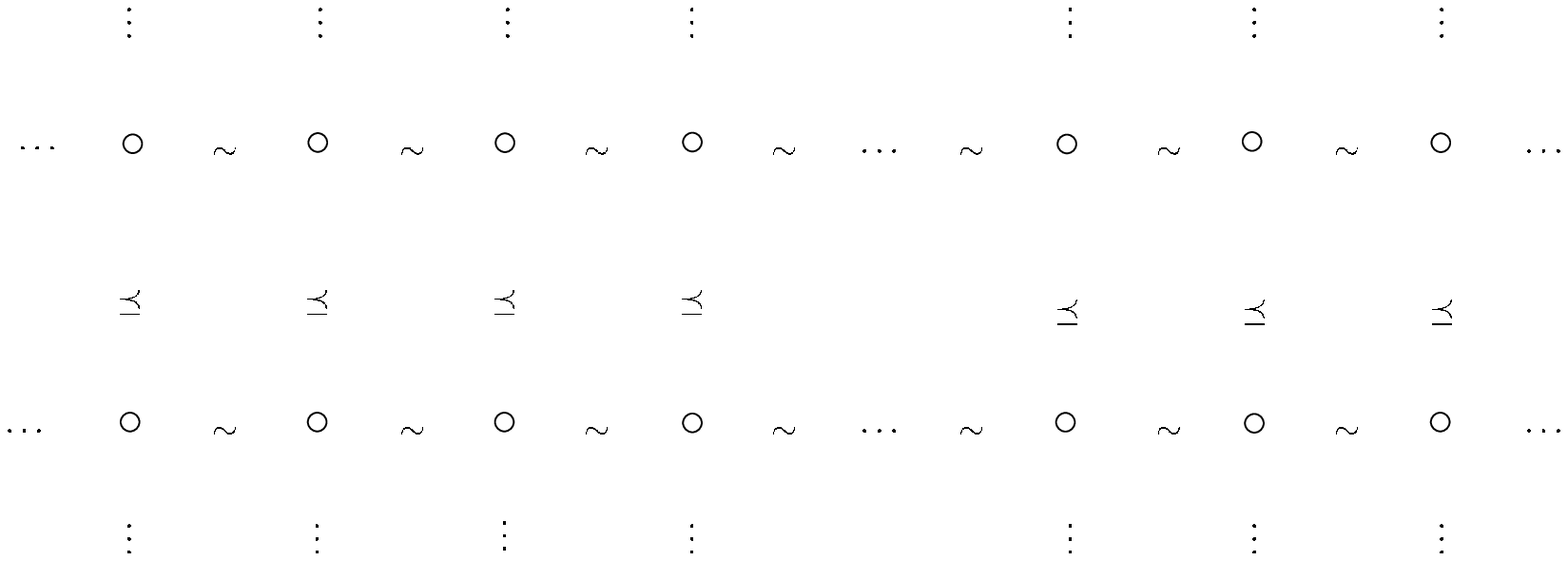} 
\caption{\label{Fig:Fig1}}
\end{figure}

Hence, the triple $(U,\preceq,\sim)$ can be viewed as an {\sf Int2GC+FS}-frame.
Because the relation $\sim$ is symmetric, $\DOWN A$ and $\Down A$ have equal interpretations, and
the same holds for $\UP A$ and $\Up A$. This
means  that ${\DOWN} A \leftrightarrow {\Down} A$ and ${\UP} A \leftrightarrow {\Up} A$
are valid formulas in any Kripke-model based on the frame $(U,\preceq,\sim)$. This
implies, for instance, that $A \to \DOWN \UP A$ and $\UP \DOWN A \to A$
are valid in all such Kripke-models for all {\sf Int2GC+FS}-formulas $A$.

Additionally, because $\sim$ and  $\preceq$ are reflexive, we have that $A \to \UP A$ and
$\DOWN A \to A$ are valid in all Kripke-models based on $(U,\preceq,\sim)$. 
Let the formula $A$ represent some property, that is,  
$x \models A$ means that the object $x \in U$ has this property. 
The formulas $\UP A$ and $\DOWN A$ have the following interpretations:
\begin{enumerate}[(i)]
\item $x \models \UP A$ if there exist $y,z \in U$ such that $x \sim y$, $y \succeq z$ and $z \models A$, 
that is, $x$ is similar to an object that is better than or equal to an object having the property $A$.

\item $x \models \DOWN A \iff $ for all $y,z$, $x \preceq y$ and $y \sim z$ imply $z \models  A$,
that is, all objects similar to the objects being better or equal to $x$ have the property $A$.
\end{enumerate}
Thus, the semantics based on preference $\succeq$ and similarity $\sim$ validates many formulas 
that are not  generally {\sf Int2GC+FS}-provable.  To get a full correspondence, one should admit
$R$ to be any \emph{information relation} satisfying \eqref{Eq:frame4} and  \eqref{Eq:frame5}, that is, 
\begin{center}
 $(R \circ {\preceq}) \subseteq ({\preceq} \circ R)$ \quad and \quad 
 $({\succeq} \circ R) \subseteq (R \circ {\succeq})$, 
\end{center}
not just symmetric ones. Various information relations are studied in \cite{DemOrl02}, for instance.
\end{example}

\subsection{Canonical Frames and Completeness of {\sf Int2GC+FS}}
\label{sSec:Canonical}

To prove the completeness theorem with respect to Kripke-models, we will apply canonical frames
and the algebraic completeness for {\sf Int2GC+FS} presented in Theorem~\ref{Thm:CompletenessI}(b).

The \emph{canonical {\sf Int2GC+FS}-frame} of an H2GC+FS-algebra $(\mathbb{H},{^\RIGHT},{^\Left}, {^\Right},{^\LEFT})$
is a structure $(X_H,\subseteq, R^H)$, where $X_H$ is the set of lattice-filters of $H$ and the
relation $R^H$ is defined by
\[ (x,y) \in R^H \iff  [x]^{\LEFT^{-1}}  \subseteq y \subseteq [x]^{\RIGHT^{-1}}. \]
The relation $R^H$ can be described also as
\[ (x,y) \in R^H \iff [y]^{\Left^{-1}} \subseteq x \subseteq [y]^{\Right^{-1}}. \]
This means that $R^H = R_1^H \cap R_2^H$, where  $R_1^H$ and $R_2^H$ are the
relations of the canonical {\sf Int2GC}-frame of an H2GC-algebra.

Next, we will show that the canonical {\sf Int2GC+FS}-frame is a Kripke-frame of {\sf Int2GC+FS}. 
Before that, we present some results and observations that are needed for our proofs. 
We denote by $[S \rangle$ the lattice-filter generated by $S \subseteq H$. It is well known that 
$[S \rangle$ is the set of all elements $a \in H$ such that $a_1 \wedge \cdots a_n \leq a$ 
for some elements $a_1,\ldots,a_n \in S$. We also denote for any $x \in X_H$:
\begin{align*}
[x]^{\LEFT}   &= \{ a^\LEFT   \mid a \in x\};             
&[x]^{\RIGHT} &= \{ a^\RIGHT \mid a \in x\};\\
[x]^{\Left}   &= \{ a^\Left  \mid a \in x\}; 
&[x]^{\Right} &= \{ a^\Right  \mid a \in x\}. 
\end{align*}

\begin{lemma}\label{Lem:FilterPrimefilter}
Let  $(\mathbb{H},{^\RIGHT},{^\Left}, {^\Right},{^\LEFT})$ be an {H2GC+FS}-algebra. 
If $k$ is a filter and  $y$ is a prime filter such that  $k \cap [-y]^\LEFT = \emptyset$, 
then there exists a prime filter $u$ such that $k \subseteq u$ and $u \cap [-y]^\LEFT = \emptyset$.
\end{lemma}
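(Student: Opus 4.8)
The plan is to obtain $u$ by a standard Zorn's-lemma / prime-filter-separation argument, so the real content is verifying that the obstruction set $[-y]^\LEFT$ behaves well enough to push the argument through. Concretely, I would consider the family $\mathcal{K}$ of all filters $w$ with $k \subseteq w$ and $w \cap [-y]^\LEFT = \emptyset$, ordered by inclusion. This family is nonempty (it contains $k$) and closed under unions of chains, since a union of a chain of filters is a filter and disjointness from a fixed set is preserved under directed unions. By Zorn's Lemma $\mathcal{K}$ has a maximal element $u$; it remains to show $u$ is prime, i.e. that $a \vee b \in u$ implies $a \in u$ or $b \in u$.

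The key step is the primeness argument, and here is where the hypotheses on $[-y]^\LEFT$ must be used. Suppose for contradiction that $a \vee b \in u$ but $a \notin u$ and $b \notin u$. Then the filters $[u \cup \{a\}\rangle$ and $[u \cup \{b\}\rangle$ both strictly contain $u$, so by maximality each meets $[-y]^\LEFT$: there are $c, d \in u$ and $p, q \notin y$ with $c \wedge a \leq p^\LEFT$ and $d \wedge b \leq q^\LEFT$. Setting $e = c \wedge d \in u$, I get $e \wedge a \leq p^\LEFT$ and $e \wedge b \leq q^\LEFT$, whence
\[
 e \wedge (a \vee b) \;\leq\; (e \wedge a) \vee (e \wedge b) \;\leq\; p^\LEFT \vee q^\LEFT \;\leq\; (p \vee q)^\LEFT,
\]
using distributivity of the Heyting algebra and the fact that $^\LEFT$, as the lower adjoint of the Galois connection $(^\RIGHT,^\LEFT)$, preserves joins (indeed (gc1) gives $p^\LEFT \vee q^\LEFT = (p \vee q)^\LEFT$). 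Since $e \wedge (a\vee b) \in u$, this shows $(p \vee q)^\LEFT \in u$, i.e. $(p\vee q)^\LEFT \in u \cap [-y]^\LEFT$ provided $p \vee q \notin y$. But $y$ is prime and $p \notin y$, $q \notin y$, so indeed $p \vee q \notin y$, giving $(p\vee q)^\LEFT \in u \cap [-y]^\LEFT$, contradicting $u \in \mathcal{K}$. Hence $u$ is prime.

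I expect the main obstacle to be bookkeeping about which adjoint of which Galois connection preserves joins: the statement is about $^\LEFT$, and one must recall from Subsection~\ref{SubSec:AlgDefinitions} that in an H2GC-algebra $^\LEFT$ is the \emph{upper} component of the Galois connection $(^\RIGHT, ^\LEFT)$ in the notation of the paper, yet the displayed identity (gc1) asserts $\psi(a \wedge b) = \psi(a) \wedge \psi(b)$ for the \emph{second} coordinate and $\varphi(a \vee b) = \varphi(a)\vee\varphi(b)$ for the \emph{first}; so one has to check carefully that the join-preservation property being invoked is the correct one for the operator appearing in $[-y]^\LEFT$. (If the roles were reversed one would instead get $^\LEFT$ preserving meets, and the primeness step would fail — so this is precisely the point at which the specific algebraic structure, rather than a generic closure operator, is essential.) The hypotheses that $k$ is merely a filter and $y$ is prime are exactly what is needed: $k$ a filter for Zorn, $y$ prime for the final step $p,q \notin y \Rightarrow p \vee q \notin y$. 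Note also that although the lemma is stated for H2GC+FS-algebras, the proof as sketched uses only that $\mathbb{H}$ is a Heyting algebra and $(^\RIGHT, ^\LEFT)$ a Galois connection — the (fs) identities are not needed here, which is consistent with the lemma being an auxiliary separation result toward the Kripke-completeness of {\sf Int2GC+FS}.
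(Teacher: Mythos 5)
Your proof is, in structure, exactly the paper's proof: the same Zorn's-lemma family, the same maximal element $u$, and the same primeness argument via the two generated filters $[u \cup \{a\}\rangle$ and $[u \cup \{b\}\rangle$, distributivity, and primeness of $y$. The argument goes through.

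One correction to your justification, though, because the point you single out as "where the primeness step would fail" is in fact not an issue. In an H2GC-algebra the two Galois connections are $({^\RIGHT},{^\Left})$ and $({^\Right},{^\LEFT})$, so ${^\LEFT}$ is the \emph{upper} adjoint of the pair $({^\Right},{^\LEFT})$ and therefore preserves meets, not joins; your appeal to (gc1) for the identity $p^\LEFT \vee q^\LEFT = (p\vee q)^\LEFT$ is wrong. Fortunately your chain of inequalities only needs the one-sided estimate $p^\LEFT \vee q^\LEFT \leq (p \vee q)^\LEFT$, and that follows from mere monotonicity of ${^\LEFT}$ (which holds for either adjoint of any Galois connection): $p \leq p \vee q$ gives $p^\LEFT \leq (p\vee q)^\LEFT$ and likewise for $q$. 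This is precisely what the paper uses, writing only $c_1^\LEFT \vee d_1^\LEFT \leq (c_1 \vee d_1)^\LEFT$. So your closing worry --- that if ${^\LEFT}$ turned out to preserve meets rather than joins the argument would collapse --- is unfounded; the lemma really is a generic separation result needing only order-preservation of ${^\LEFT}$, primeness of $y$, and distributivity, and indeed none of the (fs) identities, exactly as you observe at the end.
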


\begin{proof}
Let us denote $\Gamma = \{ t \mid \text{$t$ is a filter, $k \subseteq t$, and $t \cap [-y]^\LEFT = \emptyset$}\}$. 
Clearly $\Gamma \neq \emptyset$ and, by Zorn's Lemma, $\Gamma$ has a maximal element $u$. Then, $u$ is a filter,
$k \subseteq u$, and $u \cap [-y]^\LEFT = \emptyset$.

Assume that $u$ is not a prime filter. Then there exists two elements
$a,b \in H$ such that $a \vee b \in u$, but $a,b \notin u$. By maximality of $u$,
this implies that $[u \cup \{a\} \rangle$ and $[u \cup \{b\} \rangle$ are not in $\Gamma$. 
Therefore, we must have that $[u \cup \{a\} \rangle \cap  [-y]^\LEFT \neq \emptyset$ and 
$[u \cup \{b\} \rangle \cap  [-y]^\LEFT \neq \emptyset$.
So, there exists $c,d \in   [-y]^\LEFT$ such that $c \in [u \cup \{a\} \rangle$ and
$d \in [u \cup \{b\} \rangle$. Because $u$ is a filter, this implies that there exist $e,f \in u$
such that $e \wedge a \leq c$ and $f \wedge b \leq d$. Now $e \wedge f \in u$ and 
$a \vee b \in u$ imply $(e \wedge f) \wedge (a \vee b) \in u$. Since
\[ (e \wedge f) \wedge (a \vee b) =  (e \wedge f \wedge a)  \vee  (e \wedge f \wedge b)  \leq c \vee d,\]
we obtain $c \vee d \in u$. Now the exist $c_1,d_1 \in -y$ such that $c = {c_1}^\LEFT$ and $d = {d_1}^\LEFT$.
Because $c \vee d \in u$ and $c \vee d = {c_1}^\LEFT \vee {d_1}^\LEFT \leq (c_1 \vee d_1)^\LEFT$, we have
$(c_1 \vee d_1)^\LEFT \in u$. On the other hand, $c_1,d_1 \notin y$ implies $c_1 \vee d_1 \notin y$,
because $y$ is a prime filter. Thus, $c_1 \vee d_1 \in -y$ implies $(c_1 \vee d_1)^\LEFT \in [-y]^\LEFT$.
But  $u \cap [-y]^\LEFT = \emptyset$, a contradiction. Thus, $u$ is a prime filter.
\end{proof}

Let $S$ be a non-empty subset of a lattice $L$ such that $a \vee b \in S$ implies 
$a \in S$ or $b \in S$ for all $a,b \in L$. It is easily seen that such sets $S$
can be characterised as the sets whose set-theoretical complement $-S$ is a 
$\vee$-subsemilattice of $L$. In \cite{DzJaKo10}, we proved the following lemma.

\begin{lemma} \label{Lem:Co-filter} Let $L$ be a distributive lattice.
If $x$ is a filter and $u$ is a superset of $x$ such that its
set-theoretical complement $-u$ is a $\vee$-subsemilattice of $L$,
then there exists a prime filter $z$ such that $x \subseteq z \subseteq u$.
\end{lemma}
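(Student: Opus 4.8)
The plan is to produce $z$ as a maximal element of a suitable poset of filters and then check that maximality forces primeness. First I would let $\Gamma$ be the family of all filters $t$ of $L$ with $x \subseteq t \subseteq u$. This family is non-empty because $x \in \Gamma$, and it is closed under unions of chains, since a union of a chain of filters is again a filter and it remains sandwiched between $x$ and $u$. Hence Zorn's Lemma supplies a maximal element $z \in \Gamma$, which is in particular a filter satisfying $x \subseteq z \subseteq u$.

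It then remains to show that $z$ is prime. Suppose not: there are $a,b \in L$ with $a \vee b \in z$ but $a \notin z$ and $b \notin z$. By maximality of $z$, neither $[z \cup \{a\}\rangle$ nor $[z \cup \{b\}\rangle$ lies in $\Gamma$; as both are filters containing $x$, this forces $[z \cup \{a\}\rangle \not\subseteq u$ and $[z \cup \{b\}\rangle \not\subseteq u$. So I can choose $c \in [z \cup \{a\}\rangle \setminus u$ and $d \in [z \cup \{b\}\rangle \setminus u$. Using the standard description of the generated filter together with the fact that $z$ is already closed under finite meets, there is an $e \in z$ with $e \wedge a \leq c$ and an $f \in z$ with $f \wedge b \leq d$ (the generator $a$ really must occur, for otherwise $c$ would dominate a meet of elements of $z$, hence $c \in z \subseteq u$, contrary to $c \notin u$; similarly for $b$).

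Now I would set $g = e \wedge f \in z$ and invoke distributivity of $L$: $g \wedge (a \vee b) = (g \wedge a) \vee (g \wedge b) \leq c \vee d$. Since $g \in z$ and $a \vee b \in z$, the element $g \wedge (a \vee b)$ is in $z$, and as $z$ is up-closed we conclude $c \vee d \in z \subseteq u$. On the other hand $c,d \in -u$, and $-u$ is a $\vee$-subsemilattice of $L$, so $c \vee d \in -u$, i.e.\ $c \vee d \notin u$ — a contradiction. Therefore $z$ is prime, which finishes the proof.

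The only delicate point — and the one I would flag as the crux — is the extraction of the witnesses $e,f \in z$: one must exploit that $z$ is a filter (closed under finite meets) to collapse a general finite meet of elements of $z \cup \{a\}$ down to a single $e \wedge a$, and must observe that $a$ genuinely participates (else $c \in u$, contradicting the choice $c \notin u$). Beyond that, the argument is the routine Zorn-plus-distributivity pattern shared by the classical prime-filter separation theorems, and indeed it parallels closely the reasoning in Lemma~\ref{Lem:FilterPrimefilter}.
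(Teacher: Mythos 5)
Your proof is correct and follows exactly the expected route: the paper itself does not reprove this lemma (it cites \cite{DzJaKo10} for it), but your Zorn-plus-distributivity argument, including the extraction of the witnesses $e\wedge a\leq c$, $f\wedge b\leq d$ and the use of the $\vee$-subsemilattice hypothesis on $-u$ to derive the contradiction $c\vee d\in u$ and $c\vee d\notin u$, is precisely the same technique the paper uses in its proof of Lemma~\ref{Lem:FilterPrimefilter}. No gaps.
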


Our next proposition shows that the canonical frames are {\sf Int2GC+FS}-frames.

\begin{proposition}\label{Prop:CanonicalFrameIs}
If $(\mathbb{H},{^\RIGHT},{^\Left}, {^\Right},{^\LEFT})$ is an {H2GC+FS}-algebra,
then $(X^H,\subseteq,R^H)$ is an {\sf Int2GC+FS}-frame.
\end{proposition}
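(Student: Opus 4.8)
The plan is to verify the two frame conditions \eqref{Eq:frame4} and \eqref{Eq:frame5} for the canonical relation $R^H$. Recall that $X^H$ consists of the prime filters of $H$, the order is $\subseteq$, and $(x,y) \in R^H$ iff $[x]^{\LEFT^{-1}} \subseteq y \subseteq [x]^{\RIGHT^{-1}}$. Condition \eqref{Eq:frame4} requires $(R^H \circ {\subseteq}) \subseteq ({\subseteq} \circ R^H)$: given $x \mathrel{R^H} y$ and $y \subseteq z$, I must produce a prime filter $w$ with $x \subseteq w$ and $w \mathrel{R^H} z$. Condition \eqref{Eq:frame5} is the dual: given $x \supseteq y$ and $y \mathrel{R^H} z$, produce $w$ with $y \mathrel{R^H} w$ and $w \supseteq z$, i.e. $z \subseteq w$. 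By the symmetry of the definition of $R^H$ (using $^\RIGHT,^\LEFT$ versus the equivalent description via $^\Right,^\Left$), I expect \eqref{Eq:frame5} to follow from \eqref{Eq:frame4} by a dual argument, so I would concentrate on \eqref{Eq:frame4}.

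For \eqref{Eq:frame4}, fix prime filters $x,y,z$ with $[x]^{\LEFT^{-1}} \subseteq y$, $y \subseteq [x]^{\RIGHT^{-1}}$, and $y \subseteq z$. The candidate for $w$ should be the largest filter extending $x$ whose image behaves well with respect to $z$. Concretely, I would set $k = [x \cup [z]^{\Left^{-1}} \rangle$ (the filter generated by $x$ together with $\{a : a^\Left \in z\}$) and aim to find a prime filter $w \supseteq k$ satisfying $w \subseteq [z]^{\Right^{-1}}$, i.e. $a \in w \Rightarrow a^\Right \in z$; since $[x]^{\LEFT^{-1}} \subseteq x \subseteq w$ would give the lower inclusion, and the upper inclusion is what needs to be arranged. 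The natural tool is Lemma~\ref{Lem:FilterPrimefilter} (or Lemma~\ref{Lem:Co-filter}): I would check that $k \cap [-z]^\Right = \emptyset$, which amounts to showing there is no $a \in k$ with $a^\Right \notin z$; then the lemma yields a prime filter $w$ with $k \subseteq w$ and $w \cap [-z]^\Right = \emptyset$, which is exactly $w \subseteq [z]^{\Right^{-1}}$. The disjointness $k \cap [-z]^\Right = \emptyset$ is where the Fischer Servi identities (fs1)/(fs2) enter: an element of $k$ has the form $p \wedge a$ with $p \in x$ and $a^\Left \in z$ (more precisely $p \wedge a_1 \wedge \cdots \wedge a_n$), and one uses (fs1)-type inequalities such as $(a \to b)^\RIGHT \le a^\LEFT \to b^\RIGHT$ together with $y \subseteq [x]^{\RIGHT^{-1}}$ and $y \subseteq z$ to derive a contradiction from assuming $(p \wedge a)^\Right \notin z$.

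The main obstacle will be pinning down exactly which filter to generate and getting the bookkeeping with the Galois connection units $a \le a^{\Right\LEFT}$, $a^{\RIGHT\Left} \le a$ and the (fs1)/(fs2) identities to line up so that the disjointness hypothesis of Lemma~\ref{Lem:FilterPrimefilter} genuinely holds. One must be careful that $R^H = R_1^H \cap R_2^H$: the candidate $w$ must simultaneously satisfy both $[x]^{\LEFT^{-1}} \subseteq w$ (this is automatic once $x \subseteq w$) and $w \subseteq [z]^{\RIGHT^{-1}}$, and via the dual description also $[z]^{\Left^{-1}} \subseteq w \subseteq [z]^{\Right^{-1}}$; these must be shown consistent, and it is precisely the Fischer Servi condition that forces the ``mixed'' inequalities needed to make the required filter disjoint from the relevant $\vee$-subsemilattice. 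Once \eqref{Eq:frame4} is established, \eqref{Eq:frame5} follows by the symmetric construction, generating a filter from $z$ together with $[x]^{\RIGHT}$-preimages and applying the analogue of Lemma~\ref{Lem:FilterPrimefilter} (the co-filter version, Lemma~\ref{Lem:Co-filter}) on the other side. This completes the verification that $(X^H,\subseteq,R^H)$ is an {\sf Int2GC+FS}-frame.
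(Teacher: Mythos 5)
Your skeleton is the right one and is essentially the paper's: verify \eqref{Eq:frame4} and \eqref{Eq:frame5} separately by seeding a filter with the appropriate generators, proving a disjointness/containment condition via the Fischer Servi identities, and extending to a prime filter with Lemma~\ref{Lem:FilterPrimefilter} or Lemma~\ref{Lem:Co-filter}. Your seed for \eqref{Eq:frame4}, $k=[\,x\cup[z]^{\Left^{-1}}\rangle$, is a workable variant of the paper's $k=[\,x\cup[y]^{\RIGHT}\rangle$: the paper controls the new prime filter through the image sets $[y]^{\RIGHT}$ and $[-y]^{\LEFT}$ using (fs3), whereas your version controls it through the preimage sets $[z]^{\Left^{-1}}$ and $[z]^{\Right^{-1}}$ and would run on (fs2) --- from $p\wedge a\le c$ with $p\in x$ and $a^{\Left}\in z$ one gets $p^{\Right}\le (a\to c)^{\Right}\le a^{\Left}\to c^{\Right}$, and $p^{\Right}\in y\subseteq z$ together with $a^{\Left}\in z$ then forces $c^{\Right}\in z$.

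That said, there are genuine gaps. The step you defer as ``bookkeeping to line up'' is the only place the H2GC+FS hypothesis (as opposed to plain H2GC) is used, so the proof has no content until it is carried out; for a general H2GC-algebra the relation $R^H=R_1^H\cap R_2^H$ need not satisfy \eqref{Eq:frame4} and \eqref{Eq:frame5}. More concretely, you conflate two different conditions: $k\cap[-z]^{\Right}=\emptyset$ (no element of $k$ has the form $b^{\Right}$ with $b\notin z$) is \emph{not} the same as ``there is no $a\in k$ with $a^{\Right}\notin z$'', i.e.\ $k\subseteq[z]^{\Right^{-1}}$. The former, fed into Lemma~\ref{Lem:FilterPrimefilter}, yields a prime filter $w$ with $[w]^{\Right^{-1}}\subseteq z$, which is not one of the two inclusions defining $w\,R^H\,z$; what you need is the latter, fed into Lemma~\ref{Lem:Co-filter} (the complement of $[z]^{\Right^{-1}}$ is a $\vee$-subsemilattice because $^{\Right}$ preserves joins and $z$ is prime), which yields $k\subseteq w\subseteq[z]^{\Right^{-1}}$. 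The two lemmas are not interchangeable here. A smaller slip: in your statement of \eqref{Eq:frame5} the prime filter to be produced must satisfy $x\,R^H\,w$ and $w\supseteq z$, not $y\,R^H\,w$ (with the latter you could just take $w=z$). Finally, your appeal to symmetry for \eqref{Eq:frame5} is legitimate, but only after noting that interchanging the two Galois connections swaps (fs1) with (fs2) and replaces $R^H$ by its inverse, so the swapped structure is again an H2GC+FS-algebra; the paper instead proves \eqref{Eq:frame5} directly with Lemma~\ref{Lem:Co-filter} and (fs1).
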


\begin{proof} \eqref{Eq:frame4} Assume that $x \, (R^H \circ {\subseteq}) \, y$. This implies
that there exists $z \in X_H$ such that $x \, R^H \, z$ and $z \subseteq y$.
Then, $[x]^{\LEFT^{-1}} \subseteq z \subseteq [x]^{\RIGHT^{-1}}$.
Let $k = [x \cup [y]^\RIGHT)$ be the filter generated by $x \cup [y]^\RIGHT$.
We show first that $k \cap [-y]^\LEFT = \emptyset$. Namely, if
$k \cap [-y]^\LEFT \neq \emptyset$, then there exists an element $a$
such that $a \in k$ and $a \in [-y]^\LEFT$. Since  $a \in k = [x \cup [y]^\RIGHT)$,
there are $b \in x$ (recall that $x$ is a filter) and 
$c_1,\dots,c_n \in y$ such that $b \wedge {c_1}^\RIGHT \wedge \cdots \wedge {c_n}^\RIGHT \leq a$.
Let us denote $c = c_1 \wedge \cdots \wedge c_n \in y$. Because
the map $^\RIGHT$ is order-preserving, we have 
$c^\RIGHT \leq {c_1}^\RIGHT \wedge \cdots \wedge {c_n}^\RIGHT$. This means
that $b \wedge c^\RIGHT \leq a$ and $b \leq c^\RIGHT \to a$. Now
$a = d^\LEFT$ for some $d \in -y$, and so $b \leq c^\RIGHT \to d^\LEFT \leq (c \to d)^\LEFT$
by (fs3). Since $b \in x$ and $x$ is a filter, we have $(c \to d)^\LEFT \in x$. From this
we get $c \to d \in [x]^{\LEFT^{-1}} \subseteq z \subseteq y$.
Because $c \in y$ and $y$ is a filter, also $c \wedge (c \to d) \in y$.
Since $c \wedge (c \to d) \leq d$, we have $d \in y$,
a contradiction. Hence, $k \cap [-y]^\LEFT = \emptyset$.

By Lemma~\ref{Lem:FilterPrimefilter}, there exists a prime filter $u$
such that $k \subseteq u$ and $u \cap [-y]^\LEFT = \emptyset$. So,
$x \subseteq k \subseteq u$, $[y]^\RIGHT \subseteq k \subseteq u$, 
and $y \subseteq [u]^{\RIGHT^{-1}}$.
In addition, $[u]^{\LEFT^{-1}} \subseteq y$, because if $a \in [u]^{\LEFT^{-1}}$,
then $a^\LEFT \in u$ which gives $a^\LEFT \notin [-y]^\LEFT$, because 
$u \cap [-y]^\LEFT = \emptyset$. Thus, $a \notin -y$, that is, $a \in y$.
Now $x \subseteq u$ and $u \, R^H \, y$ give $x \, ({\subseteq} \circ R^H)\, y$.

\smallskip\noindent%
\eqref{Eq:frame5} Assume $x \, ({\supseteq} \circ R^H) \, y$. Then for some  $w \in X_H$,
$x \supseteq w$ and $[w]^{\LEFT^{-1}} \subseteq y \subseteq [w]^{\RIGHT^{-1}}$. Hence,
$y \subseteq [x]^{\RIGHT^{-1}}$, because $^{\RIGHT^{-1}}$ is order-preserving.
To show that $x \, (R^H \circ {\supseteq}) \, y$, we need to find a prime filter
$z \in X_H$ such that $[x]^{\LEFT^{-1}} \subseteq z \subseteq [x]^{\RIGHT^{-1}}$
and $z \supseteq y$. Consider the filter $k = \big [y \cup [x]^{\LEFT^{-1}} \big \rangle$.
We show first that $k \subseteq [x]^{\RIGHT^{-1}}$. Assume $a \in k$.
Then, there exists $c \in y$ and $d \in [x]^{\LEFT^{-1}}$ such that 
$c \wedge d \leq a$ (note that $y$ is a filter and $[x]^{\LEFT^{-1}}$
is closed under meets). Hence, $c \leq d \to a$ and
$c^\RIGHT \leq (d \to a)^\RIGHT \leq d^\LEFT \to a^\RIGHT$ by (fs1). Since 
$c \in y \subseteq [x]^{\RIGHT^{-1}}$, we have $c^\RIGHT \in x$ and
$d^\LEFT \to a^\RIGHT \in x$. Because $d^\LEFT \in x$, we obtain 
$a^\RIGHT \in x$, that is, $a \in [x]^{\RIGHT^{-1}}$ as required.

Because $k \subseteq [x]^{\RIGHT^{-1}}$, $k$ is a filter, and $[x]^{\RIGHT^{-1}}$
is a set such that is set-theoretical complement is a $\vee$-subsemilattice of $L$, 
by Lemma~\ref{Lem:Co-filter} there exists $z \in X_H$ such that $k \subseteq z \subseteq [x]^{\RIGHT^{-1}}$.
Combining the above observations, we have $z \supseteq k \supseteq y$ and
$[x]^{\LEFT^{-1}} \subseteq k \subseteq z \subseteq [x]^{\RIGHT^{-1}}$, that is,
$x \, R^H \, z$ and $z \supseteq y$.
\end{proof}

Let $(\mathbb{H},{^\RIGHT},{^\Left}, {^\Right},{^\LEFT})$ be an H2GC+FS-algebra. 
Let $v \colon \mathrm{Var} \to H$ be a valuation. We may now define a valuation 
$v^* \colon \mathrm{Var} \to X_H$ for the canonical frame $\mathcal{F}^H = (X_H, \subseteq, R^H)$ 
by setting $x \in v^*(p)$ if and only if $v(p) \in x$ for all $p \in \mathrm{Var}$. Hence,
in the canonical model $(\mathcal{F}^H,v^*)$, we have $x \models p$ if and only if 
$v(p) \in x$ for all $x \in X_H$. 
We show that an analogous condition holds for all formulas $A$.

\begin{lemma}[\bf Key Lemma]
Let $(\mathbb{H},{^\RIGHT},{^\Left}, {^\Right},{^\LEFT})$ be an H2GC+FS-algebra
and $v \colon \mathrm{Var} \to H$ a valuation. In the canonical model $(\mathcal{F}^H,v^*)$, 
we have $x \models A$ if and only if $v(A) \in x$ for all $x \in X_H$ and $A \in \Phi$.
\end{lemma}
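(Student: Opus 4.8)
The plan is to deduce this Key Lemma from the Key Lemma for {\sf Int2GC} already recorded in Section~\ref{sSec:Frames}, rather than redo a formula induction from scratch. First I would forget the Fischer Servi identities and regard $(\mathbb{H},{^\RIGHT},{^\Left}, {^\Right},{^\LEFT})$ merely as an H2GC-algebra, forming its canonical {\sf Int2GC}-frame $\mathcal{G} = (X_H,\subseteq,R_1^H,R_2^H)$; recall from the definition of the canonical {\sf Int2GC+FS}-frame that $R^H = R_1^H \cap R_2^H$, so $R^H \subseteq R_1^H$ and $R^H \subseteq R_2^H$. The heart of the argument is the pair of relational identities
\[ R^H \circ {\supseteq} = R_1^H \qquad\text{and}\qquad {\subseteq} \circ R^H = R_2^H. \]
The ``$\subseteq$'' halves are easy: since $\supseteq$ is reflexive and $R^H \subseteq R_1^H$ we get $R^H\circ{\supseteq}\subseteq R_1^H\circ{\supseteq}\subseteq R_1^H$, the last inclusion holding because $R_1^H$ satisfies the frame condition \eqref{Eq:frame2}; symmetrically ${\subseteq}\circ R^H\subseteq{\subseteq}\circ R_2^H\subseteq R_2^H$ by reflexivity of $\subseteq$ and \eqref{Eq:frame3}.

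The reverse inclusions are exactly the work already done inside Proposition~\ref{Prop:CanonicalFrameIs}. Indeed, the verification of \eqref{Eq:frame5} there constructs, from $y\subseteq[x]^{\RIGHT^{-1}}$, a prime filter $z$ with $x\,R^H\,z$ and $z\supseteq y$, using (fs1) and Lemma~\ref{Lem:Co-filter}; this is precisely $R_1^H\subseteq R^H\circ{\supseteq}$. Likewise the verification of \eqref{Eq:frame4} constructs, from $[x]^{\LEFT^{-1}}\subseteq y$, a prime filter $u$ with $x\subseteq u$ and $u\,R^H\,y$, using (fs3) and Lemma~\ref{Lem:FilterPrimefilter}; this is $R_2^H\subseteq{\subseteq}\circ R^H$. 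With both identities in hand, the {\sf Int2GC}-frame associated with the canonical {\sf Int2GC+FS}-frame $\mathcal{F}^H=(X_H,\subseteq,R^H)$ by Lemma~\ref{Lem:FrameConnection2}, namely $(X_H,\subseteq,R^H\circ{\supseteq},{\subseteq}\circ R^H)$, is literally $\mathcal{G}$, and the canonical valuation $v^*$ is the same in both settings. Because the satisfiability clauses for $\vee,\wedge,\to,\neg$ coincide in {\sf Int2GC}- and {\sf Int2GC+FS}-models, and the clauses for $\UP,\Down$ (resp. $\Up,\DOWN$) in an {\sf Int2GC+FS}-model are phrased through $R^H\circ{\supseteq}=R_1^H$ (resp. ${\subseteq}\circ R^H=R_2^H$), the same formula induction that proves Lemma~\ref{Lem:Validity} yields $x\models A$ in $(\mathcal{F}^H,v^*)$ iff $x\models A$ in $(\mathcal{G},v^*)$, for every $A\in\Phi$ and $x\in X_H$. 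The Key Lemma for {\sf Int2GC} then gives $x\models A$ in $(\mathcal{G},v^*)$ iff $v(A)\in x$, and chaining the two equivalences finishes the proof.

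I expect the main obstacle to be the two relational identities, and specifically their nontrivial halves $R_1^H\subseteq R^H\circ{\supseteq}$ and $R_2^H\subseteq{\subseteq}\circ R^H$: this is exactly the point at which the Fischer Servi identities (fs1)/(fs3) must be combined with the prime-filter existence lemmas in order to ``fatten'' a single step of $R_1^H$ or $R_2^H$ into a genuine $R^H$-step. Since this calculation is already carried out in the proof of Proposition~\ref{Prop:CanonicalFrameIs}, the only thing one really has to check with care is that those two constructions use nothing beyond the hypotheses $y\subseteq[x]^{\RIGHT^{-1}}$ and $[x]^{\LEFT^{-1}}\subseteq y$ — not the stronger antecedents phrased there — which a short inspection of the argument confirms. (Alternatively, one could prove the Key Lemma directly by induction, handling $\UP,\Up$ by the same $k=[\{v(A)\}\cup[x]^{\LEFT^{-1}}\rangle$ construction with (fs1)/(fs2) and Lemma~\ref{Lem:Co-filter}, and $\Down,\DOWN$ by separating $v(A)^{\Left}$, resp. $v(A)^{\LEFT}$, from $x$; but the reduction above is cleaner.)
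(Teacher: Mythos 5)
Your proof is correct, but it takes a genuinely different route from the paper's. The paper proves the Key Lemma by a direct formula induction: the forward directions of the $\UP A$ and $\Down A$ cases are read off from the definitions, and the backward directions repeat, inside the induction, essentially the same prime-filter constructions that appear in Proposition~\ref{Prop:CanonicalFrameIs} (the filter $k=\big[[x]^{\LEFT^{-1}}\cup\{v(A)\}\big\rangle$ with (fs1) and Lemma~\ref{Lem:Co-filter} for $\UP A$; separating $v(A)^{\Left}$ from $x$ and then the filter $[x\cup[u]^{\Right}\rangle$ with (fs4) and Lemma~\ref{Lem:FilterPrimefilter} for $\Down A$). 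You instead factor the whole argument through the relational identities $R^H\circ{\supseteq}=R_1^H$ and ${\subseteq}\circ R^H=R_2^H$, reducing to the already-available Key Lemma for the canonical {\sf Int2GC}-frame. I checked the one point you flag as needing care: the constructions in the proofs of \eqref{Eq:frame4} and \eqref{Eq:frame5} really do use only $[x]^{\LEFT^{-1}}\subseteq y$ and $y\subseteq[x]^{\RIGHT^{-1}}$ respectively (the intermediate witnesses $z$ and $w$ there serve only to derive those inclusions), so the nontrivial halves $R_1^H\subseteq R^H\circ{\supseteq}$ and $R_2^H\subseteq{\subseteq}\circ R^H$ do follow, and the easy halves are immediate from $R^H=R_1^H\cap R_2^H$ and the frame conditions. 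Your reduction buys economy -- the Fischer Servi identities and the two filter-extension lemmas are invoked exactly once, in Proposition~\ref{Prop:CanonicalFrameIs}, rather than again inside the induction -- and it makes transparent \emph{why} the canonical {\sf Int2GC+FS}-model and the canonical {\sf Int2GC}-model satisfy the same formulas. The cost is a dependence on the {\sf Int2GC} Key Lemma, which the paper only states with a proof sketch, and on the reader verifying that the satisfiability clauses match up literally under the two identities; the paper's self-contained induction avoids both dependencies at the price of some repetition.
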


\begin{proof} 
We prove the result by formula induction. For the
connectives $\vee$, $\wedge$, $\to$, and $\neg$ the result
is well known from the theory of intuitionistic logic.
In addition, we only show the proofs for formulas $\Up A$ and $\DOWN A$,
since for $\UP A$ and $\Down A$ the proofs are analogous.

\smallskip\noindent%
(${\UP} A$) Suppose that $x \models \UP A$. This means that there exists
a prime filter $y$ such that $x \, (R^H \circ {\supseteq}) \, y$ and $y \models A$.
By the induction hypothesis, we have that $y \in v(A)$. In addition, there exists
a prime filter $u$ such that $x \, R^H \, u$ and $y \subseteq u$, that is,
$[x]^{\LEFT^{-1}} \subseteq u \subseteq [x]^{\RIGHT^{-1}}$. We obtain directly
that $v(A) \in y \subseteq u \subseteq [x]^{\RIGHT^{-1}}$, which means
that $v(A)^\RIGHT = v (\UP A) \in x$.

Conversely, suppose $v(\UP A) = v(A)^\RIGHT \in x$. Let us consider the filter
$k = \big [[x]^{\LEFT^{-1}} \cup \{v(A)\} \big \rangle$. First we show that
$k \subseteq [x]^{\RIGHT^{-1}}$. Assume that $a \in k$. Then there
exists $b \in [x]^{\LEFT^{-1}}$ such that $b \wedge v(A) \leq a$
(note that $[x]^{\LEFT^{-1}}$ is closed under finite meets). We have that
$v(A) \leq b \to a$ and $v(A)^\RIGHT \leq (b \to a)^\RIGHT \leq b^\LEFT \to a^\RIGHT$
by (fs1). This implies $b^\LEFT \to a^\RIGHT \in x$. Because $b^\LEFT \in x$,
we obtain $a^\RIGHT \in x$ and $a \in [x]^{\RIGHT^{-1}}$. Hence, 
$k \subseteq [x]^{\RIGHT^{-1}}$. Because $k$ is a filter and $[x]^{\RIGHT^{-1}}$
is a set such that is set-theoretical complement is a $\vee$-subsemilattice of $H$, 
we have by Lemma~\ref{Lem:Co-filter} that there exists a prime filter $y$ such that 
$k \subseteq y \subseteq [x]^{\RIGHT^{-1}}$. By the definition of $k$,
$[x]^{\LEFT^{-1}} \subseteq k \subseteq y$ and $v(A) \in k \subseteq y$.
We have $[x]^{\LEFT^{-1}} \subseteq y \subseteq [x]^{\RIGHT^{-1}}$,
that is, $x \, R^H \, y$. By the induction hypothesis, $y \models A$.
Since $y \supseteq y$ holds trivially, we have
$x \, (R^H \circ {\supseteq}) \, y$ implying $x \models \UP A$.

\smallskip\noindent%
(${\Down} A$) Suppose that $v(\Down A) = v(A)^\Left \in x$. 
Let $y \in X_H$. If $y \, (R^H \circ {\supseteq}) \, x$,
then there exists $z$ such that $y \, R^H \, z$ and $z \supseteq x$. 
Now $y \, R^H \, z$ is equivalent to 
$[z]^{\Left^{-1}} \subseteq y \subseteq [z]^{\Right^{-1}}$. 
Therefore, $v(A)^\Left \in x \subseteq z$ gives 
$v(A) \in [z]^{\Left^{-1}} \subseteq y$. By the induction
hypothesis, $y \models A$ and hence $x \models \Down A$.

For the other direction, assume $v(\Down A) = v(A)^\Left \notin x$,
that is, $v(A) \notin [x]^{\Left^{-1}}$. It is easy to observe
that $[x]^{\Left^{-1}}$ is a filter. Then, by the Prime Filter
Theorem of distributive lattices (see 
\cite{DzJaKo10}*{Lemma~5.4}, for instance),
there exists a prime filter $u$ such that
$v(A) \notin u$ and $[x]^{\Left^{-1}} \subseteq u$.

Let us consider the filter $k = [x \cup [u]^\Right \rangle$.
We first show that $k \cap [-u]^\Left = \emptyset$.
If $k \cap [-u]^\Left \neq \emptyset$, then there
exists $a \in k \cap [-u]^\Left$. Because
$a \in k = [x \cup [u]^\Right \rangle$, there are
$b \in x$ and $c_1,\ldots,c_n \in u$ such
that $b \wedge {c_1}^\Right \wedge \cdots \wedge {c_n}^\Right \leq a$
(recall that $x$ is a filter). Let us denote
$c = c_1 \wedge \cdots \wedge c_n \in u$.
Hence, $c^\Right \leq  {c_1}^\Right \wedge \cdots \wedge {c_n}^\Right$
and $b \wedge c^\Right \leq a$. But now $a = d^\Left$
for some $d \notin u$. So, $b \wedge c^\Right \leq d^\Left$.
This gives that $b \leq c^\Right \to d^\Left \leq (c \to d)^\Left$
by (fs4). Because $b \in x$, we have $(c \to d)^\Left \in x$.
This means that $c \to d \in [x]^{\Left^{-1}} \subseteq u$.
Now $c \in u$ implies $d \in u$, a contraction.
Therefore, $k \cap [-u]^\Left = \emptyset$.

By Lemma~\ref{Lem:FilterPrimefilter}, there exists a prime filter $y$ such that 
$k \subseteq y$ and $y \cap [-u]^\Left = \emptyset$. Since $k \subseteq y$,
we have $x \subseteq y$ and $[u]^\Right \subseteq y$ meaning 
$u \subseteq [y]^{\Right^{-1}}$. The fact that $y \cap [-u]^\Left = \emptyset$ 
implies $[y]^{\Left^{-1}} \subseteq u$, because if $a \in [y]^{\Left^{-1}}$,
then $a^\Left \in y$. This gives $a^\Left \notin [-u]^\Left$, 
$a \notin -u$, and $a \in u$. By combining our observations,
we have $[y]^{\Left^{-1}} \subseteq u \subseteq [y]^{\Right^{-1}}$,
that is, $u \, R^H \, y$ and $y \supseteq x$. 
Thus, $u \, (R^H \circ {\supseteq}) \, x$. Because $v(A) \notin u$,
we have $u \not \models A$ by the induction hypothesis.
Hence, $x \not \models \Down A$.
\end{proof}

As in case of Theorem~\ref{Thm:Completeness2}, we may prove the following
completeness result by applying the Key Lemma.

\begin{theorem}[\bf Completeness~II for Int2GC+FS] \label{Thm:Completeness3}
A formula $A$ is provable in  {\sf Int2GC+FS} if and only if $A$ is Kripke-valid.
\end{theorem}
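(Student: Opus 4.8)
The plan is to establish the two implications separately, essentially by assembling results already in place. The direction ``provable $\Rightarrow$ Kripke-valid'' is precisely Theorem~\ref{Thm:SoundnessII}, so no further work is needed there. For the converse I would argue contrapositively: assuming that $A$ is not provable in \textsf{Int2GC+FS}, I would exhibit a Kripke-model of \textsf{Int2GC+FS} in which $A$ fails, following the same pattern as the proof of Theorem~\ref{Thm:Completeness2}.

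First, by algebraic completeness (Theorem~\ref{Thm:CompletenessI}(b)), non-provability of $A$ gives an H2GC+FS-algebra $(\mathbb{H},{^\RIGHT},{^\Left},{^\Right},{^\LEFT})$ together with a valuation $v \colon \mathrm{Var}\to H$ such that $v(A)\neq 1$. Next I would form the canonical \textsf{Int2GC+FS}-frame $\mathcal{F}^H=(X_H,\subseteq,R^H)$ of this algebra; by Proposition~\ref{Prop:CanonicalFrameIs} it is an \textsf{Int2GC+FS}-frame, i.e.\ $R^H$ satisfies the frame conditions \eqref{Eq:frame4} and \eqref{Eq:frame5}. I would then equip $\mathcal{F}^H$ with the induced valuation $v^*$ defined by $x\in v^*(p)\iff v(p)\in x$; since $x\subseteq y$ and $v(p)\in x$ imply $v(p)\in y$, each $v^*(p)$ is $\subseteq$-closed in $X_H$, so $v^*$ is a legitimate valuation and $(\mathcal{F}^H,v^*)$ is an \textsf{Int2GC+FS}-model.

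Finally, since $v(A)\neq 1$ in the Heyting reduct $\mathbb{H}$, the Prime Filter Theorem for distributive lattices (separating the filter $\{1\}$ from the principal ideal $\{b \mid b\leq v(A)\}$, which are disjoint exactly because $v(A)\neq 1$) yields a prime filter $x\in X_H$ with $v(A)\notin x$. Invoking the Key Lemma, $x\models A$ in the canonical model if and only if $v(A)\in x$; hence $x\not\models A$, so $A$ is not valid in $(\mathcal{F}^H,v^*)$, hence not valid in the frame $\mathcal{F}^H$, and therefore not Kripke-valid. Taking the contrapositive completes the proof.

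I would not expect a genuine obstacle here: once Proposition~\ref{Prop:CanonicalFrameIs} (whose proof absorbs the real content, via (fs1), (fs3), (fs4) and the filter lemmas) and the Key Lemma are proved, the theorem reduces to this short assembly, exactly parallel to Theorem~\ref{Thm:Completeness2}. The one point that warrants attention is ensuring the separating prime filter $x$ produced by the Prime Filter Theorem is actually a point of the canonical frame, i.e.\ that $X_H$ really is the full set of prime filters of $H$; this is immediate from the definition of the canonical \textsf{Int2GC+FS}-frame.
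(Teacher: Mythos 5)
Your proposal is correct and follows exactly the route the paper intends: the paper's own proof is just the remark that one argues ``as in case of Theorem~\ref{Thm:Completeness2}'', i.e.\ soundness from Theorem~\ref{Thm:SoundnessII} for one direction, and for the other the combination of algebraic completeness, the canonical frame of Proposition~\ref{Prop:CanonicalFrameIs}, a prime filter omitting $v(A)$, and the Key Lemma. Your added care about the Prime Filter Theorem and the $\subseteq$-closedness of $v^*(p)$ only makes explicit what the paper leaves implicit.
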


\section{Connections to intuitionistic tense logic} \label{Sec:IKt}

Intuitionistic tense logic {\sf IK$_t$} was introduced by Ewald \cite{Ewald86} 
by extending the language of intuitionistic propositional logic with the usual
temporal expressions $FA$ ($A$ is true at some future time),
$PA$ ($A$ was true at some past time), 
$GA$ ($A$ will be true at all future times), and
$HA$ ($A$ has always been true in the past).
The Hilbert-style axiomatisation of {\sf IK$_t$} can be found in \cite{Ewald86}*{p.~171}:

\begin{tabbing}
\TABS
\>\ (1) \ All axioms of intuitionistic logic \\
\>\ (2) \ $G(A \to B) \to (GA \to GB)$ 				\>\ (2$'$) \ $H(A \to B) \to (HA \to HB)$ \\
\>\ (3) \ $G(A \wedge B) \leftrightarrow GA \wedge GB$ 		\>\ (3$'$) \ $H(A \wedge B) \leftrightarrow HA \wedge HB$ \\  
\>\ (4) \ $F(A \vee B) \leftrightarrow FA \vee FB$ 		\>\ (4$'$) \ $P(A \vee B) \leftrightarrow PA \vee PB$ \\
\>\ (5) \ $G(A \to B) \to (FA \to FB)$ 				\>\ (5$'$) \ $H(A \to B) \to (PA \to PB)$ \\ 
\>\ (6) \ $GA \wedge FB \to F(A \wedge B)$			\>\ (6$'$) \ $HA \wedge PB \to P(A \wedge B)$ \\
\>\ (7) \ $G \neg A  \to \neg FA$				\>\ (7$'$) \ $H \neg A \to \neg PA$ \\
\>\ (8) \ $FHA \to A$						\>\ (8$'$) \ $PGA \to A$ \\
\>\ (9) \ $A \to HFA$ 						\>\ (9$'$) \ $A \to GPA$ \\
\>$\!$(10) \ $(FA \to GB) \to G(A \to B)$			\>$\!$(10$'$) \ $(PA \to HB) \to H(A \to B)$ \\
\>$\!$(11) \ $F(A \to B) \to (GA \to FB)$			\>$\!$(11$'$) \ $P(A \to B) \to (HA \to PB)$ \\
\end{tabbing}
The rules of inference are modus ponens (MP), and 
\begin{tabbing}
\TABS
\>(RH) \ $\displaystyle \frac{A}{H A}$
\>
$\!$(RG) \ $\displaystyle \frac{A}{G A}$
\end{tabbing}\medskip

Our next proposition shows that if we identify $\UP$, $\DOWN$, $\Up$, $\Down$ with
$F$, $G$, $P$, $H$, respectively, then {\sf Int2GC+FS} and {\sf IK$_t$} will become  syntactically equivalent.
Recall that 
\[ \mbox{\sf Int2GC+FS} = {\sf Int2GC} + \{ {\rm (FS1)} \ {\rm or} \ {\rm (FS4)} \ {\rm or} \ {\rm (D1)} \}
+ \{ {\rm (FS2)}  \ {\rm or} \ {\rm (FS3)} \ {\rm or} \ {\rm (D2)} \}, \]
and {\sf Int2GC} is obtained by extending intuitionistic logic with rules (GC\,${\Down}{\UP}$), 
(GC\,${\UP}{\Down}$), (GC\,${\DOWN}{\Up}$), and (GC\,${\Up}{\DOWN}$).

\begin{theorem}\label{Thm:Equivalence}
$\text{\sf IK}_t = \text{\sf Int2GC+FS}$.
\end{theorem}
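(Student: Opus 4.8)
The plan is to prove $\text{\sf IK}_t = \text{\sf Int2GC+FS}$ by establishing the two inclusions $\text{\sf IK}_t \subseteq \text{\sf Int2GC+FS}$ and $\text{\sf Int2GC+FS} \subseteq \text{\sf IK}_t$ separately, under the identification of $\UP$, $\DOWN$, $\Up$, $\Down$ with $F$, $G$, $P$, $H$, respectively. In each direction it suffices to check that every axiom of the one system is a theorem of the other and that every primitive inference rule of the one system is admissible in the other; since both are Hilbert-style systems over intuitionistic propositional logic with schematic axioms, this yields that the two logics prove exactly the same formulas.

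For $\text{\sf Int2GC+FS} \subseteq \text{\sf IK}_t$, I would argue as follows. The rules (RG) and (RH) of $\text{\sf IK}_t$ coincide with the admissible rules (RN$\DOWN$) and (RN$\Down$). From (RG), (RH) and the normality axioms (2), (2$'$), (5), (5$'$) one first derives the monotonicity rules for $F$, $G$, $P$, $H$ in the standard way (e.g.\ from $C \to D$, (RG) gives $G(C \to D)$ and axiom (5) then gives $FC \to FD$). Using these monotonicity rules together with the adjunction axioms (8), (8$'$), (9), (9$'$), the four Galois-connection rules of $\text{\sf Int2GC}$ become admissible in $\text{\sf IK}_t$: for instance, from $A \to HB$, monotonicity of $F$ yields $FA \to FHB$, which composed with the instance $FHB \to B$ of (8) gives $FA \to B$, i.e.\ (GC\,${\Down}{\UP}$); the remaining three rules are symmetric, using (9), (8$'$), (9$'$). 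Hence $\text{\sf Int2GC} \subseteq \text{\sf IK}_t$. Finally, the Fischer Servi axioms (FS1), (FS2), (FS3), (FS4) are literally Ewald's axioms (11), (11$'$), (10), (10$'$), so also $\text{\sf Int2GC+FS} \subseteq \text{\sf IK}_t$.

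For $\text{\sf IK}_t \subseteq \text{\sf Int2GC+FS}$, I would run through Ewald's list. The rules (RG), (RH) are (RN$\DOWN$), (RN$\Down$), admissible in $\text{\sf Int2GC}$. Axioms (2), (3), (4) and their primed counterparts are instances of (GC5)$^\star$, (GC4)$^\star$, (GC4) (and (GC5), (GC4), (GC4)$^\star$ for the primes); axioms (8), (9) and their primed counterparts are the two halves of (GC1) and (GC1)$^\star$; and (10), (10$'$), (11), (11$'$) are (FS3), (FS4), (FS1), (FS2). The only axioms needing an actual derivation are (5), (6), (7) and their primed counterparts, and here I would invoke Corollary~\ref{Cor:Axiomatisation}, by which the Dunn axioms (D1) and (D2) are provable in $\text{\sf Int2GC+FS}$: then (6) is (D1) up to commutativity of $\wedge$; (5) follows from (D1) and monotonicity of $\UP$ using $A \wedge (A \to B) \to B$; and (7), namely $\DOWN\neg A \to \neg\UP A$, follows from (D1), the theorem $\neg\UP\bot$ of (GC3), and $A \wedge \neg A \leftrightarrow \bot$. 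The primed versions are obtained analogously from (D2) and (GC3)$^\star$. Combining the two inclusions gives the theorem.

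The verification is essentially a dictionary translating one axiomatisation into the other, so I do not expect a serious obstacle. The one point that needs care is that the $\text{\sf IK}_t$-axioms (5), (6), (7) are not consequences of the Galois-connection rules and the Fischer Servi axioms in their literal form, but do follow once one passes to the provably equivalent Dunn form (D1)/(D2) of the Fischer Servi axioms; making this available is precisely the role of Corollary~\ref{Cor:Axiomatisation}, which itself rests on Proposition~\ref{Prop:FSEquiv} and the algebraic completeness of $\text{\sf Int2GC+FS}$.
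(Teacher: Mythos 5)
Your proposal is correct and its overall architecture is the same as the paper's: both inclusions are established by translating axioms and rules under the dictionary $\UP,\DOWN,\Up,\Down \mapsto F,G,P,H$, with the monotonicity of $F,G,P,H$ extracted from (RG), (RH), (2), (2$'$), (5), (5$'$), and the four Galois rules then recovered from monotonicity together with (8), (8$'$), (9), (9$'$); the identification of (10), (10$'$), (11), (11$'$) with (FS3), (FS4), (FS1), (FS2) and of (2)--(4), (8), (9) with (GC5)$^\star$, (GC4), (GC1), etc.\ is exactly as in the paper. The one genuinely different step is how you obtain Ewald's (5), (6), (7) and their primes inside {\sf Int2GC+FS}: the paper derives (6) from (FS1) by a short purely syntactic manipulation (substituting $B := A \wedge B$ into $\UP(A\to B)\wedge\DOWN A\to\UP B$), then gets (5$'$) from (6$'$) via the Galois laws and (7) from (5) by putting $B:=\bot$, whereas you import (D1) and (D2) from Corollary~\ref{Cor:Axiomatisation}, whose justification in the paper passes through the algebraic completeness theorem. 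Your subsequent derivations of (6), (5), (7) from (D1) (using commutativity of $\wedge$, the theorem $A\wedge(A\to B)\to B$ with monotonicity of $\UP$, and $\neg\UP\bot$ with $A\wedge\neg A\leftrightarrow\bot$) are all sound, and arguably cleaner than the paper's chain through (6$'$). The trade-off is that your route makes the syntactic equivalence of the two Hilbert systems rest on a semantic detour, while the paper's version keeps the whole of Theorem~\ref{Thm:Equivalence} proof-theoretic; if one wants a purely syntactic equivalence, one should replace the appeal to Corollary~\ref{Cor:Axiomatisation} by the direct derivation of (D1) from (FS1), which is essentially what the paper does.
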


\begin{proof} 
First we show that all axioms {\sf IK$_t$} are provable in {\sf Int2GC+FS}, and all rules
of {\sf IK$_t$} are admissible in {\sf Int2GC+FS}. In this first part, let $\vdash$ denote
that a formula $A$ is provable in  {\sf Int2GC+FS}. As noted in Section~\ref{Sec:Axiomatizations},
axioms (2), (2$'$), (3), (3$'$) (4), (4$'$), (8), (8$'$), (9), (9$'$) are provable even in {\sf Int2GC}.
Additionally, rules (MP), (RH), and (RG) are admissible in {\sf Int2GC}.
Axioms (10), (10$'$), (11), (11$'$) are the Fischer Servi axioms (FS3), (FS4), (FS1), (FS2), so
they are provable in {\sf Int2GC+FS}.
                                                                                           
Axiom (FS1) is equivalent to $\UP(A \to B) \wedge \DOWN A \to \UP B$. If we set $B := A \wedge B$ in this formula,  
we have that $\vdash (\UP(A \to A \wedge B) \wedge \DOWN A ) \to \UP (A \wedge B)$. 
Because $A \to A \wedge B$ is equivalent to $A \to B$, and $\vdash B \to (A \to B)$ gives
$\vdash \UP B \to \UP(A \to B)$ by the monotonicity of $\UP$, we obtain $\vdash \UP B \wedge \DOWN A \to \UP(A \wedge B)$
and thus (6) is provable in {\sf Int2GC+FS}. Provability of (6$'$) can be shown similarly.

Because $\vdash \UP \Down (A \to B) \to (A \to B)$, we have $\vdash \UP \Down (A \to B) \wedge A \to B$
and $\vdash \Up(\UP \Down (A \to B) \wedge A)  \to \Up B$.
Let us set $A := \UP \Down(A \to B)$ and $B := A$ in axiom (6$'$) (which we just showed to be provable
in {\sf Int2GC+FS}). We obtain $\vdash \Down \UP \Down (A \to B) \wedge \Up A \to \Up(\UP \Down (A \to B) \wedge A)$. 
Thus, $\vdash \Down \UP \Down (A \to B) \wedge \Up A \to \Up B$.
Because $\vdash \Down(A \to B) \to \Down \UP \Down(A \to B)$, we have 
$\vdash \Down(A \to B) \wedge \Up A \to \Up B$. This is equivalent to
$\vdash \Down(A \to B) \to (\Up A \to \Up B )$. Hence, (5$'$) is provable in 
{\sf Int2GC+FS}, and provability of (5) can be showed in an analogous manner.
 
If we set $B := \bot$ in (5), we get $\vdash \DOWN(A \to \bot) \to (\UP A \to \UP \bot )$. 
Because $\UP \bot$ is equivalent to $\bot$, we have $\vdash \DOWN \neg A \to \neg \UP A$.
This means that (7) and (7$'$) are provable.

Because axioms (10), (10$'$), (11), (11$'$) are the Fischer Servi axioms, for the other direction
is enough to show admissibility of rules (GC\,${\Down}{\UP}$), (GC\,${\UP}{\Down}$), (GC\,${\DOWN}{\Up}$), 
(GC\,${\Up}{\DOWN}$) in {\sf IK$_t$}. First, we show admissibility of the rules of monotonicity, 
that is, if $A \to B$ is provable, then $H A \to H B$, $PA \to P B$, $G A \to G B$, and $FA \to F B$ are provable.

Here $\vdash A$ denotes that the formula $A$ is provable in {\sf IK$_t$}.
Assume  $\vdash A \to B$. By (RG), $\vdash G(A \to B)$.
Now $\vdash G A \to G B$ follows by (2), and from $\vdash G(A \to B)$,
we obtain also $\vdash FA \to FB$ by (5). Similarly, $\vdash A \to B$ implies 
$\vdash HA \to H B$ and $\vdash P A \to P B$ by applying (RH), (2$'$), and (5$'$).

Next we prove admissibility of (GC\,${\Down}{\UP}$). Assume that  $\vdash A \to H B$. 
Then,  $F A \to F H B$ by monotonicity of $F$. Because $\vdash F H B \to B$ by (8),
we obtain $\vdash F A \to B$.  Similarly, by (8$'$) and monotonicity of $P$, $A \to GB$ implies 
$PA \to B$, that is,  (GC\,${\DOWN}{\Up}$) is admissible in {\sf IK$_t$}.
Monotonicity of $H$ and axiom (9) yield $FA \to B$ implies $A \to HB$,
and monotonicity of $G$ and (9$'$) give that $PA \to B$ implies $A \to BG$. Thus, rules
(GC\,${\UP}{\Down}$) and (GC\,${\Up}{\DOWN}$) are admissible. 
\end{proof}

\begin{remark}  It is proved in \cite{JaKoKo08} that {\sf ILGC} is equivalent, with respect to provability, 
to the minimal (classical) tense logic {\sf K$_t$}, that is, {\sf ILGC} can be viewed as a simple formulation of {\sf K$_t$}. 
The same analogy applies here, because {\sf Int2GC+FS} can be seen as an alternative formulation of {\sf IK$_t$}.

It should be noted that with respect to Kripke-semantics, {\sf IK$_t$} and {\sf Int2GC+FS} are quite
different. A Kripke-frame of {\sf IK$_t$} consists of  a partially-ordered set
$(\Gamma, \leq)$ (the ``states-of-knowledge''), family of sets $T_\gamma$, where $\gamma \in \Gamma$ (times known at 
state-of-knowledge $\gamma$), such that $\gamma \leq \varphi$ implies $T_\gamma \subseteq T_\varphi$, meaning that 
advancing in knowledge retains what is known about times and their temporal ordering, and a collection of
binary relations $\mu_\gamma$ on $T_\gamma$ (the temporal ordering of $T_\gamma$ as it is 
understood at state-of-knowledge $\gamma$) \cite{Ewald86}, whereas
{\sf Int2GC+FS} is conceived as an \emph{information logic} such that its frames $(X,\leq,R)$ are
such that $X$ forms the \emph{universe of discourse}, and $\leq$ and $R$ are  relations
reflecting relationships between the objects in $X$, such as preference and indifference
of objects (see Example~\ref{Exa:Preference}).
\end{remark}

It is also obvious and well-known that the axiomatisation of Ewald is not minimal, because
several axioms can be deduced from the other axioms. We present a reduced axiomatisation,
in which the number of axioms is the half of the size of the axiomatisation in \cite{Ewald86}.

\begin{proposition}\label{Prop:MinAxs}
{\sf IK$_t$} can be axiomatised by adding {\rm (2), (2$'$), (5), (5$'$), (8), (8$'$), (9), (9$'$), (11), (11$'$)}  
to the axioms of intuitionistic logic together with rules {\rm (MP)}, {\rm (RH)}, and {\rm (RG)}. 
\end{proposition}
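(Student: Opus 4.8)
The plan is to write {\sf L} for the logic axiomatised by the intuitionistic axioms together with (2), (2$'$), (5), (5$'$), (8), (8$'$), (9), (9$'$), (11), (11$'$) and the rules (MP), (RH), (RG), and to prove {\sf L} $=$ {\sf IK$_t$} by two inclusions. The inclusion {\sf L} $\subseteq$ {\sf IK$_t$} is immediate, since every axiom and every rule of {\sf L} already belongs to Ewald's system. For the reverse inclusion I would invoke Theorem~\ref{Thm:Equivalence}: since {\sf IK$_t$} $=$ {\sf Int2GC+FS}, it is enough to show {\sf Int2GC+FS} $\subseteq$ {\sf L} under the identification of $\UP$, $\DOWN$, $\Up$, $\Down$ with $F$, $G$, $P$, $H$.

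First I would recover the monotonicity rules for $F$, $G$, $P$, $H$ inside {\sf L}: from $\vdash A \to B$ one gets $\vdash G(A \to B)$ by (RG), then $\vdash GA \to GB$ by (2) and $\vdash FA \to FB$ by (5), and dually $\vdash HA \to HB$, $\vdash PA \to PB$ by (RH), (2$'$), (5$'$). Next I would recover the four Galois-connection rules: e.g. from $\vdash A \to HB$ monotonicity of $F$ gives $\vdash FA \to FHB$ and (8) gives $\vdash FHB \to B$, hence $\vdash FA \to B$; the rules (GC\,${\DOWN}{\Up}$), (GC\,${\UP}{\Down}$), (GC\,${\Up}{\DOWN}$) follow symmetrically from (8$'$), (9), (9$'$). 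Both derivations are literally the second half of the proof of Theorem~\ref{Thm:Equivalence}, so the only thing to check is that they invoke no axiom outside {\sf L} — and indeed they use only (2), (2$'$), (5), (5$'$), (8), (8$'$), (9), (9$'$).

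With the four Galois-connection rules admissible in {\sf L}, and since {\sf L} contains all intuitionistic axioms and is closed under (MP) and substitution, {\sf L} proves everything provable in {\sf Int2GC}; note that (RH) and (RG) are exactly the rules (RN$\Down$) and (RN$\DOWN$) of {\sf Int2GC}. Finally, (11) and (11$'$) are precisely the Fischer Servi axioms (FS1) and (FS2), so {\sf L} extends {\sf Int2GC} by (FS1) and (FS2); by Proposition~\ref{Prop:FSEquiv} it then also proves (FS3) and (FS4). Hence {\sf Int2GC+FS} $=$ {\sf Int2GC} $+$ (FS1)--(FS4) $\subseteq$ {\sf L}, and therefore {\sf IK$_t$} $=$ {\sf Int2GC+FS} $\subseteq$ {\sf L}, which together with the first inclusion yields {\sf L} $=$ {\sf IK$_t$}.

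I do not expect a genuine obstacle: no new formula has to be derived, the whole argument being the observation that the proof of Theorem~\ref{Thm:Equivalence} recovers the Galois-connection and monotonicity rules from only those ten axioms. The only care required is bookkeeping — confirming that each of the ten omitted axioms (3), (3$'$), (4), (4$'$), (6), (6$'$), (7), (7$'$), (10), (10$'$) is a theorem of {\sf Int2GC+FS} (they are, respectively, (GC4), (GC4)$^\star$ of {\sf Int2GC}, the instances derived explicitly in the proof of Theorem~\ref{Thm:Equivalence}, and the Fischer Servi axioms (FS3), (FS4)), and that the substitution and necessitation conventions used for {\sf Int2GC} coincide with those of Ewald's system.
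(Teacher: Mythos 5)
Your proposal is correct and follows essentially the same route as the paper: the paper's own proof also observes that the second half of the proof of Theorem~\ref{Thm:Equivalence} derives monotonicity and the four Galois-connection rules using only (2), (2$'$), (5), (5$'$), (8), (8$'$), (9), (9$'$) with (MP), (RH), (RG), identifies (11), (11$'$) with (FS1), (FS2), and concludes via $\text{\sf IK}_t = \text{\sf Int2GC+FS}$. Your write-up merely makes explicit the two inclusions and the appeal to Proposition~\ref{Prop:FSEquiv} for (FS3), (FS4), which the paper leaves implicit; the closing ``bookkeeping'' on the omitted axioms is harmless but not actually needed once both inclusions are in place.
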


\begin{proof}
As shown in the proof of Theorem~\ref{Thm:Equivalence}, if $\UP$, $\DOWN$, $\Up$, $\Down$ are identified 
with $F$, $G$, $P$, $H$, then axioms (2), (2$'$), (5), (5$'$), (8), (8$'$), (9), (9$'$) with rules 
(MP), (RH) and (RG) are enough to show that rules (GC\,${\Down}{\UP}$), (GC\,${\UP}{\Down}$), 
(GC\,${\DOWN}{\Up}$), (GC\,${\Up}{\DOWN}$) are admissible. 
Axioms (11), (11$'$) coincide with (FS1) and (FS2), so the proof is complete,
because $\text{\sf IK}_t = \text{\sf Int2GC+FS}$.
\end{proof}

In the next proposition, we present another axiomatisation of {\sf Int2GC} using axioms of intuitionistic tense logic. 

\begin{proposition}\label{Prop:Int2GCAX}
{\sf Int2GC} can be axiomatised by adding {\rm (2), (2$'$), (8), (8$'$), (9), (9$'$)}  
to the axioms of intuitionistic logic together with rules {\rm (MP)}, {\rm (RH)}, {\rm (RG)},
and rules:
\begin{tabbing}
\TABS
\>{\rm (RMF)} \ $\displaystyle \frac{A \to B}{F A \to FB}$
\>{\rm (RMP)} \ $\displaystyle \frac{A \to B}{P A \to PB}$\\
\end{tabbing}
\end{proposition}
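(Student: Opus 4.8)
The plan is to prove that the proposed system $\mathcal{S}$ --- intuitionistic logic extended by the axiom schemata (2), (2$'$), (8), (8$'$), (9), (9$'$) together with the rules (MP), (RH), (RG), (RMF), (RMP) --- has exactly the theorems of {\sf Int2GC}, establishing the two inclusions separately. Throughout I use the identification $\UP = F$, $\DOWN = G$, $\Up = P$, $\Down = H$ from Theorem~\ref{Thm:Equivalence}.

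First I would verify that every $\mathcal{S}$-theorem is provable in {\sf Int2GC}. Under the identification, axioms (8) and (9) are precisely the formulas grouped as (GC1) in Section~\ref{Sec:Axiomatizations}, (8$'$) and (9$'$) are (GC1)$^\star$, axiom (2$'$) is (GC5), and (2) is (GC5)$^\star$; rule (RH) is the admissible rule (RN$\Down$), (RG) is (RN$\DOWN$), (RMF) is (RM$\UP$), (RMP) is (RM$\Up$), and (MP) is a primitive rule of {\sf Int2GC}. A routine induction on $\mathcal{S}$-derivations then shows that every $\mathcal{S}$-theorem is provable in {\sf Int2GC}, so in particular $\mathcal{S}$ proves nothing outside {\sf Int2GC} (and hence does not prove the Fischer Servi axioms).

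For the reverse inclusion it is enough to show that the four Galois-connection rules are admissible in $\mathcal{S}$, since $\mathcal{S}$ already contains intuitionistic logic and (MP). The only preparatory step is to derive monotonicity for $\DOWN$ and $\Down$ in $\mathcal{S}$: from $A \to B$, apply (RG) and then (2) with (MP) to obtain $\DOWN A \to \DOWN B$, and symmetrically apply (RH) and (2$'$) to obtain $\Down A \to \Down B$. With this in hand, the admissibility arguments are exactly those already carried out in the proof of Theorem~\ref{Thm:Equivalence}: (GC\,${\Down}{\UP}$) comes from $A \to \Down B$ via (RMF), giving $\UP A \to \UP \Down B$, and then axiom (8); (GC\,${\DOWN}{\Up}$) from $A \to \DOWN B$ via (RMP), giving $\Up A \to \Up \DOWN B$, and then (8$'$); (GC\,${\UP}{\Down}$) from $\UP A \to B$ via monotonicity of $\Down$, giving $\Down \UP A \to \Down B$, and then (9); and (GC\,${\Up}{\DOWN}$) from $\Up A \to B$ via monotonicity of $\DOWN$, giving $\DOWN \Up A \to \DOWN B$, and then (9$'$). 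Thus every {\sf Int2GC}-derivation is simulated in $\mathcal{S}$, so every {\sf Int2GC}-theorem is provable in $\mathcal{S}$, completing the equivalence.

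I do not anticipate a genuine obstacle: the computation recycles what was already done for Theorem~\ref{Thm:Equivalence}. The one point demanding care is that the monotonicity rules for $\DOWN$ and $\Down$, which are not primitive in $\mathcal{S}$, must first be recovered from necessitation plus the normality axioms (2), (2$'$) before being used to establish (GC\,${\UP}{\Down}$) and (GC\,${\Up}{\DOWN}$); taking the listed axioms as schemata also dispenses with any separate treatment of the substitution rule of {\sf Int2GC}.
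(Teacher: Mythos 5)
Your proposal is correct and follows essentially the same route as the paper's own proof: derive monotonicity of $G$ and $H$ from (RG), (RH), (2), (2$'$), then obtain the four Galois-connection rules from (8), (8$'$), (9), (9$'$) together with the primitive monotonicity rules for $F$ and $P$, and for the converse observe that all the listed axioms and rules are among the provable formulas (GC1), (GC1)$^\star$, (GC5), (GC5)$^\star$ and admissible rules of {\sf Int2GC}. Your write-up merely makes explicit the identifications and derivations that the paper compresses into two sentences.
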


\begin{proof} Monotonicity of $G$ and $H$ follow from (RG), (RH), (2), and (2$'$).
Because all operators are thus monotone, admissibility of rules
(GC\,${\Down}{\UP}$), (GC\,${\UP}{\Down}$), (GC\,${\DOWN}{\Up}$), (GC\,${\Up}{\DOWN}$)
follow easily from (8), (8$'$), (9), (9$'$).

On the other hand, in Section~\ref{Sec:Axiomatizations} we have noted that
axioms  {\rm (2), (2$'$), (8), (8$'$), (9), (9$'$)}  are provable in
{\sc Int2GC} and rules (RH), (RG), (RMF), (RMP) are admissible.
\end{proof}

Let us observe that {\sf Int2GC} cannot be axiomatised by using only axioms 
and rules of Ewald's system. The reason for this is that monotonicity of operators $P$ and $F$ need to be added,
since rules (RMF) and (RMP) do not belong to the system by Ewald as ``initial rules'', even they are admissible
in {\sf IK$_t$}. On the other hand, monotonicity of 
$P$ and $F$ could be obtained by adding axioms (5) and (5$'$) to the system of Proposition~\ref{Prop:Int2GCAX} 
(without monotonicity of $F$ and $P$), but then we have a logic which is too strong, since (5) and (5$'$) cannot be proved in 
{\sf Int2GC} -- this is because Galois connections $({\UP},{\Down})$ and $({\Up},{\DOWN})$ are ``independent'',
that is, operations $\UP$ and $\DOWN$  are not in anyway connected with each other. 
For instance, consider an H2GC-algebra on the three element chain $0 < u < 1$ such that
$^\RIGHT$ and $^\Left$ equal the identity mapping, and $x^\Right = 0$ and $x^\LEFT = 1$ for all $x \in \{0,u,1\}$.
Then $(1 \to u)^\LEFT = 1$, but $1^\RIGHT \to u^\RIGHT = 1 \to u = u$.
This actually means that we have an ``intermediate logic'' 
$\text{\sf Int2GC} + \{(5),(5')\}$  situated between {\sf Int2GC} and {\sf Int2GC+FS}. 
However, the study of $\text{\sf Int2GC} + \{(5),(5')\}$ is confined outside of the scope of this work.

\begin{bibdiv}
\begin{biblist}

\bib{BaDw74}{book}{
      author={Balbes, R.},
      author={Dwinger, Ph.},
       title={Distributive lattices},
   publisher={University of Missouri Press},
     address={Columbia, Missouri},
        date={1974},
}

\bib{Belo99}{article}{
      author={{B{\u e}lohl{\'a}vek}, R.},
       title={Fuzzy {G}alois connections},
        date={1999},
     journal={Mathematical Logic Quarterly},
      volume={45},
       pages={497\ndash 504},
}

\bib{Dalen01}{incollection}{
      author={Dalen, D.~van},
       title={Intuitionistic logic},
        date={2001},
   booktitle={The {B}lackwell guide to philosophical logic},
      editor={Gobble, L.},
   publisher={Blackwell},
     address={Oxford},
       pages={224\ndash 257},
}

\bib{DemOrl02}{book}{
      author={Demri, S.~P.},
      author={Or{\l}owska, E.~S.},
       title={Incomplete information: Structure, inference, complexity},
   publisher={Springer-Verlag},
     address={Berlin, Heidelberg},
        date={2002},
}

\bib{DuPr90}{article}{
      author={Dubois, D.},
      author={Prade, H.},
       title={Rough fuzzy sets and fuzzy rough sets},
        date={1990},
     journal={International Journal of General Systems},
      volume={17},
       pages={191\ndash 209},
}

\bib{Dunn94}{article}{
      author={Dunn, J.~M.},
       title={Positive modal logic},
        date={1995},
     journal={Studia Logica},
      volume={55},
       pages={301\ndash 317},
}

\bib{DzJaKo10}{article}{
      author={Dzik, W.},
      author={J{\"a}rvinen, J.},
      author={Kondo, M.},
       title={Intuitionistic propositional logic with {G}alois connections},
        date={2010},
     journal={Logic Journal of the IGPL},
      volume={18},
       pages={837\ndash 858},
}

\bib{ErKoMeSt93}{article}{
      author={Ern{\'e}, M.},
      author={Koslowski, J.},
      author={Melton, A.},
      author={Strecker, G.~E.},
       title={A primer on {Galois} connections},
        date={1993},
     journal={Annals of the New York Academy of Sciences},
      volume={704},
       pages={103\ndash 125},
}

\bib{Ewald86}{article}{
      author={Ewald, W.~B.},
       title={Intuitionistic tense and modal logic},
        date={1986},
     journal={The Journal of Symbolic Logic},
      volume={51},
       pages={166\ndash 179},
}

\bib{FishServ84}{article}{
      author={{Fischer Servi}, G.},
       title={Axiomatizations for some intuitionistic modal logics},
        date={1984},
     journal={Rendiconti del Seminario Matematico dell’ Universit{\`a}
  Politecnica di Torino},
      volume={42},
       pages={179\ndash 194},
}

\bib{GeoPop04}{article}{
      author={Georgescu, G.},
      author={Popescu, A.},
       title={Non-dual fuzzy connections},
        date={2004},
     journal={Archive for Mathematical Logic},
      volume={43},
       pages={1009\ndash 1039},
}

\bib{Gogu67}{article}{
      author={Goguen, J.~A.},
       title={L-fuzzy sets},
        date={1967},
     journal={Journal of Mathematical Analysis and Applications},
      volume={18},
       pages={145\ndash 174},
}

\bib{Grat98}{book}{
      author={Gr{\"a}tzer, G.},
       title={General lattice theory},
     edition={2nd ed.},
   publisher={Birkh{\"a}user},
     address={Basel},
        date={1998},
}

\bib{Preference}{incollection}{
      author={Hansson, S.~O.},
      author={Gr{\"u}ne-Yanoff, T.},
       title={Preferences},
        date={2009},
   booktitle={The {S}tanford encyclopedia of philosophy},
     edition={Spring 2009},
      editor={Zalta, Edward~N.},
  note={\url{http://plato.stanford.edu/archives/spr2009/entries/preferences/}},
}

\bib{JaKoKo08}{article}{
      author={J{\"a}rvinen, J.},
      author={Kondo, M.},
      author={Kortelainen, J.},
       title={Logics from {G}alois connections},
        date={2008},
     journal={International Journal of Approximate Reasoning},
      volume={49},
       pages={595\ndash 606},
}

\bib{Pawl82}{article}{
      author={Pawlak, Z.},
       title={Rough sets},
        date={1982},
     journal={International Journal of Computer and Information Sciences},
      volume={11},
       pages={341\ndash 356},
}

\bib{RasSik68}{book}{
      author={Rasiowa, H.},
      author={Sikorski, R.},
       title={The mathematics of metamathematics},
     edition={2nd ed.},
   publisher={PWN-Polish Scientific Publishers},
     address={Warsaw},
        date={1968},
}

\end{biblist}
\end{bibdiv}

\end{document}